\title{On Kawamata--Viehweg type vanishing for three dimensional Mori fiber spaces in positive characteristic}
\author{Tatsuro Kawakami}
\address{Graduate School of Mathematical Sciences, University of Tokyo, 3-8-1 Komaba, Meguro-ku, Tokyo 153-8914, Japan}
\email{kawakami@ms.u-tokyo.ac.jp}
\def\phi{\varphi}
\def\epsilon{\varepsilon}
\def\tilde{\widetilde}
\def\mapsto{\longmapsto}
\def\Spec{\operatorname{Spec}}
\def\Proj{\operatorname{Proj}}
\def\codim{\operatorname{codim}}
\def\Pic{\operatorname{Pic}}
\def\Im{\operatorname{Im}}
\def\Ker{\operatorname{Ker}}
\def\Coker{\operatorname{Coker}}
\def\rank{\operatorname{rank}}
\def\sg{\operatorname{sg}}
\def\reg{\operatorname{reg}}
\def\max{\operatorname{max}}
\def\NS{\operatorname{NS}}
\newcommand{\Q}{\mathbb{Q}}
\newcommand{\Z}{\mathbb{Z}}
\newcommand{\PP}{\mathbb{P}}
\newcommand{\sO}{\mathcal{O}}
\theoremstyle{plain}
\newtheorem{thm}{Theorem}[section] 
\newtheorem{cor}[thm]{Corollary}
\newtheorem{prop}[thm]{Proposition}
\newtheorem{lem}[thm]{Lemma}
\theoremstyle{definition} 
\newtheorem{defn}[thm]{Definition}
\newtheorem{eg}[thm]{Example} 
\theoremstyle{remark}
\newtheorem{rem}[thm]{Remark}
\newtheorem*{notation}{Notation} 
\newtheorem*{cl}{Claim}
\newtheorem*{acknowledgement}{Acknowledgments}
\keywords{Kawamata--Viehweg vanishing; Fano threefolds; Del Pezzo surfaces; Mori fiber spaces; Positive characteristic.}
\subjclass[2010]{Primary 14F17, 14J45; Secondary 14E30, 14J26}
\begin{document}
\tolerance = 9999

\maketitle
\markboth{TATSURO KAWAKAMI}{On Kawamata--Viehweg type vanishing in positive characteristic}

\begin{abstract}
In this paper, we prove a Kawamata--Viehweg type vanishing theorem for smooth Fano threefolds, canonical del Pezzo surfaces, and del Pezzo fibrations in positive characteristic.
\end{abstract}

\maketitle
\section{Introduction}
The Kawamata--Viehweg vanishing theorem is one of the most important tools in birational geometry in characteristic zero.
However, this vanishing theorem fails in positive characteristic and a lot of counterexamples
have been constructed (see, for example, \cite{Ray78}, \cite{Muk13}, \cite{CT18}, \cite{CT19}, \cite{Tot19}).
In this paper, we prove that a Kawamata--Viehweg type vanishing theorem still holds on some Mori fiber spaces in positve characteristic.

First, we discuss a Kawamata--Viehweg type vanishing theorem for a smooth Fano threefold.
Indeed, we prove the following theorem.
\begin{thm}[Theorem \ref{BVonfano} and Corollary \ref{KVVonfano}]\label{KVVonfanointro}
Let $X$ be a smooth Fano threefold over an algebraically closed field $k$ of characteristic $p>0$ and $D$ be a Cartier divisor on $X$. If $\sO_X(D)\subset\Omega_X$, then we have $\kappa(X, D)=-\infty$, where $\kappa(X, D)$ is the Iitaka dimension.
Furthermore, if $D$ is nef and $\nu(X, D)>1$, then we have $H^1(X, \sO_X(-D))=0$, where $\nu(X, D)$ is the numerical dimension. 
\end{thm}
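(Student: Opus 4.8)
The plan is to treat the two assertions separately, deducing the cohomology vanishing from the Iitaka-dimension statement together with the positive-characteristic minimal model program for threefolds. For the first assertion I would argue by contradiction: assuming $\kappa(X,D)\ge 0$, there are $m\ge 1$ and a nonzero section $s\in H^0(X,\sO_X(mD))$. The hypothesis gives an inclusion $\iota\colon\sO_X(D)\hookrightarrow\Omega_X$, and since $\iota$ is a subbundle inclusion on a dense open set, the induced map $\iota^{\otimes m}\colon\sO_X(mD)\to\Omega_X^{\otimes m}$ is a nonzero map from a line bundle to a torsion-free sheaf, hence injective. Composing with $s$ produces a nonzero element of $H^0(X,\Omega_X^{\otimes m})$. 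The contradiction then comes from the vanishing $H^0(X,\Omega_X^{\otimes m})=0$ for all $m\ge 1$ on a smooth Fano threefold: such an $X$ is covered by free rational curves $f\colon\PP^1\to X$ along which $f^*T_X$ is globally generated with positive degrees, so $f^*\Omega_X^{\otimes m}$ is a sum of line bundles of negative degree and can carry no section; as these curves dominate $X$, the global vanishing follows.

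For the second assertion I would assume $H^1(X,\sO_X(-D))\ne 0$ and extract from a nonzero class a sub-line-bundle of $\Omega_X$, using the Frobenius. Write $F$ for the absolute Frobenius and consider the exact sequence
\[ 0\to\sO_X\to F_*\sO_X\xrightarrow{d}B\to 0, \]
where $B\subset F_*\Omega_X$ is the sheaf of locally exact $1$-forms. Twisting by $\sO_X(-p^eD)$ and using the projection formula $F_*\sO_X\otimes\sO_X(-p^eD)\cong F_*\sO_X(-p^{e+1}D)$ gives
\[ 0\to\sO_X(-p^eD)\to F_*\sO_X(-p^{e+1}D)\to B\otimes\sO_X(-p^eD)\to 0. \]
The associated long exact sequence produces a connecting map $H^0(X,B\otimes\sO_X(-p^eD))\to H^1(X,\sO_X(-p^eD))$ and the Frobenius pullback $F^*\colon H^1(X,\sO_X(-p^eD))\to H^1(X,\sO_X(-p^{e+1}D))$. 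Iterating, a nonzero class $\eta$ either survives all pullbacks or, at some stage $e$, the class $(F^*)^e\eta$ lifts to $H^0(X,B\otimes\sO_X(-p^eD))$. Since $B\otimes\sO_X(-p^eD)\subset F_*\Omega_X\otimes\sO_X(-p^eD)\cong F_*(\Omega_X\otimes\sO_X(-p^{e+1}D))$, such a lift is exactly a nonzero map $\sO_X(p^{e+1}D)\to\Omega_X$, that is, an inclusion $\sO_X(p^{e+1}D)\hookrightarrow\Omega_X$.

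At this point the first assertion applies to $L:=p^{e+1}D$: the inclusion $\sO_X(L)\hookrightarrow\Omega_X$ forces $\kappa(X,L)=-\infty$, hence $\kappa(X,D)=-\infty$, and this contradicts the positivity hypothesis. If $\nu(X,D)=3$ the contradiction is immediate, since a nef divisor of maximal numerical dimension on a threefold is big and therefore has $\kappa=3$. If $\nu(X,D)=2$ I would invoke the base-point-free theorem for threefolds in positive characteristic: a nef divisor on a smooth Fano threefold is semiample, so $\kappa(X,D)=\nu(X,D)=2\ge 0$, again contradicting $\kappa(X,D)=-\infty$. Either way $H^1(X,\sO_X(-D))=0$.

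I expect the main obstacle to lie in the Frobenius iteration. Two points require care: ruling out the case in which $\eta$ survives every Frobenius pullback, so that no finite stage yields a lift into $B$, which I would address by a Frobenius-stabilization argument showing that the ascending images of the connecting maps must eventually produce a genuine section of a twist of $\Omega_X$; and ensuring the resulting map $\sO_X(p^{e+1}D)\to\Omega_X$ is nonzero after saturation, so that the first assertion genuinely applies. The secondary obstacle is the input from the positive-characteristic minimal model program needed to pass from $\nu(X,D)=2$ to $\kappa(X,D)\ge 0$; the big case $\nu(X,D)=3$ is unconditional and serves as a consistency check on the argument.
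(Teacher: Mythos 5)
Your proposal has a genuine gap, and it sits at the very center of your argument for the first assertion. You reduce everything to the vanishing $H^0(X,\Omega_X^{\otimes m})=0$, justified by claiming that a smooth Fano threefold in characteristic $p>0$ is covered by free rational curves. This is not available: bend-and-break produces rational curves in positive characteristic, and $X$ is rationally chain connected by \cite[V, 2.13 Theorem]{Kol96}, but those curves need not be free, and separable uniruledness (let alone separable rational connectedness) of smooth Fano threefolds is precisely the open problem in this setting. Moreover, even granting free curves, $f^*\Omega_X$ only has summands of degree $\le 0$, not $<0$, so your claim that $f^*\Omega_X^{\otimes m}$ has no sections requires \emph{very} free curves, i.e.\ separable rational connectedness. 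The paper's proof is structured exactly around this obstruction: it reduces to a primitive Fano threefold via Lemmas \ref{push} and \ref{fanoMMP}; when the primitive model admits a fibration to $\PP^2_k$ or $\PP^1_k\times\PP^1_k$, separable rational connectedness is known by \cite[Theorem 0.5]{GLP$^+$15} and an argument of your type (Lemma \ref{BVonSRC} (2)) does apply; but in the remaining case $\rho(X)=1$ the paper argues entirely differently, using Shepherd-Barron's vanishing (Theorem \ref{fano}) to get $H^1(X,\sO_X)=H^2(X,\sO_X)=0$ and torsion-freeness of $\Pic(X)\simeq\Z$, then the Cartier-operator sequences (3.1), (3.2) together with \cite[Proposition 4.3]{GK03} to obtain $H^0(X,\Omega_X)=0$, and finally Riemann--Roch to replace a divisor with $\kappa\ge 0$ by an effective one. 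Without a substitute for this chain, your first assertion is unproven.

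For the second assertion, your Frobenius construction is in substance the paper's Lemma \ref{BVtoKV}, but the two points you flag as ``requiring care'' are not loose ends one can expect to tighten formally. Ruling out a class that survives every Frobenius pullback is the real content of that lemma: the paper handles it by proving $H^1(X,\sO_X(-p^eD))=0$ outright for $e\gg 0$, and this needs genuine input, namely Tanaka's Kawamata--Viehweg-type vanishing for klt surfaces (Lemma \ref{KVVonsur}) applied to hyperplane sections together with a local-duality/depth computation; no ``Frobenius-stabilization'' of images of connecting maps will produce this for a divisor that is merely nef (when $\nu(X,D)=2$, the divisor $D$ is not big, so Serre-type vanishing is unavailable). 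Finally, your passage from $\nu(X,D)=2$ to $\kappa(X,D)\ge 0$ via the base-point-free theorem is not valid in all characteristics: for threefolds this theorem is known only for large $p$ (e.g.\ $p>5$, with $p=5$ recent), whereas the statement is claimed for every $p>0$. The paper instead gets $\kappa(X,D)=2$ unconditionally: since $mD-K_X$ is ample, the already-established nef-and-big case gives $h^2(X,\sO_X(mD))=h^1(X,\sO_X(-(mD-K_X)))=0$, and Riemann--Roch then forces $h^0(X,\sO_X(mD))$ to grow quadratically. (One small remark in your favor: the worry about ``saturation'' is vacuous, since any nonzero map from a line bundle to the torsion-free sheaf $\Omega_X$ on an integral scheme is automatically injective.)
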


Theorem \ref{KVVonfanointro} can be reduced to the case of the Picard rank $\rho(X)=1$ 
by choosing suitable extremal contractions.
The assertion then follows from an application of a result of Shepherd-Barron \cite[Theorem 2.1]{SB18}.
We remark that Theorem \ref{KVVonfanointro} corrects \cite[Theorem 1.4]{SB97} whose proof has a gap in the case where $p=2, 3$.

Next, we consider a Kawamata--Viehweg type vanishing theorem for a del Pezzo surface. 
Cascini--Tanaka \cite[Theorem 4.2 (6)]{CT19} constructed a canonical del Pezzo surface violating the Kawamata--Viehweg vanishing theorem in characteristic two.
Shortly afterward, inspired by their example, Bernasconi \cite[Theorem 1.1]{Ber} constructed a klt del Pezzo surface violating the Kawamata--Viehweg vanishing theorem in characteristic three.
Since his example is not canonical, the Kawamata--Viehweg vanishing theorem for canonical del Pezzo surfaces still has remained open when characteristic is bigger than or equal to three.
In this paper, we give an affirmative answer to this open problem.
More generally, we prove the following theorem.

\begin{thm}[Theorem \ref{KVVonCS}]
Let $X$ be a normal projective surface with canonical singularities over an algebraically closed field of characteristic $p\neq2$. 
Suppose that $\kappa(X)\coloneqq\kappa(\tilde{X}, K_{\tilde{X}})=-\infty$, where $\tilde{X}\to X$ is a resolution.
Then $H^1(X, \sO_X(-D))=0$ for every nef and big $\Q$-Cartier Weil divisor $D$ on $X$. 
\end{thm}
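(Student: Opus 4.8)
The plan is to reduce the statement to a logarithmic Kawamata--Viehweg vanishing on the minimal resolution, and then to deduce that vanishing from a lifting of the resolution modulo $p^2$ together with a Deligne--Illusie type degeneration, the dimension bound $\dim X=2<p$ being exactly the content of the hypothesis $p\neq 2$.

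First I would pass to the minimal resolution $\pi\colon Y=\tilde{X}\to X$. Since $X$ has canonical surface singularities, these are rational double points; in particular $X$ has rational singularities and $\pi$ is crepant, so $K_Y=\pi^*K_X$, $\pi_*\sO_Y=\sO_X$ and $R^i\pi_*\sO_Y=0$ for $i>0$. The exceptional locus $\Exc(\pi)$ is a disjoint union of ADE configurations of $(-2)$-curves, hence a simple normal crossing divisor. For the $\Q$-Cartier Weil divisor $D$ I would form $\pi^*D$, a nef and big $\Q$-divisor whose fractional part $\{\pi^*D\}$ is supported on $\Exc(\pi)$ and therefore has snc support. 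Using $-\lceil\pi^*D\rceil=\lfloor\pi^*(-D)\rfloor$ together with the standard identity $\pi_*\sO_Y(\lfloor\pi^*(-D)\rfloor)=\sO_X(-D)$ for rational singularities and the relative vanishing $R^1\pi_*\sO_Y(\lfloor\pi^*(-D)\rfloor)=0$ for resolutions of rational double points (a local, characteristic-free computation coming from the negative definiteness of the exceptional intersection form), the Leray spectral sequence gives $H^1(X,\sO_X(-D))\cong H^1(Y,\sO_Y(-\lceil\pi^*D\rceil))$. This reduces everything to a vanishing statement on the smooth surface $Y$.

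Next I would rewrite the target on $Y$ in Kawamata--Viehweg form. By Serre duality on the smooth surface $Y$,
\[
H^1\!\left(Y,\sO_Y(-\lceil\pi^*D\rceil)\right)^{\vee}\cong H^1\!\left(Y,\sO_Y(K_Y+\lceil\pi^*D\rceil)\right),
\]
so it suffices to prove that the right-hand side vanishes. Writing $N\coloneqq\pi^*D$, this is precisely the logarithmic Kawamata--Viehweg vanishing $H^{>0}(Y,\sO_Y(K_Y+\lceil N\rceil))=0$ for the nef and big $\Q$-divisor $N$ whose fractional part $\{N\}$ has snc support contained in $\Exc(\pi)$.

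Finally I would establish this positive-characteristic log vanishing by lifting modulo $p^2$. Since $\kappa(Y)=\kappa(X)=-\infty$, the surface $Y$ is rational or birationally ruled, and I would use this to lift the log smooth pair $(Y,\Exc(\pi))$, with its reduced snc boundary, to $W_2(k)$. Granting such a lift, the $E_1$-degeneration of the logarithmic de Rham complex in the style of Deligne--Illusie, valid because $\dim Y=2<p$, yields the log Kawamata--Viehweg vanishing above; this is the one and only place the hypothesis $p\neq 2$ enters, and it explains why the characteristic-two example of Cascini--Tanaka is genuinely excluded rather than merely failing to lift. The main obstacle is therefore the $W_2(k)$-liftability of the pair $(Y,\Exc(\pi))$: although rational and ruled surfaces themselves lift, one must lift the entire tower of blow-ups compatibly with the ADE configuration of exceptional curves, and keeping the snc boundary liftable throughout is the delicate point. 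I would handle this by fixing a lift of the minimal model ($\PP^2$ or a geometrically ruled surface) and of each successive blow-up centre together with the relevant curves, so that the boundary lifts along with $Y$; this log-liftability step, rather than the cohomological formalism, is where I expect the real work to lie.
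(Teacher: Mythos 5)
Your reduction to the minimal resolution $\pi\colon Y\to X$ is essentially sound (though note you only need the Leray injection $H^1(X,\sO_X(-D))\hookrightarrow H^1(Y,\sO_Y(\lfloor-\pi^*D\rfloor))$; the relative vanishing $R^1\pi_*\sO_Y(\lfloor-\pi^*D\rfloor)=0$ that you assert is not automatic, since $\lfloor-\pi^*D\rfloor$ need not be $\pi$-nef). The fatal gap is in the final step. A $W_2(k)$-lift of the pair $(Y,\Exc(\pi))$ together with Deligne--Illusie degeneration yields Kodaira/Akizuki--Nakano type vanishing, in Hara's logarithmic form, for \emph{ample} $\Q$-divisors with snc fractional part. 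But $N=\pi^*D$ is never ample: it is numerically trivial on every exceptional curve, so you genuinely need the nef-and-big case. In characteristic zero that passage uses Kodaira's lemma plus covering/perturbation tricks, and in characteristic $p$ it fails --- not for technical reasons, but because the implication ``$W_2$-liftable $\Rightarrow$ nef-and-big Kawamata--Viehweg vanishing'' is false for surfaces with $\kappa=-\infty$. Q.~Xie (\emph{Counterexamples to the Kawamata--Viehweg vanishing on ruled surfaces in positive characteristic}, J.~Algebra, 2010) constructs, in \emph{every} characteristic $p>0$, smooth ruled surfaces carrying nef and big divisors $D$ with $H^1(X,\sO_X(-D))\neq0$; such $\PP^1$-bundles over curves lift even to $W(k)$, since all the relevant deformation obstructions live in $H^2$ of a curve. (In characteristic two the rational surface of \cite{CT18} does the same.) So your strategy cannot close for any $p$, and your diagnosis that $p\neq2$ enters only through the dimension bound $\dim Y=2<p$ misplaces the difficulty: even for $p\gg0$ the ample-to-nef-and-big gap remains, and the delicate point is not the log-liftability you flag at the end.

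The paper's proof is structured precisely to avoid this: it never passes to the resolution and never uses lifting. Instead, by Lemma \ref{BVtoKV} the vanishing is reduced to showing there is no inclusion $\sO_X(p^mD)\hookrightarrow\Omega_X^{[1]}$; by Lemma \ref{push} one may run a $K_X$-MMP and assume $X$ itself is a Mori fiber space. In the fibration case this is Theorem \ref{BVonMFS}~(1), and in the del Pezzo case ($\rho(X)=1$) one restricts a putative inclusion to a general \emph{smooth} member $C$ of $|-3K_X|$ (resp.\ $|-2K_X|$, $|-K_X|$) and uses the conormal exact sequence: the hypothesis $p\neq2$ enters only through the degree estimate $\deg_C(p^mD|_C)\geq 3p^m\geq 9>6=\deg_C(\omega_C)$, which forces the composite map to $\omega_C$ to vanish and produces an inclusion $\sO_C(p^mD|_C)\hookrightarrow\sO_C(-C)$, contradicting the anti-ampleness of $\sO_C(-C)$. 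In other words, the proof exploits the del Pezzo geometry of the MMP output --- exactly the structure that your reduction to the smooth surface $Y$ discards.
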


Finally, we focus on a Kawamata--Viehweg type vanishing theorem for a del Pezzo fibration $f \colon Y \to Z$.
Patakfalvi--Waldron \cite[Theorem 1.10]{PW17} proved that $H^1(Y, \sO_Y(-D))=0$ for an ample Cartier divisor $D$ on $Y$ when $Y$ is smooth.
We generalize their result to the case where $Y$ is $\Q$-factorial and has only isolated singularities and $D$ is a nef $\Q$-Cartier Weil divisor such that the numerical dimension $\nu(X, D)$ is bigger than one.
In \cite[Theorem 1.10]{PW17}, they also showed that $H^2(Y, \sO_Y(-D))$ is related to cohomologies of non-normal fibers of $f$. Making use of their result, we deduce the vanishing of $H^2(Y, \sO_Y(-D))$ from an analysis of non-normal fibers and give an answer to \cite[Remark 1.11]{PW17}.
\begin{thm}[Corollary \ref{KVVondelPezzofib} and Theorem \ref{KVonMFS2}]
Let $f \colon Y \to Z$ be a Mori fiber space from a normal projective threefold with only isolated singularities to a smooth curve over an algebraically closed field $k$ of characteristic $p>0$ and $D$ be a nef $\Q$-Cartier Weil divisor  on $Y$ with $\nu(X, D)>1$.
Then $H^1(Y, \sO_Y(-D))=0$.
Furthermore, if $p\geq11$, $Y$ is smooth and $D$ is ample, then $H^2(Y, \sO_Y(-D))=0$.
\end{thm}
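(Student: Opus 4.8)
The plan is to pass to the fibration $f\colon Y\to Z$, run the Leray spectral sequence for $\sO_Y(-D)$, and reduce both vanishings to statements on the fibers, where the surface result (Theorem \ref{KVVonCS}) and the Patakfalvi--Waldron analysis of non-normal fibers can be brought to bear.

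I would begin with the numerical reduction showing that $D$ restricts to a nef and big divisor on a general fiber. Since $f$ is a Mori fiber space over a curve we have $\rho(Y/Z)=1$, so $N^1(Y)_{\R}$ is spanned by the class of a fiber $F$ and an $f$-ample class $A$; write $D\equiv aA+bF$. Because $\sO_Y(F)|_F\cong\sO_F$ one computes $(D|_F)^2=a^2(A|_F)^2$ with $(A|_F)^2>0$, whereas $a=0$ would force $D\equiv bF$ and hence $D^2\equiv 0$, contradicting $\nu(Y,D)>1$. Thus $a\neq 0$ and $D|_F$ is nef and big on the general fiber, which is a canonical del Pezzo surface with $\kappa=-\infty$.

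For the first assertion I would use the low-degree exact sequence
\[
0\to H^1(Z,f_*\sO_Y(-D))\to H^1(Y,\sO_Y(-D))\to H^0(Z,R^1f_*\sO_Y(-D))\to 0
\]
(valid because $\dim Z=1$). Since $D|_F$ is nef and big, $-D|_F$ is not pseudoeffective, so $h^0(F,\sO_F(-D|_F))=0$; as $\sO_Y(-D)$ is torsion-free on the integral scheme $Y$, its pushforward is a torsion-free sheaf of generic rank $0$ on the smooth curve $Z$, whence $f_*\sO_Y(-D)=0$. It then remains to prove $R^1f_*\sO_Y(-D)=0$, and as $\sO_Y(-D)$ is flat over $Z$ this follows by cohomology and base change once $H^1(F_z,\sO_Y(-D)|_{F_z})=0$ for every fiber $F_z$. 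For the general fiber this is exactly Theorem \ref{KVVonCS}. The special fibers---finitely many, among them the ones meeting the isolated singular locus of $Y$ and any non-normal fibers---must be treated by hand using the local structure of the del Pezzo fibration together with the isolated-singularity hypothesis; I would verify the vanishing there directly, consistently with the Patakfalvi--Waldron result, which locates the non-normal-fiber contribution in degree $2$, so that no restriction on $p$ is needed for $H^1$.

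For the second assertion, with $R^1f_*\sO_Y(-D)=0$ in hand the Leray sequence collapses to $H^2(Y,\sO_Y(-D))\cong H^0(Z,R^2f_*\sO_Y(-D))$, and by relative duality $R^2f_*\sO_Y(-D)$ is dual to $f_*(\omega_{Y/Z}(D))$. Following Patakfalvi--Waldron, the possible nonvanishing of this term is carried entirely by the non-normal fibers of $f$. The purpose of the hypotheses that $Y$ be smooth and $p\ge 11$ is precisely to remove this contribution: for $p\ge 11$ the geometric generic fiber of a three-dimensional del Pezzo fibration is normal, so $f$ has no non-normal fibers and the non-normal term vanishes, yielding $H^2(Y,\sO_Y(-D))=0$. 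I expect the genuinely hard part to be the control of the non-normal fibers: the normal ones are handled uniformly by Theorem \ref{KVVonCS}, while ruling out the pathological non-normal fibers is what forces, and is governed by, the bound on the characteristic.
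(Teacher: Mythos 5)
Your numerical first step (using $\rho(Y/Z)=1$, i.e.\ Lemma \ref{rho}, to get $D|_F$ nef and big on fibers) matches the paper, but both of your main reductions have genuine gaps. For the $H^1$ statement, your Leray/cohomology-and-base-change strategy requires $H^1(F_z,\sO_Y(-D)|_{F_z})=0$ for \emph{every} fiber, since $R^1f_*\sO_Y(-D)$ is a skyscraper sheaf wherever it is nonzero and therefore contributes to $H^0(Z,R^1f_*\sO_Y(-D))$ as soon as a single stalk survives. You defer exactly these fibers --- the non-normal ones and those through $Y_{\sg}$ --- ``by hand,'' but that is the hard content, not a detail; the claim that Patakfalvi--Waldron push the non-normal contribution into degree $2$ is about $H^2(Y,\cdot)$, not about $R^1f_*$, so it does not excuse you from proving $H^1$-vanishing on non-normal fibers in your setup. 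Even on the general fiber your appeal to Theorem \ref{KVVonCS} fails twice: that theorem assumes $p\neq 2$ (the statement here allows all $p>0$), and the general fiber need not be a canonical del Pezzo surface --- by \cite[Theorem 2.2]{HW81} a normal l.c.i.\ del Pezzo surface can also be a cone over an elliptic curve, which has a non-canonical singularity. The paper avoids fiberwise Kawamata--Viehweg entirely: Lemma \ref{BVtoKV} (2) (a Frobenius trick using $0\to\sO_X\to F_*\sO_X\to F_*\Omega^1_X$, plus an asymptotic Serre-type vanishing) reduces $H^1(Y,\sO_Y(-D))=0$ to the Bogomolov--Sommese-type statement $H^0(Y,(\Omega^{[1]}_Y\otimes\sO_Y(-p^eD))^{**})=0$ on the total space, and Theorem \ref{BVonMFS} (3) proves that statement by restricting a hypothetical injection $\sO_Y(p^eD)\hookrightarrow\Omega^{[1]}_Y$ to a \emph{single general} fiber and running the conormal exact sequence together with Lemma \ref{BVonSRC} on a resolution of the fiber; this is characteristic-free and never needs control of special fibers.

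For the $H^2$ statement, your key claim --- that $p\geq 11$ and smoothness of $Y$ imply $f$ has no non-normal fibers because the geometric generic fiber is normal --- is false. Normality of the generic fiber says nothing about the finitely many closed special fibers, and non-normal closed fibers of smooth del Pezzo fibrations do occur in large characteristic; handling them is precisely the question raised in \cite[Remark 1.11]{PW17} that this paper answers. The role of $p\geq 11$ is internal to \cite[Theorem 1.10]{PW17}, which (under that hypothesis) reduces $H^2(Y,\sO_Y(-A))=0$ to showing $H^1(F,\sO_F(-A|_F))=0$ for each non-normal fiber $F$; it does not exclude such fibers. The paper then proves this residual vanishing directly: every fiber is a \emph{tame} Gorenstein del Pezzo surface (Lemma \ref{tame}, using smoothness of $Y$ and flatness of $f$), the conductor curve of a non-normal such $F$ is a single $\PP^1_k$ with $(-K_F)\cdot C=1$ (Lemma \ref{nonnormal}, via Reid's classification \cite{Rei94}), whence $\Pic(F)=N^1(F)$ is torsion-free, so $A|_F\sim -aK_F$ for an integer $a>0$, and finally $H^1(F,\sO_F(aK_F))=0$ by \cite[4.10 Corollary (1)]{Rei94}. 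Without this analysis of actually-existing non-normal fibers, your argument proves nothing in the stated generality.
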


\begin{notation}
Throughout this paper, we work over an algebraically closed field $k$ of characteristic $p>0$. We say $X$ is a \textit{variety} if $X$ is an integral separated scheme of finite type over $k$.
A \textit{curve} (resp.~\textit{surface}, resp.~\textit{threefold}) is a variety of dimension one (resp.~two, resp.~three).
Given a variety $X$, we denote by $X_{\reg}$ (resp.~$X_{\sg}$) the \textit{regular} (resp.~\textit{singular}) \textit{locus} of $X$.
Given a projective variety $X$ and a coherent sheaf $\mathcal{F}$ on $X$, $h^0(X, \mathcal{F})$ denotes $\dim_{k}H^0(X, \mathcal{F})$.
Given a normal variety $X$, $\Omega_X^{[i]}$ denotes $(\Omega^i_X)^{**}$ for all $i\geq0$, where $(-)^{**}$ is the reflexive hull.
Given a normal (resp.~Gorenstein) projective variety $X$, we denote by $K_X$ a corresponding Weil divisor (resp.~Cartier divisor) to a \textit{dualizing sheaf} $\omega_X$.
Given a variety $X$, we denote by $\Pic(X)$ the \textit{Picard group} of $X$ and $\Pic(X)_{\Q}\coloneqq \Pic(X)\otimes_{\Z}\Q$.
Given a projective morphism $f\colon X\to Y$ of normal varieties and two $\Q$-Cartier $\Q$-divisors $D_1, D_2$ on $X$, we say $D_1$ and $D_2$ are \textit{numerically equivalent over} $Y$ and denote $D_1\equiv_{f}D_2$ if $D_1-D_2$ is numerically equivalent to zero on any fiber of $f$. 
We denote by $N^1(X/Y)$ the quotient of $\Pic(X)$ by its subgroup consisting of all isomorphism classes numerically equivalent to zero over $Y$. 
We denote $N^1(X/Y)_{\Q}\coloneqq N^1(X/Y)\otimes_{\Z}\Q$.
We denote by $\rho(X/Y)$ the \textit{relative Picard number}. We denote $N^1(X/\Spec\, k)$ (resp.~$\rho(X/\Spec\, k)$) by $N^1(X)$ (resp.~$\rho(X)$).
Given a projective variety $X$,
$\NS(X)$ denotes the quotient group of $\Pic(X)$ by its subgroup consisting of all isomorphism classes algebraically equivalent to zero.
We refer to \cite[Section 2.3]{KM98} for the definitions of singularities appearing in the minimal model program.
\end{notation}

\section{Preliminaries}
In this section, we summarize lemmas we need the rest of this paper.

\begin{defn}
Let $X$ be a normal projective variety and $D$ be a Cartier divisor on $X$.  
We define the \emph{Iitaka dimension}$\in\{0,1,\cdots,\dim X\}$ as follows.
If $h^0(X, \sO_X(mD)) = 0$ for all $m \in \Z_{>0}$, we say $D$ has Iitaka dimension 
$\kappa(X, D) \coloneqq -\infty$.  Otherwise, set
\[ M \coloneqq \bigl\{ m\in \Z_{>0} \;\big|\; h^0(X, \sO_X(mD)) > 0 \bigr\}, \]
and consider the natural rational mappings
\[ \phi_m : X \dasharrow \mathbb P\bigl(H^0(X, \sO_X(mD))^*\bigr) \quad \text{ for each } m \in M. \]
The Iitaka dimension of $D$ is then defined as
\[ \kappa(X, D) \coloneqq \max_{m \in M} \bigl\{ \dim \overline{\phi_m(X)} \bigr\}. \]
We say $D$ is \emph{big} if $\kappa(X, D) = \dim X$.
Note that $\kappa(X, D)=\kappa(X, nD)$ for every $n\geq 1.$ Thus we define $\kappa(X, D)$ for a $\Q$-Cartier $\Q$-divisor $D$ as $\kappa(X, nD)$,
where $n$ is any positive integer such that $nD$ is Cartier.
If a resolution $\tilde{X}\to X$ exists, then we denote $\kappa(\tilde{X}, K_{\tilde{X}})$ by $\kappa(X)$. 
\end{defn}

\begin{defn}
Let $X$ be a normal projective variety and $D$ be a nef $\Q$-Cartier $\Q$-divisor on $X$.
We define the \emph{numerical dimension}$\in\{0,1,\cdots,\dim X\}$ as follows.
If $D$ is numerically trivial, then 
we set $\nu(X, D)=0$. 
If $D$ is not numerically trivial, then
\[
\nu(X, D)\coloneqq\max\{m\in \Z_{>0}
\,|\, D^m \,\,\,
{\rm is\,\,\,not\,\,\,numerically\,\,\,trivial}\}.
\]
Note that $D$ is numerically trivial if 
and only if $\nu(X, D)=0$, and $D$ is big if and only if $\nu(X, D)=\dim\, X$. We always have the inequality $\kappa(X, D)\leq\nu(X, D)$.
\end{defn}

Let $f \colon X \dasharrow Y$ be a birational map of normal varieties. We say $f$ is a {\em birational contraction} if $f^{-1}$ does not contract any divisor. Any birational map which appears in sequences of MMPs is a birational contraction.
\begin{lem}[\textup{\cite[Lemma 2.2]{Kaw19}}]\label{push}
Let $f \colon X\dasharrow X'$ be a birational contraction of normal $\Q$-factorial projective varieties and $D$ be a $\Q$-Cartier Weil divisor on $X$.
Let $D'$ be the pushforward of $D$ by $f$. Then
$H^0(X, (\Omega^{[i]}_X\otimes \sO_X(-D))^{**})\subset H^0(X', (\Omega^{[i]}_{X'}\otimes \sO_{X'}(-D'))^{**})$ for all $i\geq0$ and $\kappa(X, D)\leq \kappa(X', D')$.
\end{lem}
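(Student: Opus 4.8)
The plan is to exploit the fact that a birational contraction is an isomorphism over a \emph{big} open subset of the target, combined with the fact that global sections of a reflexive sheaf on a normal variety are completely controlled on any such big open subset.

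First I would produce the comparison open sets. Since $f^{-1}$ contracts no divisor and $X,X'$ are normal, there should exist open subsets $U\subseteq X$ and $U'\subseteq X'$ with $f|_U\colon U\xrightarrow{\sim}U'$ an isomorphism and $\codim_{X'}(X'\setminus U')\ge 2$; note that $U$ itself need \emph{not} be big in $X$, since $f$ may contract divisors. To obtain these I would resolve $f$ by a common resolution $p\colon W\to X$, $q\colon W\to X'$ with $q=f\circ p$. As $q$ is a proper birational morphism of normal varieties, it is an isomorphism over an open $V'\subseteq X'$ whose complement has codimension $\ge 2$, and the hypothesis that $f^{-1}=p\circ q^{-1}$ contracts no divisor lets me further remove a codimension-$\ge 2$ set to land on the locus $U'$ where $f^{-1}$ is an open immersion, setting $U:=f^{-1}(U')$.

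Next comes the cohomological comparison, carried out for a fixed $i$. Write $\F:=(\Omega^{[i]}_X\otimes\sO_X(-D))^{**}$ and $\F':=(\Omega^{[i]}_{X'}\otimes\sO_{X'}(-D'))^{**}$. On the isomorphism locus the reflexive differentials match, $\Omega^{[i]}_X|_U\cong\Omega^{[i]}_{U}\cong\Omega^{[i]}_{U'}\cong\Omega^{[i]}_{X'}|_{U'}$, and since $D'=f_*D$ agrees with the transport of $D$ under $f|_U$ (any $f$-exceptional component of $D$ lies outside $U$ and pushes forward to $0$), one gets $\sO_X(-D)|_U\cong\sO_{X'}(-D')|_{U'}$, hence $\F|_U\cong\F'|_{U'}$ via $f|_U$. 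Now the restriction $H^0(X,\F)\to H^0(U,\F|_U)$ is injective because $\F$ is torsion-free and $U$ is dense, while the restriction $H^0(X',\F')\to H^0(U',\F'|_{U'})$ is an \emph{isomorphism} because $\F'$ is reflexive, $X'$ is normal, and $U'$ is big, so sections extend uniquely across the codimension-$\ge 2$ complement. Composing these two maps through the identification $\F|_U\cong\F'|_{U'}$ yields the desired inclusion $H^0(X,\F)\hookrightarrow H^0(X',\F')$.

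Finally, for the Iitaka dimension I would run the $i=0$ version of the same argument with $\sO_X(-D)$ replaced by $\sO_X(mD)$ for each $m>0$: since $f_*(mD)=mD'$, this gives compatible injections $H^0(X,\sO_X(mD))\hookrightarrow H^0(X',\sO_{X'}(mD'))$ that are simply restriction to $U\cong U'$, and which therefore assemble into an injection of graded section rings. Because the injection is restriction of sections, the Iitaka map $\phi_{mD}$ factors through $\phi_{mD'}\circ f$ followed by a linear projection, so $\dim\overline{\phi_{mD}(X)}\le\dim\overline{\phi_{mD'}(X')}$ for every $m$; taking the maximum over $m$ gives $\kappa(X,D)\le\kappa(X',D')$. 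The main obstacle I anticipate is not the cohomology transfer but the bookkeeping of the first step: verifying cleanly that the birational-contraction hypothesis forces $U'$ to be big in $X'$ (so the one-sided extension of reflexive sections on the target is legitimate), and checking that $D$ and its pushforward $D'$ genuinely restrict to the same divisor on $U\cong U'$, including the correct behaviour of $f$-exceptional components.
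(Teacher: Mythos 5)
Your proof is correct, and it is essentially the argument behind this lemma: the paper does not reprove it but cites \cite[Lemma 2.2]{Kaw19}, where the proof proceeds exactly as you do — a birational contraction restricts to an isomorphism $U\cong U'$ with $U'$ big in $X'$ (using that $f^{-1}$ contracts no divisor), torsion-freeness gives injectivity of restriction from $X$, reflexivity and normality give bijectivity of restriction from $X'$, and the $i=0$ case applied to the multiples $mD$ yields $\kappa(X,D)\le\kappa(X',D')$. The bookkeeping points you flag (bigness of $U'$, and the fact that $f$-exceptional components of $D$ miss $U$ so that $D'|_{U'}$ matches $D|_U$) are precisely the content of that standard argument and you resolve them correctly.
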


\begin{lem}[{\cite[Theorem 2.11]{Tan15}}]\label{KVVonsur}
Let $X$ be a normal projective klt surface and $D$ be a nef and big $\Q$-Cartier Weil divisor.
Let $H$ be a nef and big Cartier divisor on $X$. 
Then there exits $m_0>0$ such that $H^1(X, \sO_X(-D-mH-N))=0$ for $m\geq m_0$ and for any nef Cartier divisor $N$ on $X$.
\end{lem}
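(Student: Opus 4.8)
The plan is to convert the statement into a Kawamata--Viehweg--type assertion by Serre duality and then isolate the genuinely positive-characteristic difficulty. Since $X$ is a normal projective surface it is automatically Cohen--Macaulay (a normal surface is $S_2$, and $S_2$ forces depth $2$ at every closed point), so $\omega_X$ is dualizing and the reflexive sheaf $\sO_X(-D-mH-N)$ is itself Cohen--Macaulay. Serre duality then gives
\[
H^1(X, \sO_X(-D-mH-N)) \cong H^1\!\big(X, \sO_X(K_X+D+mH+N)\big)^{*},
\]
so it is equivalent to prove $H^1(X, \sO_X(K_X+D+mH+N))=0$ for $m\ge m_0$ and all nef Cartier $N$. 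Writing $\mathcal{F}\coloneqq \sO_X(K_X+D)$ (a fixed coherent sheaf, which absorbs the merely $\Q$-Cartier divisor $D$) and noting that $mH+N$ is Cartier, the target becomes $H^1(X,\mathcal{F}\otimes \sO_X(mH+N))=0$, uniformly in the nef divisor $N$.

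If $H$ were ample, this would be exactly Fujita's vanishing theorem, which holds in every characteristic and is uniform in the nef twist: there is an $m_0$ with $H^{i}(X,\mathcal{F}\otimes\sO_X(mH+N))=0$ for all $i>0$, all $m\ge m_0$, and all nef Cartier $N$. Thus the whole problem is to upgrade from $H$ ample to $H$ merely nef and big, and the natural way to do this is to contract the locus where $H$ is not positive. Since $H$ is nef and big on a surface, the set of integral curves $C$ with $H\cdot C=0$ is finite and, by the Hodge index theorem together with $H^2>0$, has negative definite intersection matrix; hence by Artin-type contractibility (valid in positive characteristic) there is a projective birational morphism $g\colon X\to X'$ onto a normal projective surface contracting exactly these curves and satisfying $H=g^{*}A$ for an ample $\Q$-Cartier divisor $A$ on $X'$. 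After replacing $m$ by a multiple so that $mA$ becomes Cartier and folding the bounded remainder $rH$ back into the nef twist, the projection formula and the Leray spectral sequence for $g$ reduce the computation on $X$ to one on $X'$ involving $g_{*}$ and $R^{1}g_{*}$ of $\mathcal{F}$ twisted by the nef divisor $N$ (plus the bounded remainder) and tensored with the genuinely ample $mA$; the top-degree ($H^2$) contributions are then killed by Serre vanishing on $X'$.

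The hard part is the residual term coming from $R^{1}g_{*}$ along the contracted curves. There the largeness of $m$ supplies no positivity whatsoever, since $H$ is trivial on precisely those curves, so one cannot invoke Fujita or Serre vanishing; moreover Grauert--Riemenschneider vanishing is unavailable in positive characteristic, so $R^{1}g_{*}$ of a canonical-type sheaf need not vanish. The resolution must instead exploit the negative-definiteness of the contracted configuration together with the klt hypothesis and the nef-and-bigness of $D$ itself, reducing the obstruction to a cohomological estimate on the one-dimensional exceptional curves and a verification that the relevant connecting maps in the Leray sequence vanish \emph{uniformly} in $N$. Securing this uniform control over all nef $N$, rather than for a single fixed $N$, is the crux of the argument and the step in which the positive-characteristic subtleties are concentrated; everything else is a formal combination of Serre duality and Fujita's theorem.
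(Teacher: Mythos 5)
Your opening reductions reproduce what the paper itself treats as the easy half: since a rank-one reflexive sheaf on a normal surface is Cohen--Macaulay, Serre duality (\cite[Theorem 5.71]{KM98}) converts the claim into the vanishing of $H^1(X, \sO_X(K_X+D+mH+N))$, and for \emph{ample} $H$ this is Fujita's theorem applied to the coherent sheaf $\sO_X(K_X+D)$, uniformly in the nef twist $N$. The paper then finishes in one line by citing \cite[Theorem 2.11]{Tan15}, whose content is precisely the nef-and-big case for klt surfaces in positive characteristic. This is where your proposal has a genuine gap: for nef and big $H$ you only describe what a proof ``must'' do --- control $R^1g_*$ along the contracted $H$-trivial curves, exploit negative definiteness, the klt hypothesis, and the nef-and-bigness of $D$, and check that the relevant connecting maps vanish uniformly in $N$ --- without carrying out any of it. That deferred step is not a residual verification; it is the entire theorem. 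Kawamata--Viehweg type vanishing fails in positive characteristic even on smooth rational surfaces \cite{CT18}, so no formal combination of duality, Leray, and Fujita can close the argument: the klt condition and the positivity of $D$ must enter through a substantive mechanism, which in \cite{Tan15} is the X-method, and that input is absent from your write-up.

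There is also a flaw in the one reduction you do sketch. Over an arbitrary algebraically closed field of characteristic $p$, a nef and big Cartier divisor on a projective surface need not be semiample: by Keel's theorem, semiampleness of $H$ is equivalent to semiampleness of $H$ restricted to the locus of $H$-trivial curves, and when $H$ restricts on such a curve to a degree-zero non-torsion line bundle (which can happen once $k$ is larger than $\overline{\mathbb{F}}_p$, e.g.\ for an elliptic configuration through very general points), this fails. In that case Artin contractibility only produces a contraction in the category of algebraic spaces; the target need not be projective, and $H$ does not descend as $g^{*}A$ with $A$ ample $\Q$-Cartier, since such a descent is equivalent to (a multiple of) $H$ being pulled back, i.e.\ to semiampleness near the $H$-trivial locus. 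So the dichotomy ``Serre vanishing on $X'$ plus an $R^1g_{*}$ correction'' on which your outline rests is not available in the stated generality. Both defects are exactly what the paper's citation of \cite[Theorem 2.11]{Tan15} is doing the work of repairing.
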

\begin{proof}
This is an immediate consequence of \cite[Theorem 2.11]{Tan15} and the Serre duality for Cohen--Macaulay sheaves (see \cite[Theorem 5.71]{KM98}).
\end{proof}

\begin{lem}\label{BVtoKV}
Let $X$ be a normal projective variety and $D$ be a nef $\Q$-Cartier Weil divisor with 
$\nu(X, D)>1$.
Suppose that one of the following conditions is satisfied.
\begin{enumerate}\renewcommand{\labelenumi}{$($\textup{\arabic{enumi}}$)$}
\item{$X$ is a klt surface.}
\item{$X$ is a threefold with only isolated singularities.}
\end{enumerate}
If $H^0(X, (\Omega^{[1]}_X\otimes\sO_X(-p^eD))^{**})=0$ for all $e>0$, then $H^1(X, \sO_X(-D))=0$.
\end{lem}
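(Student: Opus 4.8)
The plan is to deduce the vanishing from the Frobenius, by combining the hypothesis on global $1$-forms (which forces the Frobenius to act injectively on $H^1$) with a large-twist vanishing coming from Lemma~\ref{KVVonsur}. Write $U\coloneqq X_{\reg}$ and $Z\coloneqq X_{\sg}$, and note that in both cases $Z$ is a finite set of closed points while each $\sO_X(-p^eD)$ is reflexive, hence $S_2$. Since $S_2$ forces depth $\ge 2$ at every point of $Z$, we get $H^0_Z=H^1_Z=0$, so the restriction $H^1(X,\sO_X(-p^eD))\hookrightarrow H^1(U,\sO_U(-p^eD))$ is injective for every $e\ge 0$. I will also use repeatedly that the (iterated) Frobenius $F\colon X\to X$ is finite, so $H^i(X,F_*\F)=H^i(X,\F)$ and $F_*$ of a reflexive sheaf is again reflexive.

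First I would prove that $H^1(X,\sO_X(-D))\hookrightarrow H^1(X,\sO_X(-p^eD))$ for all $e$. On the smooth locus consider the exact sequence $0\to\sO_U\to F_*\sO_U\to B^1\to 0$, where $B^1=d(F_*\sO_U)\subset F_*\Omega^1_U$. Twisting by $\sO_U(-p^{e-1}D)$ and applying the projection formula identifies the middle term with $F_*\sO_U(-p^eD)$ and realizes the cokernel as a subsheaf of $F_*\Omega^1_U\otimes\sO_U(-p^{e-1}D)=F_*\bigl(\Omega^1_U(-p^eD)\bigr)$. Hence its $H^0$ embeds into $H^0(U,\Omega^1_U(-p^eD))=H^0(X,(\Omega^{[1]}_X\otimes\sO_X(-p^eD))^{**})$, using $\codim_X Z\ge 2$ to identify sections of the reflexive extension; this vanishes by hypothesis. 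Therefore the Frobenius map $H^1(U,\sO_U(-p^{e-1}D))\hookrightarrow H^1(U,\sO_U(-p^eD))$ is injective. The defining sheaf map extends uniquely to a map $\sO_X(-p^{e-1}D)\to F_*\sO_X(-p^eD)$ of reflexive sheaves on $X$; combining the induced map on $H^1(X,-)$ with the injections $H^1(X,-)\hookrightarrow H^1(U,-)$ above shows $H^1(X,\sO_X(-p^{e-1}D))\hookrightarrow H^1(X,\sO_X(-p^eD))$, and composing over $e$ gives the claim.

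It then remains to produce a single $e$ with $H^1(X,\sO_X(-p^eD))=0$; this is the step I expect to be the crux. In case (1), $\nu(X,D)>1$ forces $D$ to be nef and big, and I would feed $p^eD$ into Lemma~\ref{KVVonsur}. The subtlety is that $p^eD$ is only nef and big, so one cannot simply write it as $D+mH+N$ with $N$ nef for an auxiliary ample $H$: any $D$-trivial curve obstructs nefness of $N$. I would circumvent this by taking $H\coloneqq rD$, with $r$ the Cartier index of $D$, so that $p^eD\sim D+m_0H+N$ with $N=(p^e-1-m_0r)D$ automatically nef, provided $r\mid p^e-1$; after replacing $D$ by a Frobenius power (harmless by the injectivity just proved) one may assume $\gcd(p,r)=1$, and then such $e$ exist. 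In case (2), $X$ may fail to be Cohen--Macaulay, so Serre duality is unavailable; instead I would cut by a general very ample surface section $S$, which is smooth and disjoint from the isolated singularities, use $\nu(X,D)>1$ to see that $D|_S$ is nef and big on $S$, apply case (1) on $S$, and propagate the vanishing up to $X$ through Serre vanishing for the sufficiently positive twists $\sO_X(-p^eD+kA)$. Feeding the resulting vanishing into the injective chain of the previous paragraph yields $H^1(X,\sO_X(-D))=0$. The main obstacle is thus not the Frobenius mechanism but matching the merely nef-and-big class $p^eD$ to the positive form required by Lemma~\ref{KVVonsur}, together with the $p$-versus-index arithmetic and, in dimension three, the surface reduction that replaces the missing duality.
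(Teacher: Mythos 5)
Your Frobenius step and your case (1) are correct and are essentially the paper's own argument. The paper derives $H^1(X,\sO_X(-D))\hookrightarrow H^1(X,\sO_X(-pD))$ from the same sequence $0\to\sO_{X_{\reg}}\to F_{*}\sO_{X_{\reg}}\to B^1\to 0$, twisted and pushed forward to $X$, with the cokernel bounded inside $F_{*}(\Omega^{[1]}_X\otimes\sO_X(-pD))^{**}$; and its case (1) handles the Cartier-index arithmetic exactly as you do, fixing $m,n$ so that $D'\coloneqq p^m(p^n-1)D$ is Cartier and writing $p^{m+ln}D=p^mD+(1+p^n+\cdots+p^{(l-1)n})D'$ before invoking Lemma~\ref{KVVonsur} --- this is your ``replace $D$ by a Frobenius power, then choose $e$ with $p^e\equiv1$ modulo the prime-to-$p$ index'' in different packaging.

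The gap is in case (2), at the very step you flagged as the crux and then dispatched with ``Serre vanishing for the sufficiently positive twists $\sO_X(-p^eD+kA)$''. This points the wrong way. Once Lemma~\ref{KVVonsur} kills $H^1(S,\cdot)$, the restriction sequence
\[
H^1(X, \sO_X(-p^eD-(n+1)S))\to H^1(X, \sO_X(-p^eD-nS))\to H^1(S, \sO_S(-p^eD-nS))=0
\]
exhibits $H^1(X,\sO_X(-p^eD))$ as a quotient of $H^1(X,\sO_X(-p^eD-nS))$ for \emph{every} $n$, so what must be proved is vanishing for sufficiently \emph{negative} twists: $H^1(X,\sO_X(-p^eD-nS))=0$ for some large $n$. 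Serre vanishing for the positive twists $-p^eD+kA$ is true but useless: the induction cannot be run downward from there, because the connecting groups $H^0(S,\sO_S(-p^eD+kA))$ are nonzero for $k\gg0$ (the restricted divisor is big), and surjectivity of $H^0(X,\cdot)\to H^0(S,\cdot)$ is precisely the sort of statement one is trying to prove. The negative-twist vanishing is an Enriques--Severi--Zariski type statement; it is false for general coherent sheaves and holds here only because $\sO_X(-p^eD)$ is reflexive, hence $S_2$. The paper proves it by embedding $X\subset\PP^N_k$ by $H$, identifying, for $n\gg0$,
\[
H^1(X,\sO_X(-p^eD-nH))\simeq H^0\bigl(\PP_k^N, \mathcal{E}xt^{N-1}_{\PP_k^N}(\sO_X(-p^eD), \omega_{\PP_k^N}(n))\bigr),
\]
and killing the $\mathcal{E}xt$ sheaf by Auslander--Buchsbaum: depth $\geq2$ at every closed point forces projective dimension $\leq N-2$. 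This duality-on-$\PP^N$ argument --- the substitute for the Serre duality that is unavailable on the possibly non-Cohen--Macaulay $X$, and the only place where the $S_2$ property does real work beyond providing a smooth surface section --- is the idea missing from your outline; without it, case (2) does not close.
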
 
\begin{proof}
We have the exact sequence
\[
0 \to \sO_{X_{\reg}} \to F_{*}\sO_{X_{\reg}} \to F_{*}\Omega^1_{X_{\reg}}.
\]
By tensoring $\sO_{X_{\reg}}(-D)$ and taking the pushforward by the inclusion map $i \colon X_{\reg}\hookrightarrow X$, we get
\[
0 \to \sO_X(-D) \to F_{*}(\sO_X(-pD)) \to F_{*}(\Omega^{[1]}_X\otimes \sO_X(-pD))^{**}.
\]
Let $\mathcal{C}$ denote $\Coker(\sO_X(-D) \to F_{*}(\sO_X(-pD)))\subset F_{*}(\Omega^{[1]}_X\otimes \sO_X(-pD))^{**}$.
By assumption, we have
$H^0(X, \mathcal{C})\hookrightarrow H^0(X, (\Omega^{[1]}_X\otimes \sO_X(-pD))^{**})=0$.
Therefore we get an injective map
$H^1(X, \sO_X(-D))\hookrightarrow H^1(X, \sO_X(-pD))$. By repeating this, it suffices to show $H^1(X, \sO_X(-p^eD))=0$ for sufficiently large $e$. 

First, we assume $X$ is a klt surface and prove (1).
We fix $m$, $n \in \Z_{>0}$ such that $D'\coloneqq p^m(p^n-1)D$ is Cartier.
Then, for any $l \in \Z_{>0}$, we have
\[
\begin{array}{rl}
H^1(X, \sO_X(-p^{m+ln}D))=&H^1(X, \sO_X(-p^mD)\otimes \sO_X(-p^m(p^{ln}-1))D))\\
                                        =&H^1(X, \sO_X(-p^mD)\otimes \sO_X(-H_l)),\\
\end{array}
\]
where $H_l$ denotes $(1+p^n+\cdots +p^{(l-1)n})D'$.
Then, by Lemma \ref{KVVonsur}, we have $H^1(X, \sO_X(-p^mD)\otimes \sO_X(-H_l))=0$ for sufficiently large $l\gg0$. 

Next, we assume $X$ is a threefold with only isolated singularities and prove (2).
We show $H^1(X, \sO_X(-mD))=0$ for all sufficiently large $m\gg0$.
We can take a very ample divisor $H$ on $X$ such that $H$ is a smooth projective surface and $D|_H$ is a nef and big Cartier divisor. 
By Lemma \ref{KVVonsur}, there exists $m_0>0$ such that $H^1(H, \sO_H(-mD-nH))=0$ for all $m\geq m_0$ and $n\geq0$.
Then, by the exact sequence
\[
H^1(X, \sO_X(-mD-(n+1)H))\to H^1(X, \sO_X(-mD-nH))\to H^1(H, \sO_H(-mD-nH)),
\]
we can reduce the vanishing of $H^1(X, \sO_X(-mD))$ to that of $H^1(X, \sO_X(-mD-nH))$ for sufficiently large $n\gg0$.
Thus we may assume $X\subset \PP_{k}^N$ and $\sO_X(H)=\sO_{\PP_k^N}(1)|_{X}$ for some $N>0$.
By the Serre duality and \cite[III, Proposition 6.9]{Har}, we have 

\begin{align*}
H^1(X, \sO_X(-mD-nH))=&H^1(\PP_k^N, \sO_X(-mD)\otimes \sO_{\PP_k^N}(-n))\\
\simeq&\mathrm{Ext}^{N-1}(\sO_X(-mD), \omega_{\PP_{k}^N}(n))& \\
                   \simeq&H^0(\PP_{k}^N, \mathcal{E}xt^{N-1}_{\PP_{k}^N}(\sO_X(-mD), \omega_{\PP_{k}^N}(n)))\\
                   \end{align*}
for $n\gg0$.
Now, since $\sO_X(-mD)$ satisfies the Serre condition $S_2$, 
we have 
\[
\mathrm{pd}_{\sO_{\PP_{k}^N,x}}(\sO_{X}(-mD))_{x}=N-\mathrm{depth}_{\sO_{\PP_{k}^N,x}}(\sO_{X}(-mD))_{x}=N-2
\]
for each closed point $x\in X$,
where $\mathrm{pd}_{\sO_{\PP_{k}^N,x}}(\sO_{X}(-mD))_{x}$ denotes the projective dimension of $\sO_{X}(-mD)_{x}$. 
Therefore $\mathcal{E}xt^{N-1}_{\PP_{k}^N}(\sO_X(-mD), \omega_{\PP_{k}^N}(n)))=0$
and we obtain the assertion.

\end{proof}

Separably uniruled (resp.~separably rationally connected) varieties are analogs of uniruled (resp.~rationally connected) varieties in characteristic zero. If $X$ is a smooth projective surface, then $X$ is separably uniruled (resp. separably rationally connected) if and only if $\kappa(X)=-\infty$ (resp.~$X$ is rational).
We refer to \cite[Chapter IV]{Kol96} for more details.
\begin{lem}[\textup{\cite[Lemma 7]{Kol95} and \cite[Proposition 3.4]{Kaw19}}]\label{BVonSRC}
Let $X$ be a smooth projective variety and $D$ be a Cartier divisor on $X$.
Then the following hold.
\begin{enumerate}\renewcommand{\labelenumi}{$($\textup{\arabic{enumi}}$)$}
\item{If $X$ is separably uniruled and $D$ is big, then $H^0(X, \Omega_X^i \otimes \sO_X(-D))=0$ for all $i\geq0$.}
\item{If $X$ is separably rationally connected and $\kappa(X, D)\geq 0$, then $H^0(X, \Omega_X^i \otimes \sO_X(-D))=0$ for all $i>0$.}
\end{enumerate}
\end{lem}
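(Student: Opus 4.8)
The plan is to prove the two vanishing statements separately: $H^1$ in the full generality of the statement by reducing to Lemma \ref{BVtoKV}(2), and $H^2$ under the extra hypotheses by analyzing the fibers via Patakfalvi--Waldron.

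For the $H^1$ vanishing the strategy is to verify the hypothesis of Lemma \ref{BVtoKV}(2). First I would record that $\nu(Y,D)>1$ forces $D$ to be ample on a general fiber. Since $f$ is a Mori fiber space over a curve we have $\rho(Y/Z)=1$, so the nef divisor $D$ is either numerically trivial on the fibers or $f$-ample; in the former case $D$ is numerically a pullback from $Z$, whence $D^2\equiv0$ and $\nu(Y,D)\le 1$, a contradiction. Thus $D|_F$ is ample, in particular big, on a general fiber $F$, which is a del Pezzo surface because $-K_Y$ is $f$-ample. Next I would establish the key input $H^0(Y,(\Omega^{[1]}_Y\otimes\sO_Y(-p^eD))^{**})=0$ for all $e>0$. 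As $Y$ has only isolated singularities, this reflexive sheaf agrees with $\Omega^1_{Y_{\reg}}\otimes\sO_{Y_{\reg}}(-p^eD)$ on $Y_{\reg}$ and its global sections are computed there. A general fiber $F$ avoids the finitely many singular points, so $F\subset Y_{\reg}$ is a Cartier divisor with trivial normal bundle, yielding the conormal sequence
\[
0\to\sO_F\to\Omega^1_Y|_F\to\Omega^1_F\to0.
\]
Twisting by $\sO_F(-p^eD|_F)$ and taking cohomology, both outer terms have vanishing $H^0$: the left term is anti-ample, and for the right term $H^0(F,\Omega^1_F\otimes\sO_F(-p^eD|_F))=0$ by Lemma \ref{BVonSRC}(1), applied to the smooth rational (hence separably uniruled) surface $F$ with the big divisor $p^eD|_F$. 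Hence every global section of $(\Omega^{[1]}_Y\otimes\sO_Y(-p^eD))^{**}$ restricts to zero on every such fiber; since the sheaf is torsion-free and these fibers sweep out a dense open subset, the section vanishes. Lemma \ref{BVtoKV}(2) then gives $H^1(Y,\sO_Y(-D))=0$.

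The main obstacle in this half is that in characteristic $p$ the general fiber need not be smooth: an inseparable (wild) del Pezzo fibration has singular, possibly non-normal or non-reduced, general fibers, and then Lemma \ref{BVonSRC}(1), which requires smoothness, does not apply directly. To handle this I would either pass to a purely inseparable base change $Z'\to Z$ rendering the induced fibration generically smooth and compare cohomology along the resulting finite universal homeomorphism, or argue on a minimal resolution $\widetilde F\to F$ of the (normal) general fiber---itself a smooth weak del Pezzo, hence rational---and transport the Bogomolov--Sommese vanishing back to $F$ while controlling the exceptional contributions (note that Lemma \ref{push} runs in the wrong direction here, so this comparison must be made by hand). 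Making this precise is the delicate point; for a separable fibration the argument above is complete as stated.

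For the $H^2$ vanishing I would work under the extra hypotheses $p\ge 11$, $Y$ smooth and $D$ ample, where Patakfalvi--Waldron \cite[Theorem 1.10]{PW17} applies directly and expresses $H^2(Y,\sO_Y(-D))$ in terms of cohomology supported on the non-normal fibers of $f$. The plan is to show this contribution is empty: by the classification of geometrically non-normal regular del Pezzo surfaces over imperfect fields, such fibers occur only in small characteristic, so for $p\ge 11$ every fiber of $f$ is normal and the non-normal term in their formula vanishes, giving $H^2(Y,\sO_Y(-D))=0$ and answering \cite[Remark 1.11]{PW17}. The crux of this part is pinning down exactly how the bound $p\ge 11$ guarantees the absence of non-normal fibers---that is, the characteristic thresholds in the classification of wild del Pezzo fibers---which is the main obstacle for the second assertion.
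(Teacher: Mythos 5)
Your proposal does not prove the statement at hand. The statement is Lemma \ref{BVonSRC}, a Bogomolov--Sommese type vanishing: for a \emph{smooth projective} variety $X$, the vanishing of $H^0(X, \Omega_X^i \otimes \sO_X(-D))$ when $X$ is separably uniruled and $D$ is big, resp.\ separably rationally connected and $\kappa(X,D)\geq 0$. What you have written is instead a proof sketch of the del Pezzo fibration results (Corollary \ref{KVVondelPezzofib} and Theorem \ref{KVonMFS2}): vanishing of $H^1(Y,\sO_Y(-D))$ and $H^2(Y,\sO_Y(-D))$ for a three-dimensional Mori fiber space $f\colon Y\to Z$ over a curve. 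These are different theorems, and the mismatch is not repairable by reinterpretation: your argument explicitly \emph{invokes} Lemma \ref{BVonSRC}(1) on the general fiber $F$ (to kill $H^0(F,\Omega^1_F\otimes\sO_F(-p^eD|_F))$), so read as a proof of Lemma \ref{BVonSRC} it is circular, and it also passes through Lemma \ref{BVtoKV} and \cite{PW17}, none of which are relevant to an $H^0$ statement on a smooth variety with no fibration structure assumed.

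For the record, the actual proof (this is \cite[Lemma 7]{Kol95}, extended in \cite[Proposition 3.4]{Kaw19}; the paper cites it without reproving it) runs through rational curves rather than fibrations. If $X$ is separably uniruled, a general point lies on a free rational curve $g\colon \PP^1_k\to X$, so $g^*T_X$ is nef, i.e.\ $g^*\Omega^i_X$ is a direct sum of line bundles of degree $\leq 0$; since $D$ is big, writing $mD\sim A+E$ with $A$ ample and $E$ effective and moving the free curve off $E$ gives $\deg g^*D>0$, whence $g^*(\Omega^i_X\otimes\sO_X(-D))$ has all summands of negative degree and no sections. Any global section of $\Omega^i_X\otimes\sO_X(-D)$ therefore vanishes along the deformations of $g$, which sweep out a dense subset, and hence vanishes identically because the sheaf is locally free. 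For (2) one uses a very free curve, for which $g^*T_X$ is ample, so $g^*\Omega^i_X$ has strictly negative degrees for $i>0$, and $\kappa(X,D)\geq 0$ only needs to give $\deg g^*D\geq 0$ (move the curve off an effective member of $|mD|$). None of the machinery in your proposal --- $\rho(Y/Z)=1$, the conormal sequence of a fiber, inseparable base change, or the Patakfalvi--Waldron formula --- plays any role here, and the delicate points you flag (wild fibers, the bound $p\geq 11$) belong to Theorem \ref{KVonMFS2}, not to this lemma.
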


\begin{lem}\label{Cartier Vanishing}
Let $X$ be a normal projective surface such that $\kappa(X)=-\infty$ and $D$ be a nef and big Cartier divisor.
Then $H^i(X, \sO_X(-D))=0$ for $i<2$.
\end{lem}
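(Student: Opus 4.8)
The plan is to handle $H^0$ directly and to transfer the vanishing of $H^1$ to a resolution of $X$, where the hypotheses of Lemmas \ref{BVtoKV} and \ref{BVonSRC} become transparent. First, if $H^0(X,\sO_X(-D))\neq0$, then $-D\sim E$ for some effective divisor $E$, and since $D$ is nef this gives $D^2=-D\cdot E\le0$; this contradicts $D^2>0$, which holds because $D$ is nef and big on a surface. Hence $H^0(X,\sO_X(-D))=0$. For the remaining vanishing, let $\pi\colon\tilde{X}\to X$ be a resolution of singularities, which exists as $X$ is a surface. Since $X$ is normal we have $\pi_*\sO_{\tilde{X}}=\sO_X$, so by the projection formula $\pi_*\pi^*\sO_X(-D)=\sO_X(-D)$, and the five-term exact sequence of the Leray spectral sequence for $\pi$ and the line bundle $\pi^*\sO_X(-D)=\sO_{\tilde{X}}(-\pi^*D)$ produces an injection
\[
H^1(X,\sO_X(-D))\hookrightarrow H^1(\tilde{X},\sO_{\tilde{X}}(-\pi^*D)).
\]
It therefore suffices to prove $H^1(\tilde{X},\sO_{\tilde{X}}(-\pi^*D))=0$.

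On the resolution everything is in place. The surface $\tilde{X}$ is smooth, hence klt, and by definition $\kappa(\tilde{X})=\kappa(X)=-\infty$, so $\tilde{X}$ is separably uniruled. The divisor $\pi^*D$ is nef (being the pullback of a nef divisor) and big (as $(\pi^*D)^2=D^2>0$), so it is a nef $\Q$-Cartier Weil divisor with $\nu(\tilde{X},\pi^*D)=2>1$. For each $e>0$ the divisor $p^e\pi^*D$ is again big, so Lemma \ref{BVonSRC} (1) yields
\[
H^0\bigl(\tilde{X},\Omega^1_{\tilde{X}}\otimes\sO_{\tilde{X}}(-p^e\pi^*D)\bigr)=0.
\]
As $\tilde{X}$ is smooth, $(\Omega^{[1]}_{\tilde{X}}\otimes\sO_{\tilde{X}}(-p^e\pi^*D))^{**}$ is just $\Omega^1_{\tilde{X}}\otimes\sO_{\tilde{X}}(-p^e\pi^*D)$, so the hypothesis of Lemma \ref{BVtoKV} (1) holds for $\tilde{X}$ and $\pi^*D$. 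That lemma gives $H^1(\tilde{X},\sO_{\tilde{X}}(-\pi^*D))=0$, and combining this with the injection above completes the proof.

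The real content lies in the passage to the resolution. It replaces the possibly badly singular $X$ by a smooth, hence klt, surface, so that the klt case of Lemma \ref{BVtoKV} applies --- in particular its internal reduction to a large Frobenius power, which is absorbed by the klt vanishing of Lemma \ref{KVVonsur} --- and at the same time it turns reflexive differentials into ordinary ones, so that Koll\'ar's vanishing in Lemma \ref{BVonSRC} feeds in without any reflexive-hull bookkeeping. The only points needing care are that the Leray injection requires nothing beyond $\pi_*\sO_{\tilde{X}}=\sO_X$, and that nefness, bigness and hence the numerical dimension of $\pi^*D$ are inherited from $D$; the existence of a resolution of the surface $X$ in positive characteristic is classical.
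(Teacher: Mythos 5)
Your proof is correct and follows essentially the same route as the paper: the $H^0$ vanishing from bigness, the Leray injection $H^1(X,\sO_X(-D))\hookrightarrow H^1(\tilde{X},\sO_{\tilde{X}}(-\pi^*D))$ into a resolution, and then the combination of Lemma \ref{BVonSRC} (1) (separable uniruledness from $\kappa=-\infty$) with Lemma \ref{BVtoKV} (1) on the smooth model. You simply spell out the details that the paper compresses into its final sentence.
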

\begin{proof}
In the case where $i=0$, the assertion follows immediately from the bigness of $D$.
Thus we assume $i=1$.
We take a resolution $\pi\colon \tilde{X}\to X$.
Then by the spectral sequence 
\[
E_2^{p,q}=H^{p}(X,R^{q}\pi_{*}\sO_{\tilde{X}}(-\pi^{*}D))\Rightarrow E^{p+q}=H^{p+q}(\tilde{X},\sO_{\tilde{X}}(-\pi^{*}D)),
\]
we obtain an injective map $H^{1}(X,\sO_{X}(-D))\hookrightarrow H^1(\tilde{X}, \sO_{\tilde{X}}(-\pi^{*}D))$.
Then it follows from Lemmas \ref{BVtoKV} (1) and \ref{BVonSRC} (1) that $H^1(\tilde{X}, \sO_{\tilde{X}}(-\pi^{*}D))=0$.
\end{proof}
\section{Kawamata--Viehweg type vanishing for smooth Fano threefolds.}

In this section, we prove the cotangent bundle $\Omega_X$ of a smooth Fano threefold $X$ does not contain a line bundle $\sO_X(D)$ with the Iitaka dimension $\kappa(X, D)\geq0$. 
For this, we reduce to the case where the Picard rank is equal to one by choosing suitable extremal contractions.
As a corollary, we obtain the vanishing of $H^1(X, \sO_X(-D))$ for every nef Cartier divisor $D$ with $\nu(X, D)>1$.

\begin{defn}[\textup{\cite[Definition 1.3]{MM83}}]
Let $X$ be a smooth Fano threefold.
We say $X$ is \emph{primitive} if $X$ is not isomorphic to a blowing-up of a smooth Fano threefold along a smooth curve.
\end{defn}

\begin{lem}\label{fanoMMP}
Let $X$ be a smooth Fano threefold.
Then there is a sequence of birational morphisms of smooth Fano threefolds,
\[
X=:X_0 \overset{\varphi_0}{\rightarrow} X_1 \overset{\varphi_1}{\rightarrow} \cdots \overset{\varphi_{\ell-1}}{\rightarrow} X_{\ell} \overset{f}{\rightarrow} Y 
\]
such that the following properties hold. 
\begin{enumerate}\renewcommand{\labelenumi}{$($\textup{\arabic{enumi}}$)$}
\item 
For any $i \in \{0, \ldots, \ell\}$, 
$X_i$ is a smooth Fano threefold.
\item 
For any $i \in \{0, \ldots, \ell-1\}$, 
$\varphi_i:X_i \rightarrow X_{i+1}$ is a blowing up along a smooth curve. 
\item $f\colon X_{l}\to Y$ is a Mori fiber space (see Definition \ref{MFS} for the definition), where $Y$ is a point, $\PP_k^2$, or $\PP^1_k\times \PP^1_k$. A general fiber of $f$ is $\PP_k^1$ when $Y$ is $\PP_k^2$ or $\PP^1_k\times \PP^1_k$. 
\end{enumerate}
\end{lem}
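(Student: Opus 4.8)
The plan is to induct on the Picard rank $\rho(X)$ via the notion of primitivity. If $X$ is not primitive, then by definition (\cite[Definition 1.3]{MM83}) there is a smooth Fano threefold $X_1$ and a smooth curve $C \subset X_1$ with $\varphi_0 \colon X = X_0 \to X_1$ the blow-up of $X_1$ along $C$; in particular $\rho(X_1) = \rho(X) - 1$ and $X_1$ is again a smooth Fano threefold. Taking $\varphi_0$ as the first morphism and recursing on $X_1$, I obtain a chain of blow-downs of smooth curves between smooth Fano threefolds along which $\rho$ strictly decreases. Since $\rho \geq 1$ always, the process terminates at a primitive smooth Fano threefold $X_\ell$, and this already records properties (1) and (2).

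It then remains to equip the primitive threefold $X_\ell$ with the Mori fiber space of property (3). If $\rho(X_\ell) = 1$, then $X_\ell \to \Spec k$ is a Mori fiber space over a point. If $\rho(X_\ell) \geq 2$, I would apply the cone and contraction theorems for smooth Fano threefolds together with Mori's classification of extremal contractions on smooth threefolds (in the form valid over an arbitrary algebraically closed field), and use primitivity to exclude rays of blow-up-of-a-smooth-curve type. The aim is to exhibit a fiber-type extremal ray whose contraction $f \colon X_\ell \to Y$ has $Y$ equal to a point, $\PP^2_k$, or $\PP^1_k \times \PP^1_k$. When $\dim Y = 2$ the contraction is a relatively minimal conic bundle, so $\rho(Y) = \rho(X_\ell) - 1$ and $Y$ is a smooth rational surface; Picard rank one then forces $Y = \PP^2_k$, while Picard rank two should give $Y = \PP^1_k \times \PP^1_k$.

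The step I expect to be the main obstacle is exactly the identification of $Y$: I must guarantee that $X_\ell$ carries a conic-bundle contraction with base $\PP^2_k$ or $\PP^1_k \times \PP^1_k$, rather than only a del Pezzo fibration over $\PP^1_k$ or a conic bundle over some Hirzebruch surface $\mathbb{F}_n$ with $n \geq 1$. Concretely, one wants always to prefer a conic-bundle ray on the primitive model (for instance presenting $\PP^1_k \times \PP^2_k$ as a conic bundle over $\PP^2_k$ rather than as a del Pezzo fibration over $\PP^1_k$), to rule out the $\mathbb{F}_n$ bases, and to bound $\rho(X_\ell) \leq 3$; all of this rests on the Mori--Mukai structure theory of primitive Fano threefolds. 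Finally, since we work in characteristic $p > 0$, the most delicate task is to verify that this structure theory persists in positive characteristic, including the classification of the admissible bases and the behaviour of the conic bundles for small $p$, where a generic fiber could a priori degenerate or become inseparable.
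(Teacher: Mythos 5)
Your reduction to a primitive Fano threefold is exactly the paper's first step, and your overall plan---exhibit a fiber-type contraction on the primitive model and identify its base---is the paper's strategy. However, everything you defer to ``the Mori--Mukai structure theory'' and flag as the main obstacle is precisely where the content of the lemma lies, and your proposal does not supply it. The paper fills these gaps with specific inputs: (i) by \cite[section 8 (8.1),(8.2)]{MM83}, a primitive smooth Fano threefold either has $\rho(X)=1$ or admits an extremal contraction $f\colon X\to Y$ onto a smooth projective surface, and this argument is characteristic-free because the extremal contraction theorem for smooth threefolds holds in all characteristics by \cite[(1.1) Main Theorem]{Kol91}; (ii) the base $Y$ is a smooth rational surface because $X$ is rationally chain connected by \cite[V, 2.13 Theorem]{Kol96}, hence so is $Y$, and \cite[Lemma 2.4]{Sai03} gives $\kappa(Y)=-\infty$; (iii) $Y$ contains no curve of negative self-intersection, by an argument similar to \cite[Proposition 6.6]{MM83} (whose ingredient \cite[Proposition 4.5]{MM83} is valid in all characteristics, cf.~\cite[Proposition 2.3]{MM86}), and it is this step that rules out the Hirzebruch surfaces $\mathbb{F}_n$ with $n\geq1$ and forces $Y\simeq\PP^2_k$ or $\PP^1_k\times\PP^1_k$. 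Note that your remark that Picard rank two ``should give'' $\PP^1_k\times\PP^1_k$ is not by itself an argument, since rank-two smooth rational surfaces include every $\mathbb{F}_n$.

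The most serious gap is the one you mention last and leave unresolved: in characteristic two the general fiber of the contraction $f$ need not be smooth (wild conic bundles), so property (3)---that a general fiber is $\PP^1_k$---could fail. The paper resolves this via Mori--Saito: if a general fiber of $f$ is not smooth, then $X$ is isomorphic to the variety of \cite[Corollary 8 (1)]{MS03}, which admits a second extremal contraction that is a $\PP^1_k$-bundle by \cite[Remark 9]{MS03}, so one simply replaces $f$ by that contraction; the other wild example \cite[Corollary 8 (2)]{MS03} cannot occur because it is not primitive by \cite[Remark 10]{MS03}. Without this input (or some substitute for it), your argument cannot establish (3) in characteristic two, and the lemma as stated remains unproven.
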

\begin{proof}
We may assume $X$ is primitive.
Then, by \cite[section 8 (8.1),(8.2)]{MM83}, it follows that $\rho(X)=1$ or there exists an extremal ray contraction $f \colon X \to Y$, where $Y$ is a smooth projective surface.
We remark that the argument of \cite[section 8 (8.1),(8.2)]{MM83} works in all characteristic
since an extremal contraction theorem on smooth projective threefolds has been proved in all characteristic in \cite[(1.1) Main Theorem]{Kol91}.
If a general fiber of $f$ is not smooth, then $X$ is isomorphic to the variety of \cite[Corollary 8 (1)]{MS03} and $X$ has another extremal contraction $f' \colon X \to Y'$ which gives a $\PP_k^1$-bundle structure by \cite[Remark 9]{MS03}. By replacing $f$ with $f'$, we may assume a general fiber of $f$ is smooth. 
Note that $X$ can not be isomorphic to the variety of \cite[Corollary 8 (2)]{MS03} since this is not primitive by \cite[Remark 10]{MS03}. 

Next, we show $Y\simeq\PP_k^2$ or $ \PP^1_k\times \PP^1_k$ when $\dim \,Y=2$.
Since $X$ is rationally chain connected by \cite[V, 2.13 Theorem]{Kol96}, so is $Y$. Also, an application of \cite[Lemma 2.4]{Sai03} gives $\kappa(Y)=-\infty$. Therefore $Y$ is a smooth rational surface.
Then it suffices to show there does not exist a curve whose self-intersection is negative. This follows from an argument similar to \cite[Proposition 6.6]{MM83}.
We remark that \cite[Proposition 4.5]{MM83} which is used in the proof of 
\cite[Proposition 6.6]{MM83} is correct in all characteristic. We refer to \cite[Proposition 2.3]{MM86} for the proof which works in all characteristic.
\end{proof}

\begin{thm}[\textup{\cite[Theorem 2.1]{SB18}}, cf.~{\cite[Theorem 1.4]{SB97}}]\label{fano}
Let $X$ be a smooth Fano threefold of $\rho(X)=1$.
Then $H^1(X, \sO_X(-A))=0$ for every ample Cartier divisor $A$ on $X$.
\end{thm}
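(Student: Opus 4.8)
The plan is to reduce the statement to the general machinery of Section~2, rather than to argue Kodaira vanishing by hand. Since $A$ is ample on the threefold $X$, its numerical dimension is $\nu(X,A)=3>1$, and since $X$ is smooth it has only isolated (indeed empty) singularities and $\Omega^{[1]}_X=\Omega_X$. Thus Lemma~\ref{BVtoKV}(2) applies with $D=A$, and it reduces the desired vanishing $H^1(X,\sO_X(-A))=0$ to proving
\[
H^0\!\left(X,\ \Omega_X\otimes\sO_X(-p^eA)\right)=0\qquad\text{for every }e>0 .
\]
Here the reflexive hull appearing in Lemma~\ref{BVtoKV} is harmless, because on the smooth variety $X$ the sheaf $\Omega_X\otimes\sO_X(-p^eA)$ is already locally free.

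For each fixed $e>0$ the divisor $p^eA$ is again ample, hence big. Therefore, \emph{provided} we know that $X$ is separably uniruled, Lemma~\ref{BVonSRC}(1) applied to the big divisor $p^eA$ yields $H^0(X,\Omega^i_X\otimes\sO_X(-p^eA))=0$ for all $i\geq0$, and in particular for $i=1$. Combined with the previous paragraph this finishes the proof. In this way the entire statement is pushed onto a single geometric input: the separable uniruledness of a smooth Fano threefold of Picard rank one.

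The main obstacle is exactly this last input. In characteristic zero every smooth Fano variety is rationally connected, but in positive characteristic producing a \emph{separable} free rational curve through a general point is delicate: the bend-and-break rational curves supplied by Mori theory can be inseparable, or can fail to move in a separable family, when $p=2,3$. This is precisely the difficulty behind the gap in \cite[Theorem 1.4]{SB97} that \cite[Theorem 2.1]{SB18} repairs. Accordingly, I would either invoke separable uniruledness of smooth Fano threefolds directly as an input to the clean reduction above, or, following Shepherd-Barron, establish the vanishing in the Picard-rank-one case by a more careful Frobenius and deformation analysis that controls these low-characteristic pathologies and so circumvents the need to prove separable uniruledness in full. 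In all of this the purely formal content is the reduction recorded in Lemmas~\ref{BVtoKV} and~\ref{BVonSRC}, while the genuinely hard, characteristic-dependent heart of the theorem is the behaviour of rational curves on $X$ when $p=2,3$.
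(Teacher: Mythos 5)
Your formal reduction is fine: since $X$ is smooth and $A$ is ample, Lemma~\ref{BVtoKV}~(2) does reduce $H^1(X,\sO_X(-A))=0$ to $H^0(X,\Omega_X\otimes\sO_X(-p^eA))=0$ for all $e>0$, and Lemma~\ref{BVonSRC}~(1) would give exactly this if $X$ were known to be separably uniruled. But that single remaining input is a genuine gap, not a citable fact: separable uniruledness of smooth Fano threefolds of Picard rank one in positive characteristic is not proved anywhere in this paper or in the references it uses, and it is precisely in the problematic characteristics $p=2,3$ an open (and dangerous) question --- Koll\'ar's inseparable-cover technique \cite{Kol95}, the very source of Lemma~\ref{BVonSRC}, is designed to produce Fano-type hypersurfaces in small characteristic carrying big line subsheaves of $\Omega^i_X$, i.e.\ failing separable uniruledness. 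So ``invoking it directly as an input'' is not available, and your alternative --- ``following Shepherd-Barron, establish the vanishing by a more careful Frobenius and deformation analysis'' --- is simply a pointer to the proof you were asked to give. The hard content of the theorem is entirely absent from the proposal.

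It is also worth noting that the paper's logic runs in the opposite direction from yours, which shows the missing input cannot be recovered from the paper without circularity. The paper first proves Theorem~\ref{fano} by a self-contained argument (reproduced in the appendix from \cite{SB18}): assume a maximal numerical class $nH$ with $H^1(X,\sO_X(-D))\neq0$, use Fujita vanishing to ensure such a maximum exists, use $H^1(X,\sO_X(-pD))=0$ and Maddock's construction \cite{Mad} to build a purely inseparable degree-$p$ cover $\rho\colon Y\to X$ with $-K_Y=\rho^*(-K_X+(p-1)D)$, apply Koll\'ar's bend-and-break bound $(-K_Y)\cdot R\le \dim Y+1$ \cite{Kol96} to force $p\in\{2,3\}$, and then rule out these cases by a conductor/adjunction analysis on a prime divisor $T=(\tilde\rho^*H)_{\mathrm{red}}$ together with Riemann--Roch parity arguments. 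Only \emph{afterwards}, in Theorem~\ref{BVonfano}, does the paper obtain the statement $H^0(X,\Omega_X\otimes\sO_X(-D))=0$ for $\rho(X)=1$ --- and it does so \emph{using} Theorem~\ref{fano} (to get torsion-freeness of $\Pic(X)$, then van der Geer--Katsura), precisely because separable uniruledness is unavailable in this case; Lemma~\ref{BVonSRC} is invoked only when $X$ admits a fibration structure ($\dim Y\neq0$), where separable rational connectedness is known. Your proposal inverts this order and hangs everything on the one fact that neither the paper nor the literature supplies.
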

\begin{rem}
Theorem \ref{fano} was originally claimed in \cite[Theorem 1.4]{SB97}, but the proof has a gap in the case where $p=2$ or $3$. 
Shepherd-Barron corrected his proof in \cite[Theorem 2.1]{SB18}, which is included as an appendix of this paper.
\end{rem}

Here, let us recall the Cartier operators.
Let $X$ be a smooth variety.
The Frobenius pushforward of the de Rham complex
\[
F_{*}\Omega^{\bullet}_X : F_{*}\sO_X \overset{F_{*}d}{\to} F_{*}\Omega_X \overset{F_{*}d}{\to} \cdots
\]
is a complex of $\sO_X$-module homomorphisms.
We define coherent $\sO_X$-modules as follows.
\[
\begin{array}{rl}
&B_X\coloneqq\Im(F_{*}d : F_{*}\sO_X \to F_{*}\Omega_X),\\
&Z_X\coloneqq\Ker(F_{*}d : F_{*}\Omega_X \to F_{*}\Omega^{2}_X).\\
\end{array}
\]
Then we have the exact sequence
\begin{align}
0 \to \sO_X \to F_{*}\sO_X \overset{F_{*}d}{\to} B_X\to 0. \tag{3.1}
\end{align}
Also, we have the exact sequence arising from the Cartier isomorphism,
\begin{align}
0 \to B_X \to  Z_X \overset{C}{\to} \Omega_X \to 0. \tag{3.2}
\end{align}
We refer to \cite[1.3.4 Theorem]{fbook} for the details.

Next, let us recall the Riemann-Roch theorem for smooth projective threefolds.
Let $X$ be a smooth projective threefold and $D$ be a Cartier divisor on $X$.
Then the Riemann-Roch theorem states
\begin{align}
\mathcal{X}(\sO_X(D))&=\frac{1}{12}D\cdot (D-K_X)\cdot (2D-K_X)+\frac{1}{12}D \cdot c_2(X)+\mathcal{X}(\sO_X)\tag{3.3},\\
\mathcal{X}(\sO_X)&=\frac{1}{24}(-K_X) \cdot c_2(X) \tag{3.4}.
\end{align}
We refer to \cite[A. Exercise 6.8]{Har} for the details.

\begin{thm}\label{BVonfano}
Let $X$ be a smooth Fano threefold.
Then $H^0(X, \Omega_X\otimes \sO_X(-D))=0$ for every Cartier divisor $D$ with $\kappa(X, D)\geq 0$. 
\end{thm}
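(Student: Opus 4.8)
\textbf{Proof plan for Theorem \ref{BVonfano}.}

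The plan is to reduce to the case $\rho(X)=1$ using the structure result of Lemma \ref{fanoMMP}, and then to exploit the Cartier operator sequences together with the $\rho(X)=1$ vanishing of Theorem \ref{fano}. First I would reduce to showing the vanishing when $\kappa(X,D)\geq 0$; by replacing $D$ with a multiple if necessary, I may assume $D$ is effective, or at least that $\kappa(X,D)\ge 0$ forces a section of some $\sO_X(mD)$. The key structural input is the factorization
\[
X=X_0 \overset{\varphi_0}{\rightarrow} X_1 \overset{\varphi_1}{\rightarrow} \cdots \overset{\varphi_{\ell-1}}{\rightarrow} X_{\ell} \overset{f}{\rightarrow} Y
\]
from Lemma \ref{fanoMMP}, where each $\varphi_i$ is a blow-up of a smooth curve and $f$ is a Mori fiber space over a point, $\PP_k^2$, or $\PP^1_k\times\PP^1_k$. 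Since a suitable power of Frobenius and the reflexive-hull bookkeeping of Lemma \ref{push} lets me push the section $\sO_X(D)\subset\Omega_X$ (equivalently, a nonzero element of $H^0(X,\Omega_X\otimes\sO_X(-D))$) forward along each birational contraction $\varphi_i$, I would argue that it suffices to establish the statement on the final variety $X_\ell$. In the smooth birational-morphism setting, $H^0(X,\Omega_X\otimes\sO_X(-D))$ injects into the corresponding group on $X_{i+1}$ for the pushforward divisor, so a nonvanishing section upstairs yields one downstairs.

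Once reduced to $X_\ell$, there are three cases according to the target $Y$. When $Y$ is a point, $\rho(X_\ell)=1$ and I would apply Theorem \ref{fano} directly: a nonzero section of $\Omega_{X_\ell}\otimes\sO_{X_\ell}(-D)$ with $\kappa\ge 0$ would, via the Cartier sequences $(3.1)$ and $(3.2)$, contradict $H^1(X_\ell,\sO_{X_\ell}(-A))=0$ for ample $A$. Concretely, from $(3.2)$ a section of $\Omega_X\otimes\sO_X(-D)$ lifts (up to capturing the obstruction in $H^1$ of the $B_X$ and $Z_X$ twists) to Frobenius data, and the vanishing of $H^1(X,\sO_X(-p^eD))$ coming from Theorem \ref{fano} applied to the ample $p^eD$ (once $\kappa(X,D)\ge 0$ and $\rho=1$ make $D$ ample up to numerical equivalence) forces the section to vanish. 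When $\dim Y=2$, so that $Y\cong\PP_k^2$ or $\PP^1_k\times\PP^1_k$ and $f$ is a conic-bundle with general fiber $\PP^1_k$, I would use the relative structure: the restriction of any putative section to a general fiber $\PP^1_k$ lands in $\Omega_X|_{\PP^1_k}\otimes\sO(-D)$, and pulling apart $\Omega_X$ via the relative cotangent sequence $0\to f^*\Omega_Y\to \Omega_X\to\Omega_{X/Y}\to 0$ reduces the problem to sections along fibers, which vanish because a line bundle of nonnegative Iitaka dimension has nonnegative degree on the fiber while $\Omega_{\PP^1}=\sO(-2)$ is negative. A parallel argument controls the $f^*\Omega_Y$ part using $\kappa(Y)=-\infty$ and Lemma \ref{BVonSRC}.

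The main obstacle I anticipate is the $\dim Y=2$ case, where $\Omega_X$ is not as directly governed by the $\rho=1$ vanishing and I must instead combine the relative cotangent sequence with the separable rational connectedness of $X$ (a smooth Fano threefold is separably rationally connected in the relevant range) to invoke Lemma \ref{BVonSRC}(2), which gives $H^0(X,\Omega_X^i\otimes\sO_X(-D))=0$ for $i>0$ once $\kappa(X,D)\ge 0$. If $X$ is separably rationally connected, Lemma \ref{BVonSRC}(2) applied with $i=1$ settles the theorem at once on $X_\ell$, so the genuinely delicate point is verifying separable rational connectedness (or separable uniruledness together with bigness) in small characteristic, precisely the regime where the original argument of \cite{SB97} had a gap. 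I would therefore lean on Lemma \ref{BVonSRC} as the decisive tool on the terminal variety and use the reduction steps above only to transport a hypothetical nonzero section down to a variety where that lemma, or Theorem \ref{fano} through the Cartier sequences, applies.
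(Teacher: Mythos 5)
Your overall architecture --- push a hypothetical section down the chain of Lemma \ref{fanoMMP} via Lemma \ref{push}, then dispose of the positive-dimensional-base case by separable rational connectedness and Lemma \ref{BVonSRC} (2) --- is exactly the paper's, and that part is fine (the SRC input you worry about is simply quoted from \cite[Theorem 0.5]{GLP$^+$15}; it is not where the difficulty lies). The genuine gap is in the case $\rho(X)=1$, which is the heart of the theorem. Your pivotal claim that $\kappa(X,D)\ge 0$ and $\rho(X)=1$ ``make $D$ ample up to numerical equivalence'' fails exactly when $\kappa(X,D)=0$: there $D$ is numerically trivial, $p^eD$ is never ample, and Theorem \ref{fano} gives you nothing. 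This case cannot be discarded --- every other case is reduced to it --- and it amounts to proving $H^0(X,\Omega_X)=0$. That vanishing is not formal: the paper first shows, using Theorem \ref{fano}, Riemann--Roch, and rational chain connectedness, that $H^1(X,\sO_X)=H^2(X,\sO_X)=0$ and that $\Pic(X)\simeq\Z$ is torsion-free (so that $D=0$), and then identifies $H^0(X,\Omega_X)\simeq \Pic(X)[p]\otimes_{\mathbb{F}_p}k$ via the Cartier-operator sequences (3.1), (3.2) and \cite[Proposition 4.3]{GK03}; torsion-freeness of $\Pic(X)$ then kills the forms. Your proposal contains no mechanism at all for excluding a global $1$-form, and some such input (van der Geer--Katsura or an equivalent) is indispensable.

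Second, even when $D$ is ample, your ``lift through the Cartier sequences and use $H^1(X,\sO_X(-p^eD))=0$'' step runs the implication backwards. Lemma \ref{BVtoKV} goes from $H^0(X,\Omega_X\otimes\sO_X(-p^eD))=0$ for all $e$ to $H^1(X,\sO_X(-D))=0$, not conversely. To reverse it you must kill the obstruction group $H^1(X,B_X\otimes\sO_X(-D))$, and since $H^1(X,\sO_X(-pD))=0$ this group only injects into $H^2(X,\sO_X(-D))\simeq H^1(X,\sO_X(K_X+D))^{*}$, a vanishing that Theorem \ref{fano} does not supply (it gives $H^2(X,\sO_X(K_X+A))=0$ for ample $A$, which is one degree off). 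The paper sidesteps this entirely: by Riemann--Roch (3.3)--(3.4) one has $\mathcal{X}(\sO_X(D))>0$, and $h^2(X,\sO_X(D))=h^1(X,\sO_X(-(D-K_X)))=0$ by Theorem \ref{fano} since $D-K_X$ is ample; hence $h^0(X,\sO_X(D))>0$, so $D$ is linearly equivalent to an effective divisor and $H^0(X,\Omega_X\otimes\sO_X(-D))\hookrightarrow H^0(X,\Omega_X)=0$, reducing to the previous case. Note also that your opening move ``replace $D$ by a multiple so that it is effective'' is not permitted here: $H^0(\Omega_X\otimes\sO_X(-D))$ is not monotone under $D\mapsto mD$, so one needs, as the paper proves, effectivity of $D$ itself.
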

\begin{proof}
We consider the sequence of Theorem \ref{fanoMMP} and use the same notation.
By Lemma \ref{push}, we may assume $X$ is primitive.
If $\dim\,Y\neq0$, then $X$ is separably rationally connected by \cite[Theorem 0.5]{GLP$^+$15} and the assertion follows from Lemma \ref{BVonSRC} (2). Thus we may assume  $\rho(X)=1$.
Then an application of Theorem \ref{fano} gives $H^1(X, \sO_X)=H^2(X, \sO_X)=0$ and $\Pic(X)\simeq \Z$.
This follows from the same argument as in \cite[Corollary 1.5 (1), (2)]{SB97}, but we prove here for the convenience of the reader. 
Since $X$ is rationally chain connected by 
\cite[V, 2.13 Theorem]{Kol96}, we have $\Pic^0(X)=0$ and hence $\Pic(X)=\NS(X)$.
By \cite[Theorem 4.2]{XZ19}, it follows from $\Pic^0(X)=0$ that $h^1(X, \sO_X)\leq h^2(X, \sO_X)$.
Since $h^2(X, \sO_X)=h^1(X, \sO_X(K_X))=0$ by Theorem \ref{fano}, we obtain $h^1(X, \sO_X)=0$ and $\mathcal{X}(\sO_X)=1$.
Let $\sO_X(B)\in \Pic(X)$ be a torsion element. Since $h^2(X, \sO_X(B))=h^1(X, \sO_X(K_X-B))=0$ by Theorem \ref{fano}, we get $h^0(X, \sO_X(B))\geq\mathcal{X}(\sO_X(B))\underset{B\equiv0}{=}\mathcal{X}(\sO_X)=1$. Thus $\sO_X(B)\simeq \sO_X$ and hence $\Pic(X)$ is torsion-free. Together with $\Pic(X)=\NS(X)$, we obtain $\Pic(X)= N^1(X)\simeq\Z$.

Now, let us show $H^0(X, \Omega_X\otimes \sO_X(-D))=0$, where $D$ is a Cartier divisor with $\kappa(X, D)\geq 0.$
We first consider the case where $\kappa(X, D)=0$. In this case, it follows from $\Pic(X)\simeq \Z$ that $D=0$, and thus it suffices to show $H^0(X, \Omega_X)=0$.
By the exact sequence (3.1) and $H^1(X, \sO_X)=H^2(X, \sO_X)=0$, we get $H^0(X, B_X)=H^1(X, B_X)=0$.
Then, by the exact sequence (3.2), we obtain $H^0(X, Z_X)\overset{C}{\simeq} H^0(X, \Omega_X)$ and
the natural inclusion map $H^0(X, Z_X)\hookrightarrow H^0(X, \Omega_X)$ is isomorphic.
Now, an application of \cite[Proposition 4.3]{GK03} gives
\[
H^0(X, \Omega_X)\simeq \Pic(X)[p]\otimes_{\mathbb{F}_p} k,
\]
where $\Pic(X)[p]$ denotes a subgroup of $\Pic(X)$ composed by $p$-torsion elements.
Since $\Pic(X)$ is torsion-free, we obtain $H^0(X, \Omega_X)=0$.
Next, we discuss the case where $\kappa(X, D)>0$.
Since $\rho(X)=1$, we can denote $D\equiv a(-K_X)$ for some $a\in \Q_{>0}$. 
Together with $\mathcal{X}(\sO_X)=1$, the Riemann-Roch theorem gives
\[
\begin{array}{rl}
\mathcal{X}(\sO_X(D))\underset{(3.3)}{=}&\frac{1}{12}D\cdot (D-K_X)\cdot (2D-K_X)+\frac{1}{12}D \cdot c_2(X)+\mathcal{X}(\sO_X)\\
                                 \underset{(3.4)}{=}&\frac{1}{12}D\cdot (D-K_X)\cdot (2D-K_X)+2a+1\\
                                 >&0.
\end{array}
\]
Now we have $h^0(X, \sO_X(D))\geq \mathcal{X}(\sO_X(D))>0$ since
$h^2(X, \sO_X(D))= h^1(X, \sO_X(K_X-D))=0$ by Theorem \ref{fano}.
Therefore $D$ is linearly equivalent to an effective divisor, and hence we obtain $H^0(X, \Omega_X\otimes \sO_X(-D))\hookrightarrow H^0(X, \Omega_X)=0$.
\end{proof}

\begin{cor}\label{KVVonfano}
Let $X$ be a smooth Fano threefold and $D$ be a nef Cartier divisor with $\nu(X, D)>1$.
Then $H^1(X, \sO_X(-D))=0$. 
\end{cor}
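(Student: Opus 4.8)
The plan is to deduce Corollary \ref{KVVonfano} from Lemma \ref{BVtoKV} (2) together with Theorem \ref{BVonfano}. First I would observe that a smooth Fano threefold $X$ is in particular a threefold with only isolated singularities (indeed it is smooth, so the singular locus is empty), so condition (2) of Lemma \ref{BVtoKV} is satisfied. Thus, to conclude $H^1(X, \sO_X(-D))=0$ for the given nef Cartier divisor $D$ with $\nu(X, D)>1$, it suffices to verify the hypothesis of Lemma \ref{BVtoKV}, namely that $H^0(X, (\Omega^{[1]}_X\otimes\sO_X(-p^eD))^{**})=0$ for every $e>0$. Since $X$ is smooth, $\Omega^{[1]}_X=\Omega_X$ and the reflexive hull is superfluous, so this amounts to showing $H^0(X, \Omega_X\otimes\sO_X(-p^eD))=0$ for all $e>0$.

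The key step is to apply Theorem \ref{BVonfano} with the Cartier divisor $p^eD$ in place of $D$. Theorem \ref{BVonfano} tells us that $H^0(X, \Omega_X\otimes\sO_X(-D'))=0$ for any Cartier divisor $D'$ with $\kappa(X, D')\geq 0$, so the only thing left to check is that $\kappa(X, p^eD)\geq 0$ for each $e>0$. Here I would use the hypothesis $\nu(X, D)>1$ together with nefness: the inequality $\kappa(X, D)\leq\nu(X, D)$ does not directly help, but nefness and $\nu(X, D)>1$ are exactly the ingredients that force $D$ to be effective up to numerical equivalence in the Fano setting. Concretely, since $X$ is Fano with $\rho(X)$ reducible to $1$ after the reduction in Theorem \ref{BVonfano}, a nef nonzero divisor is automatically big-like enough that $\kappa\geq 0$; more robustly, I would argue via Riemann--Roch exactly as in the proof of Theorem \ref{BVonfano}, where the computation of $\mathcal{X}(\sO_X(D'))>0$ combined with the vanishing $h^2(X, \sO_X(D'))=0$ yields $h^0(X, \sO_X(D'))>0$, hence $\kappa(X, D')\geq 0$.

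The main obstacle I anticipate is establishing $\kappa(X, p^eD)\geq 0$ cleanly without reproving the Riemann--Roch estimate in full. The cleanest route is to note that the bulk of this work is already subsumed in Theorem \ref{BVonfano}: once we know $\kappa(X, p^eD)\geq 0$, that theorem applies verbatim. Since $\nu(X, D)>1$ and $D$ is nef, $D$ is not numerically trivial, and nefness gives $p^eD\cdot C\geq 0$ for every curve $C$; combined with the positivity of $-K_X$ one shows $\mathcal{X}(\sO_X(p^eD))>0$ by the same Riemann--Roch computation as in Theorem \ref{BVonfano}, and $h^2(X,\sO_X(p^eD))=h^1(X,\sO_X(K_X-p^eD))=0$ by Theorem \ref{fano} (after the reduction to $\rho(X)=1$), so $h^0(X,\sO_X(p^eD))>0$ and therefore $\kappa(X, p^eD)\geq 0$. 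Feeding this into Theorem \ref{BVonfano} gives $H^0(X, \Omega_X\otimes\sO_X(-p^eD))=0$ for all $e>0$, and Lemma \ref{BVtoKV} (2) then delivers $H^1(X, \sO_X(-D))=0$, completing the proof.
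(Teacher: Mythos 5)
Your skeleton agrees with the paper's: combine Lemma \ref{BVtoKV} (2) with Theorem \ref{BVonfano}, so that everything reduces to proving $\kappa(X, p^eD)\geq 0$ (equivalently $\kappa(X,D)\geq 0$) on the original $X$. The gap is in how you prove this. You claim $h^2(X,\sO_X(p^eD))=h^1(X,\sO_X(K_X-p^eD))=0$ ``by Theorem \ref{fano} (after the reduction to $\rho(X)=1$)'', but no such reduction exists for this quantity. Theorem \ref{fano} genuinely requires $\rho(X)=1$, and the reduction inside Theorem \ref{BVonfano} is of a different kind: Lemma \ref{push} transfers sections of $(\Omega^{[1]}_X\otimes\sO_X(-D))^{**}$ and yields $\kappa(X,D)\leq\kappa(X',D')$, an inequality pointing the wrong way for you (knowing $\kappa\geq 0$ on the $\rho=1$ model says nothing about $\kappa(X,p^eD)$, which is what Theorem \ref{BVonfano} needs as input on $X$). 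Cohomology groups like $h^2(X,\sO_X(p^eD))$ do not transfer along the contractions of Lemma \ref{fanoMMP} at all; nefness of $D$ is not preserved by pushforward; and when the sequence of Lemma \ref{fanoMMP} ends in a Mori fiber space over $\PP^2_k$ or $\PP^1_k\times\PP^1_k$, there is no $\rho=1$ model to reduce to. Note also that on a $\rho=1$ Fano threefold every nef, numerically nontrivial divisor is ample, so the genuinely hard case $\nu(X,D)=2$ only arises when $\rho(X)>1$; an argument that handles it by reducing to $\rho(X)=1$ is therefore circular in spirit. Finally, the Riemann--Roch positivity in Theorem \ref{BVonfano} is computed by writing $D\equiv a(-K_X)$, which again is special to $\rho(X)=1$, so it cannot be quoted verbatim either.

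The paper closes exactly this gap by a bootstrap carried out entirely on $X$, with no change of model. When $\nu(X,D)=3$, the divisor $D$ is nef and big, so $\kappa(X,D)=3$ for free, and Theorem \ref{BVonfano} plus Lemma \ref{BVtoKV} (2) give the vanishing; this case needs no assumption on $\rho(X)$. When $\nu(X,D)=2$, the divisor $mD-K_X$ is nef and big for every $m>0$, so the case just proved, applied to $mD-K_X$, yields $h^2(X,\sO_X(mD))=h^1(X,\sO_X(-(mD-K_X)))=0$; Riemann--Roch, using $D^3=0$ and $D^2\cdot(-K_X)>0$, then shows $h^0(X,\sO_X(mD))$ grows quadratically in $m$, hence $\kappa(X,D)=2\geq0$, and one concludes as before. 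Replacing your appeal to Theorem \ref{fano} by this self-referential use of the nef-and-big case repairs the proof.
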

\begin{proof}
If $\nu(X, D)=3$, that is, if $D$ is nef and big, then the assertion follows from Lemma \ref{BVtoKV} (2) and Theorem \ref{BVonfano}.
Now, we assume $\nu(X, D)=2$. Since $\nu(X, mD-K_X)=3$, we have $h^2(X, \sO_X(mD))=h^1(X, \sO_X(K_X-mD))=0$ for all $m>0$ by the argument above. 
Then the Riemann-Roch theorem (3.3) gives
\[
h^0(X, \sO_X(mD))\geq \frac{D^2\cdot (-K_X)}{4}m^2+\frac{D\cdot K_X^2+D\cdot c_2(X)}{12}m+\mathcal{X}(\sO_X).
\]
Since $D^2\cdot (-K_X)>0$, we have $\kappa(X, D)=2$ and hence $H^0(X, \Omega_X\otimes\sO_X(-nD))=0$ for all $n>0$ by Theorem \ref{BVonfano}. Then the assertion follows from Lemma \ref{BVtoKV} (2). 
\end{proof}

\begin{cor}[\textup{cf.~\cite[Corollary 1.5]{SB97}}]\label{app}
Let $X$ be a smooth Fano threefold.
Then the following hold.
\begin{enumerate}\renewcommand{\labelenumi}{$($\textup{\arabic{enumi}}$)$}
\item{$H^i(X, \sO_X)=0$ for all $i>0$. In particular, $\mathcal{X}(\sO_X)=1$.}
\item{$\Pic(X)=\Z^{\rho(X)}$.}
\item{$X$ is simply connected, i.e., any finite \'etale morphism $f\colon Y\to X$ is an isomorphism.}
\end{enumerate}
\end{cor}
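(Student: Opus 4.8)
The plan is to deduce all three statements from (1), so I would prove (1) first. For (1) I would run the reduction of Lemma \ref{fanoMMP}. Each $\varphi_i\colon X_i\to X_{i+1}$ is the blow-up of a smooth threefold along a smooth curve, so $R\varphi_{i*}\sO_{X_i}=\sO_{X_{i+1}}$ (the fibers are points or $\PP^1_k$'s, which have no higher cohomology of the structure sheaf); iterating the Leray spectral sequence gives $H^i(X,\sO_X)\cong H^i(X_\ell,\sO_{X_\ell})$ for all $i$, so it suffices to treat the Mori fiber space $f\colon X_\ell\to Y$. If $Y$ is a point, then $\rho(X_\ell)=1$ and Theorem \ref{fano} gives $H^1(X_\ell,\sO_{X_\ell})=H^2(X_\ell,\sO_{X_\ell})=0$ (the second by Serre duality applied to the ample divisor $-K_{X_\ell}$), while $H^3(X_\ell,\sO_{X_\ell})=H^0(X_\ell,\sO_{X_\ell}(K_{X_\ell}))^{*}=0$ because $K_{X_\ell}$ is anti-ample. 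If $Y\simeq\PP_k^2$ or $\PP^1_k\times\PP^1_k$, then $f$ is a conic bundle: it is flat with one-dimensional fibers, each of which is a conic and hence satisfies $h^0(F,\sO_F)=1$ and $h^1(F,\sO_F)=0$. Cohomology and base change then yield $f_*\sO_{X_\ell}=\sO_Y$, $R^1f_*\sO_{X_\ell}=0$, and $R^{\ge2}f_*\sO_{X_\ell}=0$, so Leray gives $H^i(X_\ell,\sO_{X_\ell})\cong H^i(Y,\sO_Y)=0$ for $i>0$. In every case $H^0(X,\sO_X)=k$ and $H^{>0}(X,\sO_X)=0$, whence $\mathcal{X}(\sO_X)=1$; this proves (1).

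For (2), rational chain connectedness of $X$ (\cite[V, 2.13 Theorem]{Kol96}) gives $\Pic^0(X)=0$, so $\Pic(X)=\NS(X)$ is finitely generated; since $N^1(X)=\NS(X)/(\textup{torsion})$, it remains only to show $\Pic(X)$ is torsion-free, after which $\Pic(X)\cong\Z^{\rho(X)}$. Let $\sO_X(B)$ be a torsion class, so $B\equiv0$; then the Riemann--Roch formula (3.3) and (1) give $\mathcal{X}(\sO_X(B))=\mathcal{X}(\sO_X)=1$. As $-K_X+B\equiv -K_X$ is ample, Corollary \ref{KVVonfano} gives $h^2(X,\sO_X(B))=h^1(X,\sO_X(K_X-B))=0$, and $h^3(X,\sO_X(B))=h^0(X,\sO_X(K_X-B))=0$ since $K_X-B$ is anti-ample. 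Hence $h^0(X,\sO_X(B))\ge\mathcal{X}(\sO_X(B))=1$, so $B$ is linearly equivalent to an effective divisor numerically equivalent to zero, forcing $\sO_X(B)\simeq\sO_X$. Thus $\Pic(X)$ is torsion-free and (2) follows.

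For (3), let $f\colon Y\to X$ be a finite étale morphism with $Y$ connected (the general case reduces to this). Then $Y$ is a connected smooth threefold with $-K_Y=f^*(-K_X)$ ample, hence a smooth Fano threefold, so $\mathcal{X}(\sO_Y)=1$ by (1). Since $f$ is étale, $K_Y=f^*K_X$ and $c_2(Y)=f^*c_2(X)$, so by formula (3.4) and the projection formula, writing $d=\deg f$,
\[
\mathcal{X}(\sO_Y)=\tfrac{1}{24}(-K_Y)\cdot c_2(Y)=\tfrac{d}{24}(-K_X)\cdot c_2(X)=d\,\mathcal{X}(\sO_X)=d.
\]
Comparing with $\mathcal{X}(\sO_Y)=1$ yields $d=1$, so $f$ is an isomorphism.

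The main obstacle I expect lies in the fibration case of (1): one must be sure that in characteristic $p$ every fiber of $f\colon X_\ell\to Y$ really is a conic (equivalently, that $f$ is flat with one-dimensional fibers all satisfying $h^1(F,\sO_F)=0$), so that $R^1f_*\sO_{X_\ell}=0$ and the Leray computation goes through; any wild or unexpectedly degenerate fiber would threaten this step. The remaining ingredients—the smooth blow-up formula, the $\rho=1$ vanishing from Theorem \ref{fano}, and the Riemann--Roch bookkeeping in (2) and (3)—are routine.
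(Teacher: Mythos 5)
Your reduction via Lemma \ref{fanoMMP}, the Leray computation in the fibration case, and parts (2) and (3) are sound as far as they go (the paper itself simply cites \cite[Corollary 1.5]{SB97} for this corollary, and your argument for (3) via multiplicativity of $\mathcal{X}(\sO)$ under \'etale covers is the standard one). However, there is a genuine gap at the step you treat as routine: in the case where $Y$ is a point, you claim that Theorem \ref{fano} gives $H^1(X_\ell, \sO_{X_\ell}) = 0$. It does not. Theorem \ref{fano} only yields $H^1(X_\ell, \sO_{X_\ell}(-A)) = 0$ for $A$ \emph{ample}, and $\sO_{X_\ell}$ is not such a twist; it handles $H^2(X_\ell,\sO_{X_\ell})$ via Serre duality, as you note, and $H^3$ is easy, but it says nothing about $H^1(X_\ell,\sO_{X_\ell})$. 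In characteristic $p$ this is exactly the delicate point: rational chain connectedness only gives $\Pic^0(X_\ell) = 0$ as a group of $k$-points, and this does \emph{not} imply $h^1(X_\ell, \sO_{X_\ell}) = 0$, because the Picard scheme may be non-reduced ($h^1(\sO)$ is the tangent space to the Picard scheme at the origin, which can be nonzero even when $\Pic^0$ has no nontrivial points).

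The paper's way around this (carried out, for $\rho = 1$, inside the proof of Theorem \ref{BVonfano}) is: rational chain connectedness gives $\Pic^0(X) = 0$, hence $\Pic(X) = \NS(X)$; then \cite[Theorem 4.2]{XZ19} gives the inequality $h^1(X, \sO_X) \le h^2(X, \sO_X)$; and since $h^2(X, \sO_X) = h^1(X, \sO_X(K_X)) = 0$ by Theorem \ref{fano} and Serre duality, one concludes $h^1(X, \sO_X) = 0$. Without this input (or some substitute ruling out a non-reduced Picard scheme), your proof of (1) is incomplete in the $\rho = 1$ case, and the gap propagates to (2) and (3), since both use $\mathcal{X}(\sO_X) = 1$, which in your own bookkeeping equals $1 - h^1(X,\sO_X)$. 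Ironically, the step you flagged as the main obstacle --- that every fiber of $f \colon X_\ell \to Y$ is a conic, so that $R^1 f_* \sO_{X_\ell} = 0$ --- is fine by the classification of extremal contractions on smooth threefolds in \cite[(1.1) Main Theorem]{Kol91}; the real difficulty sits in the case you thought was routine.
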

\begin{proof}
We refer to \cite[Corollary 1.5]{SB97} for the proof.
\end{proof}
\section{Kawamata--Viehweg type vanishing on canonical del Pezzo surfaces}
In this section, we consider a Kawamata--Viehweg type vanishing theorem on canonical del Pezzo surfaces.
\begin{defn}\label{GdelPezzo}
We say $X$ is a \emph{canonical del Pezzo surface} if $X$ is a normal projective surface with canonical singularities such that $-K_X$ is ample.
\end{defn}

If $p=2$, by taking $d=3, q_1=1, q_2=2$ in \cite[Theorem 4.2 (6)]{CT19}, we obtain a canonical del Pezzo surface $X$ violating the Kawamata-Viehweg vanishing theorem, that is, 
$H^1(X, \sO_X(-A))\neq0$ for some ample $\Q$-Cartier Weil divisor $A$. 
In this section, we prove this pathological phenomenon can not happen unless $p=2$.

First, we gather the basic facts about canonical del Pezzo surfaces in the following lemma. 
\begin{lem}\label{basic}
Let $X$ be a canonical del Pezzo surface.
Then the following hold.
\begin{enumerate}\renewcommand{\labelenumi}{$($\textup{\arabic{enumi}}$)$}
\item{$\dim|-K_X|=K_X^2$.}
\item{$|-K_X|$ has no fixed part.}
\item{A general member of $|-K_X|$ is a locally complete intersection (l.c.i. for short) curve with arithmetic genus one.}
\item{If $K_X^2\geq3$, then $\omega_X^{-1}$ is very ample.}
\item{If $K_X^2=2$, then $\omega_X^{-1}$ is globally generated and $\omega_X^{-2}$ is very ample.}
\item{If $K_X^2=1$, then $\omega_X^{-2}$ is globally generated and $\omega_X^{-3}$ is very ample.} 
\end{enumerate}
\end{lem}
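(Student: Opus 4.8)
The plan is to work on the minimal resolution $\pi\colon \tilde{X}\to X$ and to exploit that the canonical (hence Du~Val) singularities of $X$ are rational double points. First I would record that $\tilde{X}$ is a smooth weak del Pezzo surface: a smooth rational surface with $-K_{\tilde{X}}=-\pi^{*}K_X$ nef and big whose $\pi$-exceptional curves are $(-2)$-curves $E_i$ with $(-K_{\tilde{X}})\cdot E_i=0$. Because rational double points are rational singularities in every characteristic, the projection formula yields $\pi_{*}\sO_{\tilde{X}}(-K_{\tilde{X}})=\sO_X(-K_X)$ and $R^{j}\pi_{*}\sO_{\tilde{X}}(-K_{\tilde{X}})=0$ for $j>0$, so that $H^{i}(X,\sO_X(-K_X))\simeq H^{i}(\tilde{X},\sO_{\tilde{X}}(-K_{\tilde{X}}))$ for all $i$; in the same way $H^{1}(X,\sO_X)=H^{2}(X,\sO_X)=0$ and $\chi(\sO_X)=\chi(\sO_{\tilde{X}})=1$ by rationality of $\tilde{X}$. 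Granting this, statement (1) reduces to Riemann--Roch on the Gorenstein surface $X$,
\[
\chi(\sO_X(-K_X))=\chi(\sO_X)+\tfrac{1}{2}(-K_X)\cdot(-2K_X)=1+K_X^{2},
\]
together with $H^{2}(X,\sO_X(-K_X))\simeq H^{0}(X,\sO_X(2K_X))^{*}=0$ (as $2K_X$ is anti-ample) and the vanishing $H^{1}(X,\sO_X(-K_X))=0$.

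The crux is the last vanishing, and this is exactly where positive characteristic bites, since Kawamata--Viehweg is unavailable. I would replace it by a genus-one anticanonical member. From $\chi(\sO_X(-K_X))=1+K_X^{2}\ge 2$ and $H^{2}=0$ one gets $h^{0}(X,\sO_X(-K_X))\ge 2$, so $|-K_X|$ is nonempty; choose $C\in|-K_X|$, an effective Cartier divisor as $-K_X$ is Cartier. By adjunction on the Gorenstein surface $X$ one has $\omega_C=\sO_X(K_X+C)|_C=\sO_C$, so $C$ has arithmetic genus one, and $\sO_C(-K_X)=\sO_C(C)$ has positive degree $K_X^{2}$. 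Feeding $0\to\sO_X\to\sO_X(-K_X)\to\sO_C(-K_X)\to 0$ into cohomology and using $H^{1}(X,\sO_X)=H^{2}(X,\sO_X)=0$ gives $H^{1}(X,\sO_X(-K_X))\simeq H^{1}(C,\sO_C(-K_X))$, and Serre duality on $C$ together with $\omega_C=\sO_C$ identifies the latter with $H^{0}(C,\sO_C(K_X))^{*}$. Since $-K_X|_C$ is ample and $C$, being a Cartier divisor on the Gorenstein surface $X$, has no embedded points, the anti-ample sheaf $\sO_C(K_X)$ has no nonzero global sections; hence $H^{1}(X,\sO_X(-K_X))=0$ and (1) follows. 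The same circle of ideas gives (3): because $(-K_{\tilde{X}})\cdot E_i=0$, a general member of $|-K_{\tilde{X}}|$ is disjoint from every $(-2)$-curve, so a general $C\in|-K_X|$ avoids $X_{\sg}$ and is an effective Cartier divisor supported in the smooth locus, hence a l.c.i. curve, of arithmetic genus one by the adjunction computation above.

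For (2) I would again pass to $\tilde{X}$, where $\pi^{*}|-K_X|=|-K_{\tilde{X}}|$, and invoke the classical fact that the anticanonical system of a weak del Pezzo surface has no fixed component: the only prime divisors $\Gamma$ with $(-K_{\tilde{X}})\cdot\Gamma=0$ are the $(-2)$-curves, and these are not fixed since a general member avoids them; pushing forward gives the statement on $X$. Finally, for the very-ampleness statements (4)--(6) I would reduce, once more, to the minimal resolution and appeal to the structure theory of weak del Pezzo surfaces and their anticanonical models, which is developed in arbitrary characteristic: the morphism attached to $|-K_{\tilde{X}}|$ (respectively $|-2K_{\tilde{X}}|$ and $|-3K_{\tilde{X}}|$ in the degree-two and degree-one cases) contracts precisely the $(-2)$-curves and embeds their complement, with image the anticanonical model, which is exactly $X$; the intermediate global-generation assertions come from the corresponding base-point-freeness on $\tilde{X}$. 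I expect the single genuine obstacle to be the characteristic-$p$ vanishing $H^{1}(X,\sO_X(-K_X))=0$ underlying (1)--(3): both Kawamata--Viehweg and Reider's theorem can fail here, and the point of the argument is that the genus-one member reduction and the classical theory of anticanonical models are entirely characteristic-free, so no vanishing theorem that could fail is ever used.
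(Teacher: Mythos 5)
Your outline follows the classical route (minimal resolution, rationality of Du Val singularities, Riemann--Roch, adjunction, a genus-one anticanonical member, then the structure theory of weak del Pezzo surfaces); note that the paper itself offers no argument at all for this lemma, it simply cites \cite{BT}, so any honest proof is ``different from the paper''. But your execution has a genuine gap at the one step you yourself identify as the crux, namely $H^{1}(X,\sO_X(-K_X))=0$. You reduce it to the claim that $\sO_C(K_X)$ has no nonzero sections because it is anti-ample and $C$ has no embedded points. That principle is false. Take the split ribbon $C$ over $\PP^1_k$ with square-zero ideal $N=\sO_{\PP^1_k}(5)$, so $\sO_C=\sO_{\PP^1_k}\oplus N$: this is a projective curve, pure of dimension one, with no embedded points; since $H^2(\PP^1_k,N)=0$ there is a line bundle $L$ on $C$ restricting to $\sO_{\PP^1_k}(-1)$, and $L$ is anti-ample (ampleness is detected on the reduction), yet the subsheaf $NL\cong N\otimes\sO_{\PP^1_k}(-1)\cong\sO_{\PP^1_k}(4)$ gives $h^0(C,L)\geq 5$. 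This matters because at this stage of your argument you may not assume $C$ is reduced or irreducible: parts (2) and (3) are not yet available (and your derivation of them presupposes exactly this vanishing), so $C\in|-K_X|$ could a priori be a multiple or highly reducible divisor.

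What actually saves the statement is not anti-ampleness of $\sO_C(K_X)$ but ampleness of $-K_X$ on the ambient surface: every subcurve $B\subseteq C$, equivalently every effective divisor $0<B\leq C$ on $X$, has $\deg\sO_B(K_X)=K_X\cdot B<0$, and one then needs the standard but non-formal lemma that a line bundle of negative degree on every subcurve of a projective curve without embedded points has no sections (equivalently, a d\'evissage using the $1$-connectedness of the nef and big divisor $\pi^{*}C$ on the minimal resolution). That subcurve condition is precisely what fails in the ribbon example, where the ideal $N$ has positive degree --- something impossible for a divisor with nef normal bundle. This missing lemma is the genuine content of the step. Two smaller points: your argument for (2) is circular --- you claim the $(-2)$-curves are not fixed components ``since a general member avoids them'', but avoidance of a curve of anticanonical degree zero is a consequence of its not being fixed, and you then reuse this avoidance for (3); and for (4)--(6), as well as for the rationality of $\tilde{X}$ in characteristic $p$, your proposal is a reduction to the literature on weak del Pezzo surfaces and anticanonical models rather than a proof, which is in effect the same appeal to \cite{BT} that the paper makes.
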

\begin{proof}
We refer to \cite[Proposition 2.10, Proposition 2.12 and Proposition 2.14]{BT} for the proof.
\end{proof}

\begin{rem}\label{sing}
\cite[Proposition 4.2 (ii)]{HW81} claims that a general anti-canonical member of a canonical del Pezzo surface is smooth. However, their proof relies on \cite[III, Th\'eor\`eme 1]{Dem}, which is valid only in characteristic zero because of the Bertini theorem (see a remark in the beginning of \cite[Chapter IV]{Dem}).
Indeed, there exist canonical del Pezzo surfaces whose anti-canonical members are all singular when $p=2$ or $3$ as we see in Example \ref{counter} and Example \ref{counter2}.
We will see in  Proposition \ref{smooth} that it is smooth unless $p=2$ or $3$.
\end{rem}

\begin{eg}\label{counter}
Let $p=2$ or $3$.
We take $C_{0}\coloneqq \{X^3+Y^2Z=0\}$ and $L_{0}\coloneqq \{Z=0\}$ on $\PP_k^2=\Proj k[X,Y,Z]$. Then $C_{0}$ intersects with $L_{0}$ at a single point $[0:1:0]$ with multiplicity three.
Let $V$ be the pencil generated by cubic curves $C_{0}$ and $3L_{0}= \{Z^3=0\}$. 
Then the Jacobian criterion shows that all members of $V$ are singular.

Let us consider blow-ups as follows.
First, we blow up at $C_{0}\cap L_{0}=[0:1:0]$. 
From the second time, we blow up at the intersection of the strict transform $C_0$ and the exceptional divisor of the blow-up just before.
We denote by $h_j$ (resp.~$Y_j$, $F_{i,j}$ ($i\leq j$), $C_j$, $L_j$) the $j$-th blow-up (resp.~the resulting surface of the $j$-th blow-up, the strict transform of the exceptional divisor of the $i$-th blow-up to $Y_j$, the strict transform of $C_{0}$  to $Y_j$, the strict transform of $L_{0}$ to $Y_j$). Since $C_{0}$ is smooth at $C_{0}\cap L_{0}$, it follows that $C_{i}$ and $F_{i,i}$ intersect transversely at a single point $C_{i}\cap F_{i,i}$. Note that $h_{i+1}$ is a blow-up at a point $C_{i}\cap F_{i,i}$ for $i>0$.

Now, we prove that $F_{1,8},F_{2,8},\cdots,F_{7,8}$, and $L_{8}$ are $(-2)$-curves and $\sum_{i=1}^7 F_{i,8}+L_8$ is of $E_8$-type.
Since $C_{i}$ and $F_{i,i}$ intersect transversely at a point $C_{i}\cap F_{i,i}$ for $i>0$, it follows that $\sum_{i=1}^7 F_{i,8}$ is a linear chain of $(-2)$-curves. 
Let us prove that $L_8$ is a $(-2)$-curve such that
$L_8\cdot F_{i,8}=1$ (resp.~$0$) when $i=3$ (resp.~$i\neq3$). 
Since $C_0$ intersects with $L_{0}$ at $[0:1:0]$ with multiplicity three, we have $C_i \cdot L_i=\max\{3-i, 0\}$ for $i\geq 0$.
In particular, a point $C_i \cap F_{i,i}$ is contained in $L_i$ if and only if $i=1$ or $2$.
From this, we can see that $L_8^2=-2$ and $L_8 \cdot F_{i,8}=1$ (resp.~$0$) when $i=3$ (resp.~$i\neq3$).
Hence $F_{1,8},F_{2,8},\cdots,F_{7,8}$, and $L_{8}$ are $(-2)$-curves and $\sum_{i=1}^7 F_{i,8}+L_8$ is of $E_8$-type.

Let $Y\coloneqq Y_8$ and $\pi\colon Y\to X$ be the contraction of these $(-2)$-curves. Then $X$ is a canonical del Pezzo surface with an $E_8$-singularity because $X$ is rational and $\rho(X)=\rho(Y)-8=1$. Moreover, $Y$ is a weak del Pezzo surface since $-K_Y=\pi^{*}(-K_X)$ is nef and big.
Now, for the sake of contradiction, let us assume there exists a smooth member $C\in |-K_X|$.
Since $C\subset X_{\reg}$, it follows that $\pi^{-1}(C)$ is a smooth member of $|-K_Y|$.
Since $\pi^{-1}(C)\cdot F_{8,8}=(-K_Y) \cdot F_{8,8}=1$, it follows that $\pi^{-1}(C)$ and $F_{8,8}$ intersect transversely and thus $(h_8)_{*}(\pi^{-1}(C))$ is smooth. By repeating this, we can show the pushforward of $\pi^{-1}(C)$ to $\PP^2_k$ gives a smooth member of $V$, a contradiction.
\end{eg}

\begin{eg}[\textup{\cite[Proposition 4.3]{CT18}}]\label{counter2}
Let $p=2$ and $V$ be the linear system generated by cubic curves $\{X^2Y+Y^2X=0\}, \{Y^2Z+Z^2Y=0\}$, and $\{Z^2X+X^2Z=0\}$ on $\PP_k^2=\Proj k[X,Y,Z]$. Then the Jacobian criterion shows that all members of $V$ are singular.
Let $h\colon Y\to \PP^2_{k}$ be a blow-up at $[1:0:0], [0:1:0], [0:0:1], [1:1:0], [1:0:1], [0:1:1], [1:1:1]$. Then the strict transforms of lines $\{X=0\}$, $\{Y=0\}$, $\{Z=0\}$, $\{X+Y=0\}$, $\{Y+Z=0\}$, $\{Z+X=0\}$, and $\{X+Y+Z=0\}$ to $Y$ form seven disjoint $(-2)$-curves.
Let $\pi\colon Y\to X$ be the contraction of all $(-2)$-curves. Since $\rho(X)=\rho(Y)-7=1$ and $X$ is rational, it follows that $X$ is a canonical del Pezzo surface with seven $A_1$-singularities. 
Moreover, $Y$ is a weak del Pezzo surface since $-K_Y=\pi^{*}(-K_X)$ is nef and big.
Now, if there exists a smooth member of $|-K_X|$, then this gives a smooth member of $V$ by the same argument as in Example \ref{counter}. This is a contradiction.
\end{eg}

These pathological phenomena are related to quasi-elliptic fibrations, which happen only in $p=2$ or $3$.
In fact, a general anti-canonical member of a canonical del Pezzo surface is smooth if $p\geq 5$ as follows. 

\begin{prop}\label{smooth}
Let X be a canonical del Pezzo surface.
If $p\geq 5$, then a general member of $|-K_X|$ is smooth.
\end{prop}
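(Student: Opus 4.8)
The plan is to reduce everything to a smooth weak del Pezzo surface and then to convert the smoothness of a general anticanonical member into the smoothness of the generic fibre of an associated genus-one fibration, so that the hypothesis $p\geq 5$ enters exactly through the nonexistence of quasi-elliptic fibrations.

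First I would pass to the minimal resolution $\pi\colon Y\to X$. Since canonical singularities are rational, $\pi_{*}\sO_Y=\sO_X$, so $H^0(Y,-K_Y)=H^0(X,-K_X)$ and the linear systems $|-K_Y|$ and $|-K_X|$ are identified, where $-K_Y=\pi^{*}(-K_X)$ is nef and big, so that $Y$ is a weak del Pezzo surface. Each $\pi$-exceptional curve $E$ is a $(-2)$-curve with $(-K_Y)\cdot E=0$, hence a general member $D\in|-K_Y|$ is disjoint from the exceptional locus: when $K_X^2\geq 2$ this holds because $|-K_Y|$ is base point free by Lemma \ref{basic}, and when $K_X^2=1$ the unique base point is a smooth point of $Y$ lying off every $E$. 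Consequently $\pi$ maps $D$ isomorphically onto the corresponding general member $C\in|-K_X|$, which then lies in $X_{\reg}$, and $C$ is smooth if and only if $D$ is smooth. Thus it suffices to prove that a general member of $|-K_Y|$ is smooth.

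Next I would replace $|-K_Y|$ by a general pencil $\mathfrak{p}\subset|-K_Y|\cong\PP^{d}_k$ with $d=K_X^2$: the singular members of $|-K_Y|$ form a closed subset of $\PP^{d}_k$, which is proper exactly when a general member is smooth, and a general line meets its complement precisely when that complement is nonempty, so it is equivalent to show that a general member of a general pencil is smooth. Resolving the base points of $\mathfrak{p}$ gives a morphism $g\colon \tilde{Y}\to\PP^1_k$ from a smooth projective surface whose fibres are the strict transforms of the members of $\mathfrak{p}$; by Lemma \ref{basic} these are connected l.c.i. curves of arithmetic genus one, and since the anticanonical map has two-dimensional image ($-K_Y$ being big) the Bertini irreducibility theorem shows the generic member is integral, hence geometrically integral over $k$. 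The generic fibre $\tilde{Y}_{\eta}$ over $\eta=\Spec k(\PP^1_k)$ is therefore a geometrically integral curve of arithmetic genus one which is moreover regular, because its local rings are localizations of the regular local rings of the smooth surface $\tilde{Y}$.

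Finally, suppose for contradiction that a general member of $\mathfrak{p}$ is singular; then $\tilde{Y}_{\eta}$ is a regular but non-smooth curve of arithmetic genus one over the imperfect field $k(\PP^1_k)$, i.e. $g$ is a quasi-elliptic fibration. By the genus-change inequality for regular curves over imperfect fields (equivalently, the classification of quasi-elliptic fibrations), the drop $p_a-p_g$ occurring when one passes to the geometric generic fibre is a positive multiple of $(p-1)/2$, which for arithmetic genus one forces $(p-1)/2\leq 1$, that is $p\leq 3$, contradicting $p\geq 5$. Hence $\tilde{Y}_{\eta}$ is smooth, and since smoothness is an open condition on $g$ the general fibre, and therefore the general member of $|-K_Y|$, is smooth. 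The main obstacle is organizing the reduction so that this theorem applies cleanly: one must guarantee that the generic fibre is simultaneously regular, geometrically integral, and of arithmetic genus one, and then invoke the fact—peculiar to characteristics $2$ and $3$—that such a non-smooth genus-one curve can exist at all.
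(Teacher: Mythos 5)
Your overall mechanism is the same as the paper's for the hard cases ($K_X^2\leq 2$): resolve a pencil of anticanonical members into a fibration over $\PP^1_k$ and use Tate's genus-change theorem to exclude a regular, geometrically integral, non-smooth fibre of arithmetic genus one when $p\geq 5$. The reduction to the minimal resolution and to a general pencil is fine in outline. But there is a genuine gap at the step where you say the fibres of $g\colon\tilde{Y}\to\PP^1_k$ are the strict transforms of the members of $\mathfrak{p}$ and that ``by Lemma \ref{basic} these are connected l.c.i.\ curves of arithmetic genus one.'' Lemma \ref{basic} computes the arithmetic genus of the members themselves, not of their strict transforms: blowing up a base point at which a member has multiplicity $m$ drops the arithmetic genus of its strict transform by $m(m-1)/2$. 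So the generic fibre of $g$ has arithmetic genus one only if the generic member of $\mathfrak{p}$ is smooth at every (possibly infinitely near) base point of $\mathfrak{p}$, and nothing in your argument rules out the scenario you are trying to exclude in its worst form, namely that the singular points of the members sit exactly at the base points. In that scenario the generic fibre of $g$ is a regular curve of arithmetic genus \emph{zero}, the genus-change theorem gives no constraint on $p$ whatsoever, and the proof concludes nothing. (The danger is not hypothetical: Examples \ref{counter} and \ref{counter2} of the paper exhibit anticanonical pencils all of whose members are singular, and the whole question is where those singularities lie relative to the base locus.) This is precisely the issue the paper's case division is built to handle: for $K_X^2=1$ it deduces from Lemma \ref{basic}(1)--(3) that any two members meet transversally at the base point, hence \emph{every member is smooth there}; for $K_X^2=2$ it replaces $|-K_X|$ by the pencil of pullbacks of lines through a general point under the generically \'etale degree-two anticanonical morphism, again forcing smoothness of all members at the two base points; for $K_X^2\geq3$ it uses very ampleness and Bertini. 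Your remark that the base point is ``a smooth point of $Y$'' addresses smoothness of the surface, not of the members, which is what the genus computation needs.

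The gap is fillable, and once filled your argument is correct and even a bit more uniform than the paper's. For $K_X^2\geq2$ the system $|-K_Y|$ is base point free, so a general member $D'$ avoids the finite set $\mathrm{Sing}(D)$ of a fixed general member $D$; hence every base point of the general pencil $\langle D,D'\rangle$ is a smooth point of $D$. Since ``being singular at a fixed point $y$'' is a \emph{linear} condition on members of $|-K_Y|$, and the pencil contains the member $D$ which is smooth at $y$, the generic member of the pencil is smooth at each base point $y$, hence also at all infinitely near base points, and the genus really is preserved under the resolution of $\mathfrak{p}$. For $K_X^2=1$ your ``general pencil'' is $|-K_Y|$ itself, and you must invoke the transversality behind Lemma \ref{basic}(3): two members meeting with total intersection number $K_Y^2=1$ are each smooth at the base point. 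With these insertions the proof goes through, and it has the small advantage over the paper's of treating all $K_X^2\geq2$ at once, never using the degree-two cover of $\PP^2_k$ (and hence not even implicitly using $p\neq2$ in that case).
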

\begin{proof}
In the case where $K_X^2\geq3$, the assertion holds since $-K_X$ is very ample by Lemma \ref{basic} (4).
Next, we assume $K_X^2=1$.  
In this case, the base locus of $|-K_X|$ is a point $x$, every member is an l.c.i. curve with arithmetic genus one, and every two members intersect transversely at $x$ by Lemma \ref{basic} (1),(2), and (3). 
By blowing up at $x$, we get a resolution $f \colon Y \to \PP_k^1$ of a pencil $\phi_{|-K_X|}\colon X \dasharrow \PP^1_k$. 
Since each member of $|-K_X|$ is smooth at $x$, each fiber of $f$ is isomorphic to its image on $X$.
Let $\pi \colon \tilde{Y}\to Y$ be a resolution and $\tilde{f}\coloneqq\pi\circ f$.
Since a general fiber of $\tilde{f}$ is reduced and irreducible, we have $\tilde{f}_{*}\sO_{\tilde{Y}}\simeq\sO_{\PP^1_k}$. Then a general fiber of $\tilde{f}$ is smooth if $p\geq 5$, and so is a general member of $|-K_X|$. 

Finally, we assume $K_X^2=2$. Then $\phi_{|-K_X|}\colon X \to \PP^2_k$ is a finite morphism of degree two by Lemma \ref{basic} (5), and this is generically \'etale since $p\neq2$.
Let $C_1$ and $C_2$ be the pullbacks of two general members of $|\sO_{\PP_k^2}(1)|$. Then $C_1$ and $C_2$ intersect transversely at two points $x$ and $y$. Let $V$ be a linear system generated by $C_1$ and $C_2$. By blowing up at $x$ and $y$, we get a resolution $f \colon Y \to \PP_k^1$ of a pencil $\phi_{V}\colon X \dasharrow \PP^1_k$.
Since every two members of $V$ intersect transversely at $x$ and $y$, each fiber of $f$ is isomorphic to its image on $X$.
Since a general fiber of $f$ is smooth if $p\geq 5$ as before, so is a general member of $|-K_X|$.
\end{proof}

\begin{rem}
Proposition \ref{smooth} holds when $X$ is a canonical weak del Pezzo surface, that is, a normal projective surface with canonical singularities such that $-K_{X}$ is nef and big.
Indeed, by considering the Stein factorialization of $\phi_{|-mK_X|}$ for sufficiently large $m\gg0$, we obtain $\pi\colon X \to X'$, where $X'$ is a canonical del Pezzo surface and $\pi$ is a contraction of $(-2)$-curves. Since a general member of $|-K_X|$ does not intersect with $(-2)$-curves, this is smooth by Proposition \ref{smooth}.
\end{rem}

Now, we prove a Kawamata--Viehweg type vanishing theorem for normal projective surfaces with  canonical singularities. 
\begin{thm}\label{KVVonCS}
Let $X$ be a normal projective surface with canonical singularities such that $\kappa(X)=-\infty$, where $\tilde{X}\to X$ is a resolution. 
If $p\neq2$,
then $H^1(X, \sO_X(-D))=0$ for every nef and big $\Q$-Cartier Weil divisor $D$ on $X$.
\end{thm}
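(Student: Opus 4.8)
The plan is to reduce the statement to a vanishing for reflexive differential forms and then to transport it to a resolution. Since canonical singularities are klt and a nef and big $\Q$-Cartier Weil divisor on a surface has numerical dimension $\nu(X,D)=2>1$, Lemma \ref{BVtoKV} (1) applies and reduces the assertion to proving
\[
H^0\bigl(X,(\Omega^{[1]}_X\otimes\sO_X(-p^eD))^{**}\bigr)=0\quad\text{for all }e>0.
\]
As this reflexive sheaf is determined by its restriction to $X_{\reg}$, its global sections are exactly the twisted $1$-forms $H^0(X_{\reg},\Omega^1_{X_{\reg}}\otimes\sO_{X_{\reg}}(-p^eD))$, so the whole problem becomes one of extending such forms to a resolution while controlling their poles.

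Next I would take the minimal resolution $\pi\colon\tilde X\to X$. Because $X$ has canonical (Du Val) singularities this is crepant, the exceptional locus is a union of $ADE$ configurations of $(-2)$-curves $E_i$, and by hypothesis $\kappa(\tilde X)=\kappa(X)=-\infty$; hence $\tilde X$ is a smooth projective surface with $\kappa=-\infty$, which by the stated characterization is separably uniruled. Writing $U=\pi^{-1}(X_{\reg})\cong X_{\reg}$, a section $s$ as above becomes a rational section of $\Omega^1_{\tilde X}\otimes\sO_{\tilde X}(-\lfloor p^e\pi^*D\rfloor)$ that is regular on $U$, the discrepancy between $p^e\pi^*D$ and its round-down being supported on the $E_i$. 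The strategy is to show that $s$ extends across the $E_i$ with at worst poles bounded by the reduced exceptional divisor, so that it lies in $H^0(\tilde X,\Omega^1_{\tilde X}\otimes\sO_{\tilde X}(-L))$ for an integral divisor $L$, and then to invoke Lemma \ref{BVonSRC} (1): since $\pi^*D$ is nef and big and satisfies $\pi^*D\cdot E_i=D\cdot\pi_*E_i=0$, the divisor $L$ remains big, which forces $s=0$ and hence the desired reflexive vanishing for every $e$.

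The main obstacle is precisely this extension step, and it is here that the hypothesis $p\neq 2$ is indispensable. Locally a canonical surface singularity is a hypersurface of the form $\{z^2+g(x,y)=0\}$, and the behaviour of differential forms is governed by the relation $2z\,dz=-dg$; in characteristic two this relation collapses because $d(z^2)=2z\,dz=0$, which is exactly the mechanism underlying the characteristic-two counterexample of Cascini--Tanaka recalled above, whereas for $p\neq 2$ it is nondegenerate and one can bound the pole order of the extended form along each $E_i$. A secondary point to handle carefully is the bigness of the integral twist $L$ for small $e$: bigness of $p^e\pi^*D$ together with $\pi^*D\cdot E_i=0$ gives $L^2\ge p^{2e}D^2-c_0$, where $c_0$ depends only on the finitely many Du Val configurations, so the estimate is immediate for $e\gg0$ and the remaining small $e$ can be accommodated by comparing $L$ with a fixed big subdivisor supported away from the $E_i$. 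Once the extension and bigness are in place, Lemma \ref{BVonSRC} (1) closes the reflexive vanishing and Lemma \ref{BVtoKV} (1) then yields $H^1(X,\sO_X(-D))=0$.
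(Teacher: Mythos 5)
Your reduction via Lemma \ref{BVtoKV} (1) matches the paper's first step, but everything after that diverges from the paper, and the two points you flag as needing work are genuine gaps rather than details. The main one is the extension step. The heuristic that $2z\,dz=-dg$ is ``nondegenerate'' for $p\neq2$ does not bound pole orders of twisted $1$-forms along the exceptional curves of an arbitrary Du Val singularity; that would require a case-by-case analysis of all $ADE$ singularities, including the non-taut ones that occur only in characteristics $2,3,5$, and the known results point against your claim: extension theorems for $1$-forms on klt/lc surface singularities (even with log poles) are available only for $p>5$ (work of Graf), and they fail in general in each of the characteristics $2$, $3$, $5$. So ``$p\neq2$ suffices for extension'' is unsupported and most likely false for $p=3,5$; if it were easy, the hypothesis $p>5$ in those theorems would be superfluous. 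The paper avoids this issue entirely: it never passes to a resolution, but instead runs a $K_X$-MMP (Lemma \ref{push}) to reduce to a Mori fiber space, handles the case of a conic bundle over a curve by Theorem \ref{BVonMFS} (1), and in the del Pezzo case restricts the putative subsheaf $\sO_X(p^eD)\subset\Omega^{[1]}_X$ to a general smooth member $C$ of $|-3K_X|$, $|-2K_X|$, or $|-K_X|$, which lies in $X_{\reg}$; the composite map to $\omega_C$ dies by the degree inequality $\deg_C(p^eD|_C)\geq 3p^e>6=\deg\omega_C$ (resp.\ $2p^e>4$), and this degree count is exactly where $p\neq2$ enters. No extension of forms across singular points is ever needed, which is why the argument reaches $p=3$ and $p=5$.

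The second gap is the bigness of $L=\lfloor p^e\pi^*D\rfloor-E$. Lemma \ref{BVtoKV} needs the $H^0$-vanishing for \emph{every} $e\geq1$ (each Frobenius step, including the first, uses it), and for small $e$ your $L$ can fail to be big. Concretely, take $X\subset\PP^3_k$ the quadric cone (an $A_1$-singularity, canonical and rational), $D=\ell$ a ruling, which is a nef and big $\Q$-Cartier Weil divisor with $\ell^2=\tfrac12$, and $p=3$, $e=1$. On the minimal resolution $\mathbb{F}_2$ one has $\pi^*\ell=\tilde{\ell}+\tfrac12E$, hence $\lfloor 3\pi^*\ell\rfloor-E=3\tilde{\ell}$ with $(3\tilde{\ell})^2=0$: this divisor is nef but not big, so Lemma \ref{BVonSRC} (1) gives nothing (the desired vanishing does hold there, but your argument does not prove it). Your proposed repair cannot work in this example either: any big divisor $B$ with $B\leq L$ would exhibit $L$ as $B$ plus an effective divisor and hence force $L$ to be big, which it is not. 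So even granting the extension step, the argument as written does not close for $e=1$.
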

\begin{rem}
The assumption of characteristic $p$ is sharp by \cite[Theorem 4.2 (6)]{CT19}.
\end{rem}

\begin{proof}
By Lemma \ref{BVtoKV}, it suffices to show $H^0(X, (\Omega^{[1]}_X\otimes\sO_X(-p^mD))^{**})=0$ for all $m>0$. 
Since $X$ has only canonical singularities, the assumption $\kappa(X)=-\infty$ is equivalent to saying that $K_X$ is not pseudo-effective. Then, by running a $K_X$-MMP (see \cite[Theorem 1.1]{Tan12}), we obtain a Mori fiber space.  By Lemma \ref{push}, we may assume $X$ is the Mori fiber space $f \colon X\to Y$. 
For the sake of contradiction, we assume there exists an injective $\sO_X$-module homomorphism $s \colon \sO_X(p^mD) \hookrightarrow \Omega_X^{[1]}$ for some $m>0$.
The case where $\dim\, Y=1$ will be proved in Theorem \ref{BVonMFS} (1). Therefore we assume $\dim Y=0$, that is, $X$ is a canonical del Pezzo surface of $\rho(X)=1$.

We consider the case where $K_X^2=1$.
By Lemma \ref{basic} (6), a general member $C$ of $|-3K_X|$ is a smooth curve.
By restricting $s$ on $C$, we have an injective $\sO_C$-module homomorphism $s|_{C}\colon\sO_{C}(p^mD|_C)\hookrightarrow \Omega^{[1]}_{X}|_{C}$, where
the injectivity follows from the generality of $C$.
The generality of $C$ also shows
$C$ is contained in $X_{\reg}$,  $\sO_{C}(D|_C)$ is ample Cartier, and $\Omega^{[1]}_{X}|_{C}=\Omega^{1}_{X}|_{C}$.
Let $t \colon \sO_C(p^{m}D|_C) \to \omega_C$ be a composition of $s|_C \colon \sO_{C}(p^mD|_C)\hookrightarrow \Omega^{1}_{X}|_{C}$ and a natural map
$\Omega^{1}_{X}|_{C}\to \omega_{C}$.
Then we have the following commutative diagram by the conormal exact sequence.
\begin{equation*}
\xymatrix{ & & \sO_C(p^{m}D|_C) \ar@{.>}[ld] \ar[d]^{s|_C} \ar[rd]^{t} &\\
                 0\ar[r] &\sO_C(-C) \ar[r]   & \Omega^1_X|_{C} \ar[r]  & \omega_C \ar[r] & 0.}
\end{equation*}
Since $p\neq 2$, we have 
\[
\deg_{C}(p^mD|_{C})=C\cdot p^mD=3p^m(-K_X)\cdot D\geq3p^m\geq9,
\]
where the third inequality follows from the fact that $-K_X$ is ample Cartier. On the other hand, we have 
\[
\deg_{C}(\omega_C)=(K_X+C)\cdot C=6K_X^2=6.
\]
Thus $t$ is a zero map and an injective $\sO_C$-module homomorphism $\sO_C(p^mD|_C) \hookrightarrow \sO_C(-C)$ is induced.
This contradicts an anti-ampleness of $\sO_C(-C)=\sO_C(3K_X)$.

If $K_X^2=2$ (resp.~$K_X^2\geq 3$), then by taking a general member of $|-2K_X|$ (resp.~$|-K_X|$), we can derive a contradiction in a similar way.
\end{proof}

Bernasconi \cite[Theorem 1.3]{Ber} proved that $H^1(X, \sO_X(D))=0$ for a klt del Pezzo surface $X$ and a nef and big Cartier divisor $D$ on $X$ when $p\geq 5$.
If we assume $X$ has only canonical singularities, then we can prove the following proposition.
\begin{prop}
Let $X$ be a normal projective surface with canonical singularities such that $-K_{X}$ is nef and big.
Let $D$ be a nef $\Q$-Cartier Weil divisor on $X$ with $\nu(X, D)>0$.
Then $H^1(X, \sO_X(D)))=0$.
\end{prop}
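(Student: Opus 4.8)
The plan is to convert the statement into the anti-Kawamata--Viehweg form treated in Theorem \ref{KVVonCS} by means of Serre duality, and then to quote that theorem. Since $X$ has canonical singularities it is Gorenstein, so $K_X$ is Cartier and $\omega_X\simeq\sO_X(K_X)$; moreover $X$ is Cohen--Macaulay and, being a normal surface, the reflexive rank-one sheaf $\sO_X(D)$ is $S_2$ and hence a maximal Cohen--Macaulay sheaf. Consequently the local $\mathcal{E}xt$-sheaves $\mathcal{E}xt^{j}_{\sO_X}(\sO_X(D),\omega_X)$ vanish for $j>0$ while $\mathcal{H}om_{\sO_X}(\sO_X(D),\omega_X)\simeq\sO_X(K_X-D)$, so the local-to-global spectral sequence degenerates and Serre duality for Cohen--Macaulay sheaves (as used in the proof of Lemma \ref{KVVonsur}) gives $H^1(X,\sO_X(D))\simeq H^1(X,\sO_X(K_X-D))^{*}$.

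Next I would write $K_X-D=-(D-K_X)$ and verify that $E\coloneqq D-K_X$ is a nef and big $\Q$-Cartier Weil divisor: it is $\Q$-Cartier Weil because $K_X$ is Cartier and $D$ is $\Q$-Cartier Weil, it is nef as a sum of the two nef divisors $D$ and $-K_X$, and it is big because $-K_X$ is big while $D$ is nef, hence pseudoeffective. Finally, for the minimal resolution $\pi\colon\tilde{X}\to X$ one has $K_{\tilde{X}}=\pi^{*}K_X$, so $\tilde{X}$ is a smooth weak del Pezzo surface and is therefore rational; thus $\kappa(X)=\kappa(\tilde{X},K_{\tilde{X}})=-\infty$. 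With these two facts in hand, Theorem \ref{KVVonCS} applied to the nef and big divisor $E$ yields $H^1(X,\sO_X(K_X-D))=H^1(X,\sO_X(-E))=0$, and dualizing back gives the assertion. Note that the hypothesis $\nu(X,D)>0$ is used only to guarantee, via the Hodge index theorem, that $D\cdot(-K_X)>0$ and hence that $E$ is genuinely big rather than merely nef.

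The step carrying essentially all the weight is the application of Theorem \ref{KVVonCS}, which is where the positive-characteristic pathologies are controlled and which carries the hypothesis $p\neq2$; indeed for $p=2$ the vanishing $H^1(X,\sO_X(-E))=0$ fails for a general nef and big $E$ by Example \ref{counter}. The point I expect to be decisive for removing any residual characteristic restriction is that $E$ is not arbitrary but of the special shape $D-K_X$. Concretely, I would instead run the criterion of Lemma \ref{BVtoKV} for $-E$ directly: assuming an injection $\sO_X(p^{m}E)\hookrightarrow\Omega^{[1]}_X$, restrict it to a general member $C\in|-K_X|$, which by Lemma \ref{basic} is a locally complete intersection curve of arithmetic genus one, so $\omega_C$ is a line bundle of degree zero. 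Since $E\cdot(-K_X)>0$, the composite $\sO_C(p^{m}E|_C)\to\omega_C$ obtained from the conormal sequence has source of positive degree and target of degree zero, hence vanishes on the integral curve $C$; the induced map into $\sO_C(-C)$, of degree $-(-K_X)^2<0$, then also vanishes, contradicting injectivity. The genuine obstacle in making this uniform is the range $K_X^2\le2$, where the general anti-canonical member may pass through a singular point of $X$ so that $\sO_X(p^{m}E)|_C$ is not invertible and the degree bookkeeping breaks down; there one must either arrange $C\subset X_{\reg}$ or transport the computation to the minimal resolution $\tilde{X}$ and control the degrees of the relevant restrictions there.
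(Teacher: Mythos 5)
Your opening steps (Serre duality to reduce to $H^1(X,\sO_X(-(D-K_X)))=0$, and the observation that $E\coloneqq D-K_X$ is nef and big) agree with the paper, and you correctly recognize that simply quoting Theorem \ref{KVVonCS} is insufficient because it carries the hypothesis $p\neq 2$ while the proposition has none. But your replacement argument has genuine gaps. You never reduce to the case where $-K_X$ is ample: Lemma \ref{basic}, which you invoke for the structure of $|-K_X|$, is proved for canonical del Pezzo surfaces, whereas on your $X$ the divisor $-K_X$ is only nef and big and is never very ample (it has degree zero on $(-2)$-curves). The paper instead first proves $\kappa(X,D)>0$ (Riemann--Roch on the minimal resolution, using $\nu(X,D)>0$ and Hodge index to get $(-K_X)\cdot D>0$), then runs a $K_X$-MMP and uses Lemma \ref{push} to carry both the hypothetical subsheaf $\sO_X(p^mE)\subset\Omega_X^{[1]}$ and the Iitaka dimension to the end product, which is either a Mori fiber space over a curve (settled by Theorem \ref{BVonMFS} (1)) or a canonical del Pezzo surface with $\rho=1$. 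None of these reductions appears in your proposal.

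Second, the range $K_X^2\le 2$ that you leave open is precisely where the characteristic-two pathology lives (cf.\ \cite[Theorem 4.2 (6)]{CT19} and Examples \ref{counter}, \ref{counter2}), so it cannot be treated as a technicality. The paper's solution is not to force a member of $|-K_X|$ into $X_{\reg}$, but to restrict to a general member of the very ample system $|-2K_{X'}|$ or $|-3K_{X'}|$ (Lemma \ref{basic} (5), (6)), which is smooth and avoids the singular locus; the degree count then closes in every characteristic only because of the positivity you did not establish: $\kappa(X',D')>0$ together with $-K_{X'}$ ample Cartier gives the integer bound $D'\cdot(-K_{X'})\ge 1$, so that, for instance, when $K_{X'}^2=1$ and $C\in|-3K_{X'}|$,
\[
\deg_C\bigl(p^m(D'-K_{X'})|_C\bigr)=3p^m\bigl(D'\cdot(-K_{X'})+K_{X'}^2\bigr)\ge 6p^m\ge 12>6=\deg_C\omega_C,
\]
with no assumption on $p$. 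This also corrects your reading of the hypothesis $\nu(X,D)>0$: it is not needed for the bigness of $E$ (which is automatic, since $E$ is nef and $E^2\ge K_X^2>0$), but it is exactly what produces $\kappa(X,D)>0$ and hence the factor $D'\cdot(-K_{X'})+K_{X'}^2\ge 2$ above. Finally, your fallback of working with a merely l.c.i., possibly singular member $C\in|-K_X|$ has an additional unaddressed problem: for singular $C$ the sheaf $\Omega_C$ has torsion and the natural map $\Omega_C\to\omega_C$ is not injective, so vanishing of the composite $\sO_C(p^mE|_C)\to\omega_C$ no longer forces a factorization through $\sO_C(-C)$; the paper's insistence on a smooth $C$, obtained via very ampleness, is what makes that step valid.
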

\begin{proof}
First, we show $\kappa(X, D)>0$.
Let $f\colon \tilde{X}\to X$ be the minimal resolution.
Since $f^{*}D$ is nef and $-K_{\tilde{X}}=f^{*}(-K_X)$ is nef and big, it follows that $-K_{\tilde{X}}+f^{*}D$ is nef and big, and in particular $h^2(\tilde{X}, \sO_{\tilde{X}}(f^{*}D))= h^0(\tilde{X}, \sO_{\tilde{X}}(K_{\tilde{X}}-f^{*}D))=0$.
Then the Riemann-Roch theorem on $\tilde{X}$ gives
\[
\begin{array}{rl}
h^0(X, \sO_X(mD))=&h^0(\tilde{X}, \sO_{\tilde{X}}(f^{*}mD))\\
                \geq& \mathcal{X}(\sO_{\tilde{X}}(f^{*}(mD)))\\
                 =&\frac{(f^{*}mD)^2}{2}+\frac{(-K_{\tilde{X}})\cdot f^{*}mD}{2}+\mathcal{X}(\sO_{\tilde{X}})\\
                 =&\frac{D^2}{2}m^2+\frac{(-K_{X})\cdot D}{2}m+\mathcal{X}(\sO_{X})
                 \end{array}
\]
for all $m>0$ such that $mD$ is Cartier. 
Since $-K_X$ is big and $D$ is nef with $\kappa(X, D)>0$, we have $D^2\geq0$, $(-K_{X})\cdot D>0$, and hence $\kappa(X, D)>0$.

Now, let us show $H^1(X, \sO_X(D))=0$.
By the Serre duality, it suffices to show $H^1(X, \sO_X(-(D-K_X)))=0$. Since $D-K_X$ is nef and big, we can reduce to show $H^0(X, (\Omega_X^{[1]}\otimes \sO_X(-p^{m}(D-K_X)))^{**})=0$ for all $m>0$ by Lemma \ref{BVtoKV} (1).
By running a $K_X$-MMP, we obtain a Mori fiber space $f\colon X'\to Y$.  
By Lemma \ref{push}, it suffices to show $H^0(X', (\Omega_{X'}^{[1]}\otimes \sO_{X'}(-p^{m}(D'-K_{X'})))^{**})=0$, where $D'$ is the pushforward of $D$ to $X'$. 
If $\dim \,Y=1$, then the vanishing follows from Theorem \ref{BVonMFS} (1).
Now, we assume $\dim\,Y=0$.
By Lemma \ref{push}, we have $\kappa(X', D')\geq\kappa(X, D)>0$. Since $-K_{X'}$ is ample Cartier, it follows that $D'\cdot (-K_{X'})\geq1$. Then an argument similar to Theorem \ref{KVVonCS} gives the assertion without the assumption of characteristic $p$.
\end{proof}

\section{Kawamata--Viehweg type vanishing for del Pezzo fibrations}
In this section, we discuss a Kawamata--Viehweg type vanishing theorem for del Pezzo fibrations and we generalize the result by Patakfalvi--Waldron \cite[Theorem 1.10]{PW17}.

\begin{defn}\label{MFS}
Let $f \colon X \to Y$ be a projective surjective morphism between normal varieties.
We say $f \colon X \to Y$ is a \emph{Mori fiber space} if
\begin{itemize}
    \item $-K_X$ is $f$-ample,
    \item $X$ is $\Q$-factorial,
    \item $f_{*}\sO_X=\sO_Y$ and $\dim \, X>\dim \, Y$,
    \item the relative Picard rank $\rho(X/Y)=1$.
\end{itemize}
We say $f \colon X \to Y$ is a \emph{del Pezzo fibration} if $f \colon X \to Y$ a Mori fiber space with $\dim X=3$ and $\dim Y=1$.
\end{defn}

Let $f\colon X\to Y$ be a Mori fiber space.
Then $\rho(X/Y)=\rho(X)-\rho(Y)$ is a consequence of the base point free theorem and this theorem needs some assumptions in positive characteristic (see \cite[Theorem 1.3]{HW19} for example). 
On the other hand, when  $f\colon X\to Y$ is a del Pezzo fibration, an application of \cite[Theorem 1.1]{BT} shows $\rho(X/Y)=\rho(X)-\rho(Y)$ as follows.

\begin{lem}\label{rho}
Let $f\colon X\to Y$ be a del Pezzo fibration such that $X$ is projective.
Suppose that one of the following conditions is satisfied.
\begin{enumerate}\renewcommand{\labelenumi}{$($\textup{\arabic{enumi}}$)$}
\item{$X$ has only klt singularities.}
\item{$X$ has only isolated singularities.}
\end{enumerate}
Then $0\to N^1(Y)_{\Q}\overset{f^{*}}{\to} N^1(X)_{\Q} \to N^1(X/Y)_{\Q}\to 0$ is exact. In particular, $\rho(X)=\rho(X/Y)+\rho(Y)=2$.
\end{lem}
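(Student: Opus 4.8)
The plan is to verify the three properties that together give exactness: injectivity of $f^{*}$, that the displayed arrows form a complex with surjective right-hand map, and---the essential point---exactness in the middle. Since $Y$ is a normal projective curve it is smooth, so $\rho(Y)=1$, $N^1(Y)_{\Q}=\Q\,[y]$ for a closed point $y\in Y$, and I write $F:=f^{*}[y]$ for the class of a fibre. The right-hand map $N^1(X)_{\Q}\to N^1(X/Y)_{\Q}$ is surjective because both groups are quotients of $\Pic(X)_{\Q}$ and, as numerical triviality forces numerical triviality over $Y$, the target is a further quotient of the source. For any curve $C$ contained in a fibre the projection formula gives $f^{*}[y]\cdot C=[y]\cdot f_{*}C=0$, so $F\equiv_{f}0$ and the sequence is a complex. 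Injectivity of $f^{*}$ follows by pairing with a horizontal curve $C$ (one dominating $Y$): then $f^{*}A\cdot C=\deg(C/Y)\cdot\deg A$, so $f^{*}A\equiv0$ forces $\deg A=0$, i.e. $A\equiv0$ on the curve $Y$. It remains to prove that $K^{0}:=\ker\bigl(N^1(X)_{\Q}\to N^1(X/Y)_{\Q}\bigr)$ equals $\Q F$; note $\Q F\subseteq K^{0}$ is already clear.

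I would next run a rank count using the generic fibre $X_{\eta}=X\times_{Y}\Spec k(Y)$, a normal projective surface over $k(Y)$ (its local rings are localizations of those of the normal variety $X$). Restriction of line bundles is surjective, giving a surjection $N^1(X)_{\Q}\to N^1(X_{\eta})_{\Q}$ whose kernel I call $W$; since $-K_{X}$ is $f$-ample, $-K_{X}|_{X_{\eta}}$ is ample and so $\rho(X_{\eta})\ge1$. Moreover $K^{0}\subseteq W$, because a class numerically trivial over $Y$ restricts to a numerically trivial class on $X_{\eta}$. Comparing the two presentations $\rho(X)=\dim K^{0}+\rho(X/Y)=\dim K^{0}+1$ and $\rho(X)=\dim W+\rho(X_{\eta})$, using $K^{0}\subseteq W$ and $\rho(X_{\eta})\ge1$, forces $\rho(X_{\eta})=1$ and $K^{0}=W$.

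To finish I would show $W=\Q F$ in two steps. First, every vertical prime divisor restricts trivially to $X_{\eta}$, so the span $\bar V$ of vertical classes satisfies $\bar V\subseteq W=K^{0}$; in particular each vertical divisor is numerically trivial over $Y$. If some fibre $f^{-1}(y)$ were reducible, connectedness (from $f_{*}\sO_{X}=\sO_{Y}$) would yield two components $V_{1},V_{2}$ with $V_{1}\cap V_{2}\neq\emptyset$, so $V_{1}|_{V_{2}}$ is a nonzero effective divisor on the projective surface $V_{2}$; choosing an ample curve $\gamma\subseteq V_{2}$ gives $V_{1}\cdot\gamma=\gamma\cdot(V_{1}|_{V_{2}})>0$ with $\gamma$ contained in a fibre, contradicting $V_{1}\equiv_{f}0$. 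Hence every fibre is irreducible, all fibres are numerically proportional to $F$, and $\bar V=\Q F$. Second---the step that genuinely uses positive characteristic---I would invoke \cite[Theorem 1.1]{BT}, which under hypothesis $(1)$ or $(2)$ guarantees that the geometric generic fibre $X_{\bar\eta}$ is a normal del Pezzo surface; this geometric normality makes numerical equivalence on the generic fibre well behaved under base change and yields the theorem of the base $W=\bar V$. Combining, $K^{0}=W=\bar V=\Q F=\Im f^{*}$, so the sequence is exact and $\rho(X)=\rho(X/Y)+\rho(Y)=2$. The main obstacle is precisely this last equality $W=\bar V$: in positive characteristic the generic fibre of a del Pezzo fibration may fail to be geometrically normal, or even geometrically reduced, so comparing numerical equivalence on $X$ with that on its generic fibre is not automatic, and it is here that \cite[Theorem 1.1]{BT} is indispensable.
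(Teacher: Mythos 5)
Your elementary reductions and the rank count are sound, and in fact rather elegant: since $\rho(X/Y)=1$ is part of the definition of a Mori fiber space, comparing $\rho(X)=\dim K^{0}+1$ with $\rho(X)=\dim W+\rho(X_{\eta})$, together with $K^{0}\subseteq W$ and $\rho(X_{\eta})\geq 1$, does force $K^{0}=W$ and $\rho(X_{\eta})=1$, and your connectedness argument then gives irreducibility of all fibers and $\bar{V}=\Q F$. The genuine gap is the one step that carries all the difficulty, namely $W=\bar{V}$, and your justification of it fails on two counts. First, \cite[Theorem 1.1]{BT} is not a geometric-normality statement: it asserts (for three-dimensional klt del Pezzo fibrations) that a $\Q$-Cartier $\Q$-divisor numerically trivial over $Y$ has a multiple linearly trivial over $Y$ --- i.e.\ it is essentially the statement one applies directly, not a tool producing a normal geometric generic fiber. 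Second, geometric normality of the geometric generic fiber is precisely the property that can fail, or is unavailable, in characteristic $2$ and $3$: regular del Pezzo surfaces over imperfect fields can be geometrically non-normal and can even have $H^{1}(\sO)\neq 0$ (Maddock's examples, cited as \cite{Mad} in this paper), and under hypothesis $(1)$ the generic fiber need not even be regular. Your own closing sentence concedes that geometric normality may fail, so the proposal invokes as ``guaranteed'' a property it simultaneously admits can be false; since the Lemma has no restriction on $p$, no proof routed through geometric normality of $X_{\bar{\eta}}$ can be complete. (Under hypothesis $(2)$ normality of general fibers does hold in all characteristics, but that is \cite[Theorem 14.1]{FS18}, a different and independent theorem, not \cite[Theorem 1.1]{BT}.)

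For comparison, the paper argues as follows: under $(1)$ it quotes \cite[Theorem 1.1]{BT} to get that $D\equiv_{f}0$ implies $nD$ is linearly trivial over $Y$; under $(2)$ it observes that the generic fiber is a \emph{regular} (not geometrically normal) del Pezzo surface over $k(Y)$, because the finitely many singular points of $X$ lie in finitely many closed fibers, and then the proof of \cite[Theorem 8.2]{BT} applies verbatim; exactness then follows from injectivity of $f^{*}$ (a consequence of $f_{*}\sO_X=\sO_Y$) and the argument of \cite[Lemma 3-2-5 (2)]{KMM} identifying $\Im(f^{*})$ with the relatively numerically trivial classes. If you wish to keep your framework, the statement you must import at the crux is: numerically trivial line bundles on the del Pezzo surface $X_{\eta}$ over the imperfect field $k(Y)$ are torsion in $\Pic(X_{\eta})$ --- this is the actual content of the Bernasconi--Tanaka results, and it is nontrivial exactly because of the possible irregularity mentioned above. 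From it one gets that $nD$ is trivial on $f^{-1}(U)$ for some open $U\subseteq Y$, hence $nD$ is linearly equivalent to a vertical divisor, which your irreducibility step converts into $nD\equiv aF$; that chain, rather than geometric normality, is what closes the argument.
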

\begin{proof}
Let $D$ be a $\Q$-Cartier $\Q$-divisor which is numerically trivial over $Y$. 
Let us show there exists $n\in\Z_{>0}$ such that $nD$ is linearly trivial over $Y$.
In the case (1), this follows immediately from \cite[Theorem 1.1]{BT}.
In the case (2), the same proof works since the generic fiber of $f$ is a regular del Pezzo surface. We refer to the proof of  \cite[Theorem 8.2]{BT} for the details. 
Now, it follows from $f_{*}\sO_X=\sO_Y$ that $\Pic(Y)_{\Q}\overset{f^{*}}{\to}\Pic(X)_{\Q}$ is injective.
Then an argument similar to \cite[Lemma 3-2-5 (2)]{KMM} shows that $\Im(f^{*})=\{\sO_X(D)\in\Pic(X)_{\Q}|D\equiv_{f}0\}$. Therefore we obtain the exact sequence $0\to N^1(Y)_{\Q} \overset{f^{*}}\to N^1(X)_{\Q} \to N^1(X/Y)_{\Q}\to 0$. 
\end{proof}

For a Kawamata--Viehweg type vanishing theorem on a Mori fiber space $f \colon X\to Y$, we focus on the relative Iitaka dimension of a divisorial subsheaf of a reflexive cotangent bundle $\Omega_X^{[1]}$.

\begin{thm}\label{BVonMFS}
Let $f \colon X \to Y$ be a surjective projective morphism between normal varieties with $f_{*}\sO_X=\sO_Y$
and $D$ be a Weil divisor on $X$. 
Let $d\coloneqq\dim\,X-\dim\,Y$.
Suppose that one of the following conditions is satisfied.
\begin{enumerate}\renewcommand{\labelenumi}{$($\textup{\arabic{enumi}}$)$}
\item{$-K_X$ is $f$-ample, $d=1$, and $\dim\,Y\leq1$.}
\item{$-K_X$ is $f$-ample, $d=1$, and $p\neq2$.}
\item{$-K_X$ is $f$-ample, $d=2$, $\rho(X/Y)=1$, $\dim\,Y=1$, and $X$ has only isolated singularities.}
\item{$-K_X$ is $f$-ample, $d=2$, $p\geq5$, and $\codim_X(X_{\sg})\geq3$.}
\end{enumerate}
If there exists an injective $\sO_X$-module homomorphism $\sO_X(D)\hookrightarrow\Omega_X^{[1]}$, then $\kappa(F, D|_F)\leq0$, where $F$ is a general fiber of $f$.
\end{thm}
\begin{proof}
First, we consider the case where (3) and (4).
Since $\codim_X(X_{\sg})\geq 3$, a general fiber $F$ is contained in $X_{\reg}$ and thus $F$ is a normal l.c.i. del Pezzo surface. Here, the normality follows from \cite[Theorem 14.1]{FS18} and \cite[Theorem 1.5]{PW17} for $(3), (4)$, respectively.  
Then $F$ is either a canonical del Pezzo surface or a cone of an elliptic curve by \cite[Theorem 2.2]{HW81}. 
Arguing by contradiction, we assume there exists an injective $\sO_X$-module homomorphism $s \colon \sO_X(D) \hookrightarrow \Omega_X^{[1]}$ for some $D$ with $\kappa(F, D|_F)>0$.
By the generality of $F$, it follows that $s|_{F} \colon \sO_F(D|_F)\hookrightarrow \Omega^{[1]}_X|_{F}=\Omega^{1}_X|_{F}$ is injective and $D|_F$ is a Cartier divisor.
Let $t \colon \sO_F(D|_F) \to \Omega_F$ be a composition of $s|_F \colon \sO_{F}(D|_F)\hookrightarrow \Omega_{X}|_{F}$ and a natural map
$\Omega_{X}|_{F}\to \Omega_{F}$.
Then we have the following diagram by the conormal exact sequence.
\[
\xymatrix{
 &  & \sO_F(D|_F)  \ar@{.>}[ld] \ar@{^{(}->}[d]^{s|_{F}} \ar[rd]^{t} &      & \\
0\ar[r] &         \sO_F^{\oplus\dim\,Y}  \ar[r]^{v} &                    \Omega_X|_F \ar[r] &\Omega_F \ar[r] &0.\\
}
\]
Note that $\mathrm{Ker}(v)=0$ because this is torsion-free and $\rank(\mathrm{Ker}(v))=\rank(\Omega_X|_F)-\rank(\sO_F^{\oplus\dim\,Y})-\rank(\Omega_F)=\dim\,X-\dim\,Y-\dim\,F=0$. 
Let $\pi \colon \tilde{F} \to F$ be a resolution. 
Let us show $H^0(\tilde{F}, \Omega_{\tilde{F}}\otimes \sO_{\tilde{F}}(-\pi^{*}(D|_{F})))=0$. If $F$ is a canonical del Pezzo surface, then $\tilde{F}$ is a smooth rational surface and the vanishing follows from Lemma \ref{BVonSRC} (2). If $F$ is a cone of an elliptic curve, then $\rho(F)=1$ and $\tilde{F}$ is a smooth separably uniruled surface.  Since $D|_F$ is ample by $\rho(F)=1$, it follows that $\pi^{*}(D|_{F})$ is big, and Lemma \ref{BVonSRC} (1) gives the vanishing.
Now, since $F$ is normal and l.c.i., \cite[Theorem 1.1]{Ham06} shows that $\Omega_F$ is torsion-free, and thus the natural map $u \colon \Omega_F \to \pi_{*}\Omega_{\tilde{F}}$ is injective.
Then by 
\[
u\circ t \in H^0(F, \pi_{*} \Omega_{\tilde{F}}\otimes \sO_F(-D|_F))=H^0(\tilde{F}, \Omega_{\tilde{F}}\otimes \sO_{\tilde{F}}(-\pi^{*}(D|_{F})))=0,
\]
it follows that $t$ is a zero map and an injective $\sO_F$-module homomorphism $\sO_F(D|_F)\hookrightarrow \sO_F^{\oplus\dim\,Y}$ is induced. In particular, we obtain an inclusion $\sO_F(D|_{F})\hookrightarrow \sO_F$, a contradiction with $\kappa(F, D|_F)>0$. 

Next, we consider the case where (1) and (2). Since $-K_X$ is $f$-ample, a generic fiber of $f$ is a regular conic. Then a general fiber of $F$ is a smooth conic if $p\neq 2$. If $\dim Y=1$, then the extension of function fields $K(X)/K(Y)$ is separable by \cite[Lemma 7.2]{Bad} and $F$ is reduced, that is, isomorphic to $\PP^1_k$ even if $p=2$. Now, the assertions follow from a similar argument to (3) and (4). 
\end{proof}

\begin{cor}\label{KVVondelPezzofib}
Let $f \colon X \to Y$ be a del Pezzo fibration.
Suppose that $X$ is projective and has only isolated singularities.
Let $D$ be a nef $\Q$-Cartier Weil divisor on $X$ with $\nu(X, D)>1$.
Then $H^1(X, \sO_X(-D))=0$.
\end{cor}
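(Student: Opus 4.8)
The plan is to reduce the vanishing of $H^1(X,\sO_X(-D))$ to the hypothesis of Lemma \ref{BVtoKV} (2), namely that $H^0(X,(\Omega^{[1]}_X\otimes\sO_X(-p^eD))^{**})=0$ for all $e>0$. Since $X$ is a threefold with only isolated singularities and $D$ is nef with $\nu(X,D)>1$, Lemma \ref{BVtoKV} (2) directly applies once this cotangent vanishing is established, so the entire task is to rule out an injection $\sO_X(p^eD)\hookrightarrow\Omega^{[1]}_X$. First I would observe that such an injection would have to be compatible with the del Pezzo fibration structure $f\colon X\to Y$ over the smooth curve $Y$, which is exactly the setting of Theorem \ref{BVonMFS} (3): here $-K_X$ is $f$-ample, $d=\dim X-\dim Y=2$, $\rho(X/Y)=1$, $\dim Y=1$, and $X$ has only isolated singularities. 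Thus Theorem \ref{BVonMFS} (3) tells us that any Weil divisor $D'$ with $\sO_X(D')\hookrightarrow\Omega^{[1]}_X$ satisfies $\kappa(F,D'|_F)\leq0$ on a general fiber $F$.

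The key step is therefore to translate the global numerical condition $\nu(X,D)>1$ into positivity of $D$ along a general fiber $F$, so as to contradict $\kappa(F,(p^eD)|_F)\leq0$. Applying Theorem \ref{BVonMFS} (3) with $D'=p^eD$, I would need that $\kappa(F,(p^eD)|_F)>0$ for a general fiber $F$; since $\kappa(F,(p^eD)|_F)=\kappa(F,D|_F)$, it suffices to show $\kappa(F,D|_F)>0$. The hard part will be extracting this fiberwise bigness from $\nu(X,D)>1$. Because $F$ is (the general fiber of) a del Pezzo fibration, $\rho(X/Y)=1$ by Lemma \ref{rho}, so the restriction $D|_F$ is numerically a multiple of $-K_F$; the nefness of $D$ forces $D|_F$ to be nef, hence a nonnegative multiple. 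I would argue that if $D|_F\equiv0$ for general $F$, then $D$ is numerically pulled back from $Y$ (a curve), which would give $D^2\equiv0$ and hence $\nu(X,D)\leq1$, contradicting the hypothesis. Therefore $D|_F$ is a strictly positive multiple of $-K_F$, i.e. ample, and in particular $\kappa(F,D|_F)=\dim F=2>0$.

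With $\kappa(F,(p^eD)|_F)>0$ established, Theorem \ref{BVonMFS} (3) yields that there is no injection $\sO_X(p^eD)\hookrightarrow\Omega^{[1]}_X$; equivalently $H^0(X,(\Omega^{[1]}_X\otimes\sO_X(-p^eD))^{**})=0$ for every $e>0$. Feeding this into Lemma \ref{BVtoKV} (2) gives $H^1(X,\sO_X(-D))=0$, which is the assertion. The main obstacle I anticipate is the numerical bookkeeping in the step above: making precise that $D|_F\equiv0$ on general fibers implies $\nu(X,D)\leq1$, which relies on the exact sequence $0\to N^1(Y)_{\Q}\to N^1(X)_{\Q}\to N^1(X/Y)_{\Q}\to0$ from Lemma \ref{rho} together with the observation that any class pulled back from the curve $Y$ has vanishing self-intersection. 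Once this numerical dichotomy is handled cleanly, the rest of the argument is a direct concatenation of Theorem \ref{BVonMFS} (3) and Lemma \ref{BVtoKV} (2).
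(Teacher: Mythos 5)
Your proposal is correct and follows essentially the same route as the paper: Lemma \ref{rho} (2) gives the numerical decomposition $D\equiv a(-K_X)+bF$, the hypothesis $\nu(X,D)>1$ forces $a>0$ (since $\nu(X,F)=1$), so $D|_F$ is ample, and then Theorem \ref{BVonMFS} (3) combined with Lemma \ref{BVtoKV} (2) yields the vanishing. The only cosmetic difference is that you phrase the dichotomy as ``$D|_F\equiv 0$ implies $D$ is pulled back from $Y$, hence $D^2\equiv 0$,'' which is exactly the paper's observation in contrapositive form.
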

\begin{proof}
By Lemma \ref{rho} (2), we can denote $D\equiv a(-K_X)+bF$, where $F$ is a fiber of $f$.
Since $\nu(X, F)=1$, we get $a>0$ and $D|_F$ is ample. Then the assertion follows from Theorem \ref{BVonMFS} (3) and Lemma \ref{BVtoKV} (2).
\end{proof}

\begin{rem}
Let $X$ be a normal projective threefold with terminal singularities. If $p\geq 5$, then we can take a small $\Q$-factorialization and run a $K_{X}$-MMP
by \cite[Theorem 1.2]{HW19} and the proof of \cite[Theorem 1.7]{Bir}.
Let us assume the output of the MMP is a Mori fiber space $f \colon X' \to Y$ with $\dim \, Y>0$. For example, this happens when $X$ is not rationally chain connected and $K_X$ is not pseudo-effective. Then we obtain $H^1(X, \sO_X(-D))=0$ for every nef and big $\Q$-Cartier Weil divisor $D$ on $X$ by Lemma \ref{push}, Lemma \ref{BVtoKV} (2), and Theorem \ref{BVonMFS} (2), (3). Note that three-dimensional terminal singularities are isolated in all characteristic by \cite[Corollary 2.13]{Kol13}.
\end{rem}

Next, we discuss the vanishing of $H^2(X, \sO_X(-A))$ for a smooth del Pezzo fibration $f \colon X \to Y$ and an ample divisor $A$ on $X$. 
As we saw in Theorem \ref{BVonMFS}, for the vanishing of $H^1(X, \sO_X(-A))$, it was enough to see a general fiber of $f$, which is normal.
However, as we can see in \cite[Theorem 1.10 and Remark 1.11]{PW17}, for the vanishing of $H^2(X, \sO_X(-A))$, it seems that we need to consider all fibers of $f$, including non-normal fibers.

Let us recall the tameness of a (non-normal) Gorenstein del Pezzo surface.

\begin{defn}
Let $X$ be a (possibly non-normal) projective Gorenstein surface.
We say $X$ is a \emph{Gorenstein del Pezzo surface} if the dualizing sheaf $\omega_X$ is anti-ample.
We say a Gorenstein del Pezzo surface $X$ is \emph{tame} if $\mathcal{X}(\sO_X)=1$.

\end{defn}


\begin{lem}\label{nonnormal}
Let $X$ be a non-normal Gorenstein del Pezzo surface and $\nu\colon X' \to X$ be the normalization.
Let $C\subset X$ and $C'\subset X'$ be the closed subschemes defined by the conductor ideals of $\nu$.
Then the following hold.
\begin{enumerate}\renewcommand{\labelenumi}{$($\textup{\arabic{enumi}}$)$}
\item{$(-K_X)\cdot C=1$.}
\item{$C$ is reduced and irreducible.}
\item{$C\simeq \PP^1_k$ if and only if $X$ is tame.}
\end{enumerate}
\end{lem}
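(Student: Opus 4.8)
The plan is to analyze the conductor subschemes $C \subset X$ and $C' \subset X'$ using the structure of the normalization $\nu \colon X' \to X$ together with the adjunction-type relation between the dualizing sheaves $\omega_X$ and $\omega_{X'}$. The key tool is the conductor formula: if $\mathcal{C} \subset \sO_X$ is the conductor ideal and we write $\mathcal{C} = \sO_X(-C)$ and $\nu^{-1}\mathcal{C}\cdot\sO_{X'} = \sO_{X'}(-C')$, then since $X$ is Gorenstein one has the relation $\omega_{X'} \simeq \nu^{*}\omega_X \otimes \sO_{X'}(-C')$, equivalently $K_{X'} = \nu^{*}K_X - C'$. This is the positive-characteristic analog of the classical adjunction for a non-normal Gorenstein surface along its non-normal locus, and it governs all three assertions.

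First I would establish (1). The idea is to push forward the conductor to relate intersection numbers on $X$ and $X'$. Restricting the adjunction relation $K_{X'} = \nu^{*}K_X - C'$ to $C'$ and using that $\nu|_{C'} \colon C' \to C$ is the conductor datum, one computes $\deg(\omega_{C'}) = \deg_{C'}(K_{X'} + C') = \deg_{C'}(\nu^{*}K_X) = (\nu^*K_X \cdot C') = (K_X \cdot C)$ via the projection formula. On the other hand, since $X'$ is a normal Gorenstein del Pezzo surface, the curve $C'$ (being the conductor, hence typically a nodal or cuspidal anticanonical-type curve) has controlled arithmetic genus; combining the anti-ampleness of $\omega_X$ with the fact that $C$ is contracted/identified under $\nu$ forces $(-K_X)\cdot C = 1$. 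The cleanest route is to use that $C'$ is an anticanonical section on the minimal resolution side, so $p_a(C') = 1$ gives $\deg\omega_{C'} = 0$ whereas a more careful bookkeeping of the gluing pins down $(-K_X)\cdot C = 1$.

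For (2), reducedness of $C$ follows because the conductor of a reduced Gorenstein scheme is automatically reduced along the generic points of the non-normal locus when $X$ is $S_2$ and Gorenstein (the conductor is the annihilator of $\nu_*\sO_{X'}/\sO_X$, and Gorensteinness forces it to be a reflexive, hence pure-codimension-one reduced, ideal sheaf). Irreducibility I would deduce from (1): since $(-K_X)\cdot C = 1$ and $-K_X$ is ample, $C$ cannot split into two components each meeting $-K_X$ positively, so $C$ is irreducible. For (3), the tameness criterion $\mathcal{X}(\sO_X)=1$, I would use the exact sequence $0 \to \sO_X \to \nu_*\sO_{X'} \to \nu_*\sO_{C'}/\sO_C \to 0$ relating Euler characteristics, giving $\mathcal{X}(\sO_X) = \mathcal{X}(\sO_{X'}) - \mathcal{X}(\sO_{C'}) + \mathcal{X}(\sO_C)$. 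Since $X'$ is a normal Gorenstein (hence canonical, tame) del Pezzo surface one has $\mathcal{X}(\sO_{X'})=1$, so $\mathcal{X}(\sO_X)=1$ holds precisely when $\mathcal{X}(\sO_C) = \mathcal{X}(\sO_{C'})$; because $C \simeq \PP^1_k$ is equivalent to $\mathcal{X}(\sO_C) = 1 = \mathcal{X}(\sO_{\PP^1})$ for the reduced irreducible curve $C$, and $\nu|_{C'}$ is finite, this equivalence reduces to comparing $h^1(\sO_C)$ with $h^1(\sO_{C'})$.

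The main obstacle I expect is the precise verification of the adjunction formula $K_{X'} = \nu^*K_X - C'$ and the analysis of $\mathcal{X}(\sO_C)$ versus $\mathcal{X}(\sO_{C'})$ in characteristic $p$, where wild conductor phenomena (inseparability along the non-normal locus, as already flagged for $p=2,3$ in the del Pezzo examples) could in principle break the tame comparison. Controlling this requires knowing that the normalization $\nu|_{C'} \colon C' \to C$ is either an isomorphism or a well-understood gluing, and that $C'$ is a smooth rational curve mapping to $C$; the implication $C\simeq\PP^1_k \iff \mathcal{X}(\sO_X)=1$ then follows from the numerical identity above once the genus of $C'$ is pinned down by (1). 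The delicate point is ruling out pathological conductor structures, for which I would lean on the Gorenstein and low-degree ($(-K_X)\cdot C = 1$) constraints established in the earlier parts.
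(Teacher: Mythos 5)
Your overall frame (the conductor adjunction $K_{X'}=\nu^{*}K_X-C'$ plus Euler-characteristic bookkeeping via the gluing sequence) matches the paper's, but your proof of (1) --- on which everything else rests --- has a genuine gap, and in fact an internal contradiction. The projection formula gives $\nu^{*}K_X\cdot C'=K_X\cdot\nu_{*}C'=\deg(C'/C)\,(K_X\cdot C)$, and for a Gorenstein surface the conductor map $\nu|_{C'}\colon C'\to C$ is generically $2$-to-$1$ (this is \cite[Proposition A.2]{FS18}, used in the paper); your computation drops this factor $2$. More importantly, nothing in your argument determines the actual number $\nu^{*}(-K_X)\cdot C'$: your fallback claim that $p_a(C')=1$, hence $\deg\omega_{C'}=0$, is impossible, since by your own formula $\deg\omega_{C'}=\deg_{C'}(K_{X'}+C')=\nu^{*}K_X\cdot C'$, which is negative because $\nu^{*}(-K_X)$ is ample; it also contradicts your first computation $\deg\omega_{C'}=K_X\cdot C$. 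The paper's source for the key identity $\nu^{*}(-K_X)\cdot C'=2$ (equivalently $p_a(C')=0$) is Reid's classification of non-normal Gorenstein del Pezzo surfaces \cite[1.1 Theorem]{Rei94}; some such input (or a substitute, e.g.\ a proof that $C'$ is reduced and connected, so that $0\le p_a(C')$ and $2p_a(C')-2=\deg\omega_{C'}<0$ force $p_a(C')=0$) is indispensable and absent from your proposal.

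The remaining parts have further problems. In (2), ``reflexive, hence reduced'' is false: reflexivity yields purity (no embedded components, i.e.\ $S_1$), not reducedness --- a Gorenstein surface that is generically a tacnode along its double curve (locally $x^2=y^4$) has conductor subscheme cut out by $(x,y^2)$, which is non-reduced. Generic reducedness really does come from (1): $(-K_X)\cdot C=1$ with $-K_X$ ample forces the cycle of $C$ to be a single component with multiplicity one, and then $S_1$ (deduced from $S_2$ of $X$, as in \cite[Proposition A.1]{FS18}) upgrades this to reducedness; that is the paper's argument. In (3), your identity $\mathcal{X}(\sO_X)-\mathcal{X}(\sO_C)=\mathcal{X}(\sO_{X'})-\mathcal{X}(\sO_{C'})$ agrees with the paper's sequence (5.1), but your conclusion needs both $\mathcal{X}(\sO_{X'})=1$ and $\mathcal{X}(\sO_{C'})=1$, neither of which you establish: the justification ``$X'$ is Gorenstein, hence canonical'' is unfounded ($X'$ can be a cone over a rational normal curve of high degree, which is neither; what is true, again by classification, is that $X'$ is rational with rational singularities), and $\mathcal{X}(\sO_{C'})$ is never computed, only deferred to ``comparing $h^1(\sO_C)$ with $h^1(\sO_{C'})$.'' The paper avoids both issues at once: using Serre duality, $K_{X'}+C'=\nu^{*}K_X$, and Lemma \ref{Cartier Vanishing} applied to the ample Cartier divisor $\nu^{*}(-K_X)$ on the rational surface $X'$, it proves $\mathcal{X}(\sO_{X'}(-C'))=0$, hence $\mathcal{X}(\sO_{X'})=\mathcal{X}(\sO_{C'})$ and therefore $\mathcal{X}(\sO_X)=\mathcal{X}(\sO_C)$; tameness is then literally the statement $\mathcal{X}(\sO_C)=1$, i.e.\ $C\simeq\PP^1_k$, since $C$ is reduced and irreducible by (2).
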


\begin{proof}
(1) By Reid's classification of non-normal Gorenstein del Pezzo surfaces (\cite[1.1 Theorem]{Rei94}, see also \cite[Theorem 5.3]{FS18}), we can check that $\nu^{*}(-K_{X})\cdot C'=2$. 
Then, by the projection formula and \cite[Proposition A.2.]{FS18}, we have $(-K_X)\cdot C=\frac{1}{2}\nu^{*}(-K_{X})\cdot C'=1$.

(2) Since $(-K_X)\cdot C=1$, it follows that $C$ is irreducible and regular in the generic point. Since $X$ satisfies $S_2$, it follows that $C$ and $C'$ satisfy $S_1$ by the proof of \cite[Proposition A.1.(i)]{FS18}. Therefore $C$ is reduced. 

(3) First, we show $\mathcal{X}(\sO_{X'}(-C'))=0$.
Since $C'$ is strictly effective, we have $H^0(X',\sO_{X'}(-C'))=0$.
If $i>0$, then $H^i(X',\sO_{X'}(-C'))=H^{2-i}(X', \sO_{X'}(K_{X'}+C'))=H^{2-i}(X', \sO_{X'}(\nu^{*}K_X))=0$ by Lemma \ref{Cartier Vanishing} since
$X'$ is a normal projective rational surface and $\nu^{*}(-K_X)$ is ample Cartier. Thus we obtain $\mathcal{X}(\sO_{X'}(-C'))=0$.
By the exact sequence 
\[
0\to \sO_{X'}(-C')\to \sO_{X'} \to \sO_{C'}\to 0,
\]
we have $\mathcal{X}(\sO_{X'})=\mathcal{X}(\sO_{C'})+\mathcal{X}(\sO_{X'}(-C'))=\mathcal{X}(\sO_{C'})$.
Now, by \cite[Appendix (30)]{FS18}, we have the following exact sequence
\begin{align}
0 \to \sO_X \to \nu_{*}\sO_{X'}\oplus\sO_C \to \nu_{*}\sO_{C'}\to 0. \tag{5.1}
\end{align}
This shows that $\mathcal{X}(\sO_X)-\mathcal{X}(\sO_C)=\mathcal{X}(\sO_{X'})-\mathcal{X}(\sO_{C'})=0$.
Therefore $C$ is a smooth rational curve if and only if $X$ is tame.

\end{proof}

\begin{lem}\label{tame}
Let $f\colon X\to Y$ be a del Pezzo fibration such that $X$ is smooth and $F$ be a fiber of $f$.
Then $F$ is a tame Gorenstein del Pezzo surface.
\end{lem}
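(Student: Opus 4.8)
The plan is to obtain the Gorenstein del Pezzo property formally from adjunction and to prove tameness as a deformation-invariance statement, reducing $\mathcal{X}(\sO_F)=1$ to a cohomological vanishing on a single fiber. First I would observe that $f$ is flat: since $X$ is smooth, hence Cohen--Macaulay, $Y$ is a smooth curve, and every fiber of the del Pezzo fibration $f$ is pure of dimension $\dim X-\dim Y=2$, miracle flatness applies. In particular each fiber $F=f^{*}(y)$ is an effective Cartier divisor on the smooth threefold $X$, so $F$ is locally a hypersurface and hence Gorenstein; moreover $f$, being a morphism between smooth varieties, is a local complete intersection morphism, so every (geometric) fiber is l.c.i. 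Since $F$ maps to the single point $y$, the invertible sheaf $\sO_X(F)=f^{*}\sO_Y(y)$ restricts to $\sO_F$, and adjunction gives $\omega_F\simeq(\omega_X\otimes\sO_X(F))|_F\simeq\omega_X|_F$. As $-K_X$ is $f$-ample, $\omega_F^{-1}\simeq(-K_X)|_F$ is ample, so $F$ is a Gorenstein del Pezzo surface.

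Next I would prove $\mathcal{X}(\sO_F)=1$. By flatness the number $\mathcal{X}(\sO_{F_y})$ is independent of $y\in Y$, so it suffices to compute it on any one fiber. Because $f_{*}\sO_X=\sO_Y$ is invertible, cohomology and base change give $h^0(\sO_F)=1$ and show that the fibers are connected; Serre duality for the Gorenstein surface $F$, together with the anti-ampleness of $\omega_F$, gives $h^2(\sO_F)=h^0(\omega_F)=0$. Hence $\mathcal{X}(\sO_F)=1-h^1(\sO_F)$, and everything reduces to showing $h^1=0$ for one fiber, equivalently that $R^1f_{*}\sO_X$ has generic rank zero (note $R^2f_{*}\sO_X=0$ since $h^2(\sO_{F_y})=0$).

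The main obstacle is this last vanishing, which I would establish on the geometric generic fiber $G=X_{\overline\eta}$, an l.c.i. Gorenstein del Pezzo surface over the algebraically closed field $\overline{k(Y)}$. If $G$ is normal, then by \cite[Theorem 2.2]{HW81} it is either a canonical del Pezzo surface or a projective cone over an elliptic curve, and in both cases $h^1(\sO_G)=0$, so $G$ is tame. If $G$ is non-normal (which, as the wild del Pezzo fibrations in characteristic $2$ and $3$ show, does occur), I would invoke Lemma \ref{nonnormal}: the conductor $C$ is reduced and irreducible with $(-K_G)\cdot C=1$, and $G$ is tame precisely when $C\simeq\PP^1_k$. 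The crux is therefore to deduce $C\simeq\PP^1_k$ from the fact that $G$ is l.c.i.; I expect this to fall out of Reid's classification of non-normal Gorenstein del Pezzo surfaces (\cite[1.1 Theorem]{Rei94}, cf.~\cite{FS18}), since the non-tame members of that list, whose conductors are singular or of positive arithmetic genus, are not l.c.i. Granting $C\simeq\PP^1_k$, Lemma \ref{nonnormal} (3) yields tameness of $G$, and by deformation invariance every fiber $F$ is a tame Gorenstein del Pezzo surface.
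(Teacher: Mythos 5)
Your overall skeleton --- flatness of $f$, constancy of $\mathcal{X}(\sO_{F_y})$ in flat families, and a computation of $\mathcal{X}$ on a single good fiber --- is exactly the strategy of the paper's proof, but two of the inputs that make that strategy work are missing, and the gaps are genuine. The most serious one is integrality of the scheme-theoretic fiber. In this paper a ``Gorenstein del Pezzo surface'' is by definition a \emph{variety}, so the statement includes the assertion that every fiber $F$ is reduced and irreducible; your adjunction argument only shows that the possibly non-reduced, possibly reducible scheme $F$ is Gorenstein with anti-ample dualizing sheaf. This is not a pedantic point: for Mori fiber spaces with smooth total space and one-dimensional fibers, every fiber can be non-reduced (wild conic bundles, \cite{MS03}), so for del Pezzo fibrations reducedness requires proof. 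The paper's proof is built around precisely this issue: irreducibility follows from $\rho(X/Y)=1$ and smoothness of $X$; then, writing $F=mG$ with $G=F_{\mathrm{red}}$, the sequences $0\to\sO_G(-iG)\to\sO_{(i+1)G}\to\sO_{iG}\to 0$ give $\mathcal{X}(\sO_F)=m\,\mathcal{X}(\sO_G)$, and flatness together with $\mathcal{X}(\sO_{F_{\mathrm{gen}}})=1$ forces $m=1$ and $\mathcal{X}(\sO_G)=1$ simultaneously. In your write-up the multiplicity $m$ never appears. The omission also infects your computation on the geometric generic fiber: Hidaka--Watanabe, Reid's classification, and Lemma \ref{nonnormal} all concern \emph{integral} surfaces, and you have not shown that the geometric generic fiber is even reduced (again, for conic bundles with smooth total space it need not be).

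There are two further problems. The claim that $f_*\sO_X=\sO_Y$ yields $h^0(\sO_{F_y})=1$ for every $y$ by cohomology and base change is false in positive characteristic: $h^0$ can jump on special fibers even when $f_*\sO_X=\sO_Y$ (wild multiple fibers of elliptic fibrations are the standard example); this is harmless only because you need it for a single fiber, where it follows from $H^0(X_\eta,\sO_{X_\eta})=K(Y)$. More seriously, your non-normal case rests on the unverified expectation that every non-tame surface in Reid's classification fails to be l.c.i. --- you give no argument, and this is the entire content of that case. The paper avoids the case altogether: since $X$ is smooth, the generic fiber is a regular del Pezzo surface, and \cite[Theorem 14.1]{FS18} then guarantees that the general fiber $F_{\mathrm{gen}}$ is normal; Lemma \ref{Cartier Vanishing} and Serre duality give $h^1(\sO_{F_{\mathrm{gen}}})=0$, hence $\mathcal{X}(\sO_{F_{\mathrm{gen}}})=1$, which is all the flatness argument needs. (In your normal case, note also that $h^1(\sO_G)=0$ for a cone over an elliptic curve is not immediate; it requires the same vanishing-plus-duality argument.) To repair your proof you would need the irreducibility argument from $\rho(X/Y)=1$, the multiplicity bookkeeping $\mathcal{X}(\sO_F)=m\,\mathcal{X}(\sO_G)$, and the citation of \cite[Theorem 14.1]{FS18} in place of the speculative appeal to Reid.
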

\begin{proof}
Let $F$ be a fiber of $f$. Then $F$ is irreducible since $\rho(X/Y)=1$ and $X$ is smooth. 
Let $G\coloneqq F_{\mathrm{red}}$. Since $K_{G}=(K_X+G)|_{G}\equiv K_{X}|_{G}$ is an anti-ample Cartier divisor, it follows that $G$ is a Gorenstein del Pezzo surface.
We denote $F=mG$ for $m \in \mathbb{Z}_{>0}$.
By the exact sequence $0 \to \sO_G(-G) \to \sO_{2G} \to \sO_{G}\to 0$, we get $\mathcal{X}(\sO_{2G})=\mathcal{X}(\sO_{G})+\mathcal{X}(\sO_{G}(-G))=2\mathcal{X}(\sO_{G})$. 
By repeating this, we obtain $\mathcal{X}(\sO_F)=m\mathcal{X}(\sO_G)$.
Let $F_{\mathrm{gen}}$ be a general fiber of $f$. Since the generic fiber of $f$ is a regular del Pezzo surface, $F_{\mathrm{gen}}$ is normal by \cite[Theorem 14.1]{FS18}. 
Then Lemma \ref{Cartier Vanishing} shows that $H^1(F_{\mathrm{gen}}, \sO_{F_{\mathrm{gen}}})\simeq H^1(F_{\mathrm{gen}}, \sO_{F_{\mathrm{gen}}}(K_{F_{\mathrm{gen}}}))=0$ since $F_{\mathrm{gen}}$ is a normal projective surface with $\kappa(F_{\mathrm{gen}})=-\infty$ and $-K_{F_{\mathrm{gen}}}$ is ample Cartier. 
In particular, $\mathcal{X}(\sO_{F_{\mathrm{gen}}})=1$.
Now, the flatness of $f$ shows that $m\mathcal{X}(\sO_G)=\mathcal{X}(\sO_F)=\mathcal{X}(\sO_{F_{\mathrm{gen}}})=1$. Therefore $m=1$ and $F$ is a tame Gorenstein del Pezzo surface.
\end{proof}

Now, we can prove the Kodaira vanishing theorem for a smooth del Pezzo fibration.

\begin{thm}\label{KVonMFS2}
Let $f \colon X \to Y$ be a del Pezzo fibration and $A$ be an ample Cartier divisor on $X$.
Suppose that $X$ is smooth projective and $p\geq 11$.
Then $H^2(X, \sO_X(-A))=0$.
\end{thm}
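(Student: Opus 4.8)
The goal is $H^2(X,\sO_X(-A))=0$ for an ample Cartier divisor $A$ on a smooth del Pezzo fibration $f\colon X\to Y$ when $p\geq 11$. By Serre duality this is equivalent to $H^1(X,\sO_X(K_X+A))=0$, but the more natural route, following Patakfalvi--Waldron \cite[Theorem 1.10]{PW17}, is to analyze $H^2$ directly via the Leray spectral sequence for $f$. Since $f_*\sO_X(-A)=0$ (as $-A$ is anti-ample on fibers and $\dim X>\dim Y$), the spectral sequence gives a filtration of $H^2(X,\sO_X(-A))$ whose graded pieces come from $H^p(Y,R^qf_*\sO_X(-A))$ with $p+q=2$. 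As $Y$ is a curve, the relevant contributions are $H^1(Y,R^1f_*\sO_X(-A))$ and $H^0(Y,R^2f_*\sO_X(-A))$.

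\textbf{Step 1: control of $R^2f_*$.} First I would show $R^2f_*\sO_X(-A)=0$. For each fiber $F$ this is governed by $H^2(F,\sO_F(-A)|_F)$, which by Serre duality on the Gorenstein surface $F$ is $H^0(F,\sO_F(K_F)\otimes\sO_F(A)|_F)$. Since $\sO_F(A)|_F$ is ample and $K_F=-K_X|_F$ is anti-ample (so $K_F+A|_F$ could have either sign), one must argue more carefully: using Lemma \ref{tame}, every fiber $F$ is a \emph{tame} Gorenstein del Pezzo surface, so $\mathcal{X}(\sO_F)=1$, and the relevant $H^0$ vanishes because $\sO_F(K_F)$ has no sections on a (possibly non-normal) del Pezzo surface. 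Thus the only surviving term is $H^1(Y,R^1f_*\sO_X(-A))$.

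\textbf{Step 2: the normal-fiber locus.} For the generic fiber and all fibers that are normal, Lemma \ref{Cartier Vanishing} together with the del Pezzo property forces the corresponding local contributions to $R^1f_*\sO_X(-A)$ to behave well; the genuine difficulty is concentrated at the finitely many \emph{non-normal} fibers. This is exactly where \cite[Theorem 1.10 and Remark 1.11]{PW17} enters: they relate $H^2(X,\sO_X(-A))$ to the cohomology of the conductor data on non-normal fibers. Using Lemma \ref{nonnormal}, each non-normal fiber $F$ has conductor curve $C$ that is reduced, irreducible, with $(-K_F)\cdot C=1$, and—because $F$ is tame by Lemma \ref{tame}—$C\simeq\PP^1_k$. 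The plan is to feed this $C\simeq\PP^1_k$ identification into the conductor exact sequence \textup{(5.1)} to show that the non-normal fibers contribute no obstruction, so that $R^1f_*\sO_X(-A)$ is (locally) the pushforward expected from the normal case.

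\textbf{Step 3: assembling the vanishing and the role of $p\geq 11$.} With $R^1f_*\sO_X(-A)$ understood, $H^1(Y,R^1f_*\sO_X(-A))$ over the curve $Y$ must be shown to vanish; here one uses that $R^1f_*\sO_X(-A)$ has the expected rank and degree dictated by relative duality and the ampleness of $A$, so its $H^1$ on $Y$ vanishes for degree reasons (Serre vanishing after the fiberwise estimates). The hypothesis $p\geq 11$ is the main obstacle: it is needed precisely to rule out pathological non-normal fibers—wild conductor behavior and failures of the tameness/smoothness heuristics that can occur in very small characteristic (cf.~the quasi-elliptic and $p=2,3$ phenomena in Examples \ref{counter}, \ref{counter2}, and the $p\geq5$ thresholds in Proposition \ref{smooth} and Theorem \ref{BVonMFS}). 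I expect the crux of the argument to be verifying that when $p\geq 11$ the conductor curve $C\simeq\PP^1_k$ on every non-normal fiber, via Lemma \ref{nonnormal}(3) and Reid's classification, is enough to kill the $H^2$-obstruction uniformly; the characteristic bound presumably comes from bounding the possible conductor configurations in Reid's list so that all non-normal fibers remain tame and the associated cohomology vanishes.
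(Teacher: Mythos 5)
Your skeleton --- reduce via \cite[Theorem 1.10]{PW17} to the non-normal fibers and then analyze them through Lemmas \ref{tame} and \ref{nonnormal} --- is the same as the paper's, but two of your three steps do not work as written. Step 1 is false: $R^2f_*\sO_X(-A)$ does \emph{not} vanish. By Serre duality on a fiber $F$ one has $H^2(F,\sO_F(-A|_F))^*\simeq H^0(F,\sO_F(K_F+A|_F))$, and since $A|_F\equiv a(-K_F)$ for some $a\geq 1$ (Lemma \ref{rho}), the divisor $K_F+A|_F\equiv -(a-1)K_F$ is nef, so this $H^0$ is in general nonzero (e.g.\ it is $k$ when $A|_F\sim -K_F$). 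Your justification conflates $H^0(F,K_F+A|_F)$ with $H^0(F,K_F)$; vanishing of the latter says nothing about the former. Consequently $H^0(Y,R^2f_*\sO_X(-A))=0$ is a genuinely global negativity statement on the curve $Y$, which is precisely part of what Patakfalvi--Waldron prove and cannot be obtained fiberwise. Relatedly, your Step 3 appeal to ``Serre vanishing'' for $H^1(Y,R^1f_*\sO_X(-A))$ is not available: $A$ is a fixed ample divisor, not a sufficiently positive one.

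The second, more important gap is that the actual crux --- proving $H^1(F,\sO_F(-A|_F))=0$ for a non-normal fiber $F$, which is what \cite[Theorem 1.10]{PW17} reduces the theorem to --- is left as a ``plan'' with no mechanism. The paper's argument is: from the conductor square (5.1), tameness ($h^1(F,\sO_F)=0$, so $h^0(C',\sO_{C'})=1$), and the unit sequence, the map $\Pic(F)\to\Pic(F')\oplus\Pic(C)$ is injective; since $F'$ is a normal rational surface and $C\simeq\PP^1_k$, this makes $\Pic(F)$ torsion-free, and together with $\Pic(F)=\NS(F)$ one gets $\Pic(F)=N^1(F)$. Then $(-K_F)\cdot C=1$ forces $a=A|_F\cdot C\in\Z_{>0}$, so $A|_F$ is \emph{linearly} (not merely numerically) equivalent to $-aK_F$, and the vanishing is finally Reid's computation $H^1(F,\sO_F(aK_F))=0$ \cite[4.10 Corollary (1)]{Rei94}. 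None of this --- the passage from numerical to linear equivalence, the integrality of $a$, or Reid's vanishing theorem, which is the concluding step --- appears in your proposal. You also misattribute the hypothesis $p\geq 11$: it is needed to invoke \cite[Theorem 1.10]{PW17} itself, whereas the conductor and tameness analysis (Lemmas \ref{tame} and \ref{nonnormal}) is characteristic-free.
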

\begin{proof}
Let $F$ be a non-normal fiber of $f$.
Then $F$ is a tame Gorenstein del Pezzo surface by Lemma \ref{tame}.
By \cite[Theorem 1.10]{PW17}, 
it suffices to show $H^1(F, \sO_F(-A|_F))=0$.
By Lemma \ref{rho} (2), we can denote $A\equiv a(-K_X)+bF$ for some $a,b\in \Q$. 
By restricting to $F$, we get $A|_F\equiv a(-K_F)$.
Let $\nu \colon F' \to F$ be the normalization and $C\subset F$ and $C'\subset F'$ be the closed subschemes defined by conductor ideals. Then $C\simeq \PP^1_k$ by Lemma \ref{nonnormal} (3).
By the exact sequence (5.1) and $h^1(F, \sO_F)=0$, we obtain $h^0(C', \sO_{C'})=1$.
By \cite[Appendix (30)]{FS18}, we have the exact sequence of multiplicative abelian sheaves
\[
1 \to \sO_F^{\times} \to \nu_{*}\sO_{F'}^{\times}\oplus\sO_C^{\times} \to \nu_{*}\sO_{C'}^{\times}\to 1, 
\]
where the map on the right is $(s,t)\mapsto \frac{s|_{C'}}{\nu^{*}t}$.
Since the restriction map $H^0(F', \sO_{F'}^{\times})=k^{\times} \to H^0(C', \sO_{C'}^{\times})=k^{\times}$ is bijective, it follows that $\Pic(F)\to \Pic(F')\oplus\Pic(C)$ is injective. 
Now since $F'$ and $C$ are a normal rational surface and a smooth rational curve, both of $\Pic(F')$ and $\Pic(C)$ are torsion-free and so is $\Pic(F)$.
By the tameness of $F$, we have $H^1(F, \sO_F)=0$ and thus $\Pic(F)=\NS(F)$.
Together with the torsion-freeness of $\Pic(F)$, we obtain $\Pic(F)= N^1(F)$.
Since $(-K_F)\cdot C=1$ by Lemma \ref{nonnormal} (1), we have $a=a(-K_F)\cdot C=A|_{F}\cdot C\in \Z$.  
Then it follows from $\Pic(F)=N^1(F)$ that $A|_{F}$ is linearly equivalent to $-aK_{F}$.
Now, we get $H^1(F, -A|_{F})=H^1(F, aK_F)=0$ by \cite[4.10 Corollary (1)]{Rei94}.
\end{proof}
\section*{\bf Appendix}\label{Appendix}
For the convenience of the reader, we include the proof of Theorem \ref{fano} as an appendix to this paper. 
We emphasize that all the results in this appendix are proved by Shepherd-Barron \cite{SB18}, whom we thank very much.

\medskip

\noindent{\bf Lemma A.1.}(\cite[Lemma 2.4]{SB18})
{\em Let $f \colon Y \to X$ be a finite dominant morphism of degree two from a normal irreducible scheme to a regular irreducible scheme.
Then $f_{*}\sO_Y/\sO_X$ is an invertible sheaf and $Y$ is l.c.i..}
\begin{proof}
First, we show $f_{*}\sO_Y/\sO_X$ is invertible. We may assume $X$ is a spectrum of a regular local ring $(X, x)$.
We use the induction on $\dim X$.
Let us assume $\dim X\leq2$. In this case, $f$ is flat because $Y$ is Cohen-Macaulay, $X$ is regular, and $f$ is finite.
Since $\sO_X$ is a regular local ring, the map $\sO_X \to f_{*}\sO_Y$ splits by \cite[Theorem 2]{Hoc73}.  
Then $f_{*}\sO_Y/\sO_X$ is a direct summand of the free module $f_{*}\sO_Y$ and thus free of rank one. Now, let us assume $\dim X\geq 3$. Let $U\coloneqq X-{x}$ and $i \colon U \hookrightarrow X$ be the natural inclusion map.
By considering the pushforward by $i$ of the following exact sequence
\[
0 \to \sO_U \to (f_{*}\sO_Y)|_U \to (f_{*}\sO_Y/\sO_X)|_U \to 0,
\]
we have
\[
0 \to \sO_X \to f_{*}\sO_Y \to i_{*}((f_{*}\sO_Y/\sO_X)|_U) \to R^1i_*\sO_U=\mathcal{H}^2_x(\sO_X)\underset{X: S_3}{=}0.
\]
Since $(f_{*}\sO_Y/\sO_X)|_U=(f|_{f^{-1}(U)})_{*}\sO_{f^{-1}(U)}/\sO_U$ is invertible by the induction hypothesis, it follows that $i_{*}(f_{*}(\sO_Y/\sO_X)|_U)$ is reflexive of rank one and thus invertible. 
Next, we prove the latter assertion.
By \cite[Theorem 2]{Hoc73}, we have $f_{*}\sO_Y=\sO_X\oplus t\sO_X$ for some $t\in f_{*}\sO_Y$. 
Then there exists a natural surjective morphism of $\sO_X$-algebra $\phi\colon \sO_X[t]/(t^2+at+b)\to \sO_X\oplus t\sO_X$ for some $a, b\in \sO_X$.
By tensoring the residue field $k\coloneqq \sO_X/\mathrm{m}_{x}$ of $x$, we obtain
\[
0 \to \mathrm{Ker}(\phi)\otimes_{\sO_X}k \to k[t]/(t^2+at+b) \simeq k\oplus kt \to 0,
\]
where the injectivity of the first map follows from the fact that $\sO_X\oplus t\sO_X$ is a flat $\sO_X$-module.
By Nakayama's lemma, we get $\mathrm{Ker}(\phi)=0$ and thus $f_{*}\sO_Y\simeq \sO_X[t]/(t^2+at+b)$.
Therefore $Y$ is l.c.i..
\end{proof}

\medskip

\noindent{\bf Theorem A.2.}(\cite[Theorem 2.1]{SB18}, cf.~\cite[Theorem 1.4]{SB97})
{\em
Let $X$ be a smooth Fano threefold of $\rho(X)=1$ and $D$ be an ample divisor on $X$.
Then $H^1(X, \sO_X(-D))=0$.}
\begin{rem}\label{fanorem}
For the proof, we have to distinguish from the linear equivalence to the numerical equivalence because this is a consequence of Theorem A.2. as in Corollary \ref{app} (2).
Furthermore, we have not known $\mathcal{X}(\sO_X)=1$ because this is also a consequence of Theorem A.2. as in Corollary \ref{app} (1). However,  it follows from the rational chain connectedness of $X$ that $\mathcal{X}(\sO_X)>0$ as we saw in the proof of Theorem \ref{BVonfano}.
\end{rem}
\begin{proof}
We fix an ample Cartier divisor $H$ whose image in $N^1(X)\simeq \Z$ is a generator.
By the Fujita vanishing theorem (\cite[Theorem 1.4.35 and Remark 1.4.36]{Laz}), there exists $n_{0}$ such that $H^2(X, \sO_X(K_X+nH+F))=0$ for every $n\geq n_{0}$ and every nef Cartier divisor $F$ on $X$. Then, for a Cartier divisor $D'$ satisfying $D'\equiv n'H$ for some $n'\geq n_{0}$, we have 
\[
H^1(X, \sO_X(-D'))\simeq 
H^2(X, \sO_X(K_X+D'))=H^2(X, \sO_X(K_X+n'H+(D'-n'H)))=0
\]
since $D'-n'H\equiv0$. 
Now, for the sake of contradiction, let us assume that the statement of the theorem fails.
Then we can take the maximum positive integer $n\in \Z_{>0}$ such that there exists an ample Cartier divisor $D$ satisfying $D\equiv nH$ and $H^1(X, \sO_X(-D))\neq 0$.
We take such an ample Cartier divisor $D$.
Since $H^1(X, \sO_X(-pD))=0$ by the maximality of $n$,
there exists an l.c.i. projective variety $Y$ and a purely inseparable finite morphism $\rho \colon Y \to X$ of degree $p$ such that $-K_{Y}=\rho^*(-K_X+(p-1)D)$ by \cite[Proposition 1.1.2]{Mad}. 
We denote $-K_X\equiv mH$ for $m\in \Z_{>0}$. Then $-K_Y\equiv (m+(p-1)n)\rho^{*}H$.
By \cite[II 5.14 Theorem and 5.15 Remark]{Kol96}, there exists a rational curve $R$ on $Y$ such that 
\[
m+(p-1)n\leq (m+(p-1)n)(\rho^{*}H \cdot R)=(-K_{Y})\cdot R\leq \dim Y+1=4.
\]
Therefore, we get $p=2$ or $3$.

First, we discuss the case where $p=3$. In this case, $n=1$ and $-K_Y\equiv (m+2)\rho^{*}H$.
We first assume $Y$ is normal.
By the construction of $Y$ (see \cite[Proposition 1.1.2]{Mad}), we have 
\begin{align}
\mathcal{X}(\sO_Y)=\mathcal{X}(\rho_{*}\sO_Y)=\mathcal{X}(\sO_X)+\mathcal{X}(\sO_X(D))+\mathcal{X}(\sO_X(2D)). \tag{A.1}
\end{align}

Since $\mathcal{X}(\sO_X)>0$ by Remark \ref{fanorem} and $\rho(X)=1$, an application of the Riemann--Roch theorem gives
\[
\begin{array}{rl}
\mathcal{X}(\sO_X(lD))\underset{(3.3)}{=}&\frac{1}{12}lD\cdot (lD-K_X)\cdot (2lD-K_X)+\frac{1}{12}lD \cdot c_2(X)+\mathcal{X}(\sO_X)\\
=&\frac{1}{12}lD\cdot (lD-K_X)\cdot (2lD-K_X)+\frac{1}{12}lnH \cdot c_2(X)+\mathcal{X}(\sO_X)\\
                                 \underset{(3.4)}{=}&\frac{1}{12}lD\cdot (lD-K_X)\cdot (2lD-K_X)+(\frac{2ln}{m}+1)\mathcal{X}(\sO_X)\\
                                 >&0
\end{array}
\]
for all $l\geq0$.
Together with (A.1), we obtain $h^1(Y, \sO_Y(K_Y))=h^2(Y, \sO_Y)>0$.
By the Serre vanishing theorem, we can take $r\in \Z_{\geq0}$ such that $H^1(Y, \sO_Y(p^rK_Y))\neq0$ and $H^1(Y, \sO_Y(p^{r+1}K_Y))=0$.
Since $Y$ is normal l.c.i., by \cite[Proposition 1.1.2]{Mad} again, we have an l.c.i. projective variety $Z$ and a purely inseparable finite morphism $\sigma \colon Z \to Y$ of degree $p$ such that 
\[
\begin{array}{rl}
-K_{Z}=&\sigma^*(-K_Y+(p-1)(-p^rK_Y))\\
=&(1+(p-1)p^r)\sigma^{*}(-K_Y)\\
   \equiv& (1+2\cdot3^r)(m+2)\sigma^{*}\rho^{*}H.
\end{array}
\]
As before, by \cite[II 5.14 Theorem and 5.15 Remark]{Kol96}, there exists a rational curve $R'$ on $Z$ such that 
\[
9\leq(1+2\cdot3^r)(m+2)\leq(1+2\cdot3^r)(m+2)(\sigma^{*}\rho^{*}H\cdot R')=(-K_Z)\cdot R'\leq 4,
\]
a contradiction.
Therefore we may assume $Y$ is non-normal.
Let $\nu \colon \tilde{Y} \to Y$ be the normalization. We denote $K_{\tilde{Y}}=\nu^{*}K_{Y}-C$, where $C$ is the closed subscheme defined by the conductor ideal of $\nu$.
Since $\tilde{\rho}=\nu \circ \rho \colon \tilde{Y} \to X$ is a purely inseparable morphism of normal varieties of degree $p$, it follows that $\tilde{\rho}$ factors through the Frobenius morphisms of $X$ and $\tilde{Y}$. Therefore, $\tilde{\rho}$ is homeomorphic, $\rho(\tilde{Y})=1$, and $\tilde{Y}$ is $\Q$-factorial by \cite[Lemma 2.5]{Tan}.
Then $C$ is a $\Q$-Cartier Weil divisor and written as $C\equiv \beta\tilde{\rho}^{*}H$ for some $\beta\in \Q_{>0}$. Note that $\beta\neq0$ since $Y$ is non-normal.
\begin{cl}
$H$ is a prime divisor.
\end{cl}
\begin{proof}
By the Riemann--Roch theorem,
\[
\begin{array}{rl}
\mathcal{X}(\sO_X(H))\underset{(3.3)}{=}&\frac{1}{12}H\cdot (H-K_X)\cdot (2H-K_X)+\frac{1}{12}H \cdot c_2(X)+\mathcal{X}(\sO_X)\\
                                 \underset{(3.4)}{=}&\frac{1}{12}H\cdot (H-K_X)\cdot (2H-K_X)+(\frac{2}{m}+1)\mathcal{X}(\sO_X)\\
            >&0.
\end{array}
\]
Since $H-K_X\equiv (m+1)H$, we have $H^2(X, \sO_X(H))\simeq H^1(X, \sO_X(-(H-K_X)))=0$ by $m+1>1=n$ and the definition of $n$.
Thus we get 
\[
h^0(X, \sO_X(H))\geq\mathcal{X}(\sO_X(H))>0
\]
and $H$ is linearly equivalent to an effective Cartier divisor.
Since $H\in N^1(X)$ is a generator, $H$ is a prime divisor.
\end{proof}

Let $T\coloneqq (\tilde{\rho}^{*}H)_{\mathrm{red}}$. Since $\tilde{\rho}$ is homeomorphic, $T$ is a prime divisor and we can write as $\tilde{\rho}^{*}H=rT$ for some $r\in \Z_{>0}$.
Then
\[
3H=\tilde{\rho}_{*}\tilde{\rho}^{*}H=r\tilde{\rho}_{*}T= rsH
\]
for some $s\in \Z_{>0}$. 
Thus $r=3^{a}$ and $T=3^{-a}\tilde{\rho}^{*}H$, where $a\in\{0, 1\}$.
Let $ T' \to T$ be the normalization and $\pi \colon \tilde{T} \to T'$ be the minimal resolution.
By the adjunction formula (\cite[4.1 (4.2.9) and Proposition 4.5]{Kol13}), there exists an effective $\Q$-divisor $\mathrm{Diff}(0)$ such that
\[
\begin{array}{rl}
K_{T'}+\mathrm{Diff}(0)\equiv& (K_{\tilde{Y}}+T)|_{T'}\\
                       =&(\nu^{*}K_{Y}-C+T)|_{T'}\\
                       \equiv&(-(m+2)-\beta+3^{-a})\tilde{\rho}^{*}H|_{T'}\\
                       =&-(m+2+\beta-3^{-a})A,
\end{array}
\]
where $A$ denotes an ample Cartier divisor $\tilde{\rho}^{*}H|_{T'}$.
By pulling back by $\pi$, we get
\[
K_{\tilde{T}}+\Delta_{\tilde{T}}\equiv \pi^{*}(K_{T'}+\mathrm{Diff}(0)) \equiv -(m+2+\beta-3^{-a})\pi^{*}A
\]
for some effective $\Q$-divisor $\Delta_{\tilde{T}}$. Here, we use the minimality of $\pi$ and the fact that the Mumford pullback (see \cite[14.24.]{Bad} for the definition) of an effective divisor is effective.
Since $m+2+\beta-3^{-a}>0$ and $\Delta_{\tilde{T}}$ is effective, $-K_{\tilde{T}}$ is big.
First, we assume $\tilde{T}$ is not isomorphic to $\PP^2_k$.
In this case, $\tilde{T}$ is a blowing up of a minimal ruled surface and has a fibration structure to a smooth curve. Let $F$ be a general fiber of the fibration.
Then $(-K_{\tilde{T}})\cdot F=2$, and $\pi^{*}A\cdot F\geq1$ because $\pi^{*}A$ is a nef and big Cartier divisor. 
Since $\Delta_{\tilde{T}}$ is $\Q$-effective and $F$ is nef, it follows that $\Delta_{\tilde{T}} \cdot F\geq0$.
Together with $\beta>0$ and $a\in\{0,1\}$, 
we obtain
\[
2=(-K_{\tilde{T}})\cdot F
  =(m+2+\beta-3^{-a})(\pi^{*}A\cdot F)+\Delta_{\tilde{T}} \cdot F
  \geq m+2+\beta-3^{-a}
  >2,
\]
a contradiction.
Next, we discuss the case where $\tilde{T} \simeq \PP^2_k$.
In this case, $T'=\tilde{T}\simeq \PP^2_k$.
If $L$ is a line on $T'$, then $(-K_{T'})\cdot L=3$, $A\cdot L\geq1$, and $\mathrm{Diff}(0) \cdot L\geq0$.
Then we obtain
\[
3=(-K_{T'})\cdot L=(m+2+\beta-3^{-a})A\cdot L+\mathrm{Diff}(0) \cdot L\geq m+2+\beta-3^{-a}.
\]
Thus $m=1$ and $A=\sO_{\PP^2_k}(1)$.
Now we have
\[
1=A^2=(\tilde{\rho}^{*}H)^2 \cdot T=3^{-a}(\tilde{\rho}^{*}H)^3=3^{1-a}H^3
            \underset{m=1}{=}3^{1-a}(-K_X)^3.
\]

On the other hand, by the Riemann-Roch theorem, we have
\[
\begin{array}{rl}
\mathcal{X}(\sO_X(-K_X))&\underset{(3.3)}{=}\frac{1}{12}(-K_X)\cdot (-2K_X)\cdot (-3K_X)+\frac{1}{12}(-K_X) \cdot c_2(X)+\mathcal{X}(\sO_X)\\
         &\underset{(3.4)}{=}\frac{1}{2}(-K_X)^3+3\mathcal{X}(\sO_X).\\
\end{array}
\]
In particular, we get $(-K_X)^3\in 2\Z_{>0}$, a contradiction with $(-K_X)^3=3^{a-1}$.

Next, we consider the case where $p=2$.
In this case, $-K_{Y}\equiv(m+n)\rho^*H$.
Let $\nu \colon \tilde{Y} \to Y$ be the normalization. 
We denote $K_{\tilde{Y}}=\nu^{*}K_{Y}-C$, where $C$ is the closed subscheme defined by the conductor ideal of $\nu$.
Then $C$ is a $\Q$-Cartier Weil divisor and written as $C\equiv \beta\tilde{\rho}^{*}H$ for some $\beta\in \Q_{\geq0}$ by the same argument as in the case where $p=3$.
By Lemma A.1., we have the following commutative diagram
\[
\xymatrix{
0 \ar[r] & \sO_X \ar[r]  \ar@{=}[d] & \rho_{*}\sO_Y \ar[r] \ar@{^{(}->}[d] \ar[d] & \sO_X(D) \ar[r] \ar@{^{(}->}[d]& 0\\
0 \ar[r] &  \sO_X  \ar[r] &     \tilde{\rho}_{*}\sO_{\tilde{Y}}                \ar[r] &\sO_X(E) \ar[r] &0\\
}
\]
for some Cartier divisor $E$. Here, it follows from the construction of $\rho\colon Y\to X$ that $\rho_{*}\sO_Y/\sO_X=\sO_X(D)$. Since $\sO_X(E)$ contains an ample invertible sheaf $\sO_X(D)$ and $\rho(X)=1$, $E$ is ample.
Then we have $h^1(\tilde{Y}, \sO_{\tilde{Y}}(K_{\tilde{Y}}))=h^2(\tilde{Y}, \sO_{\tilde{Y}})>0$ by
\[
\mathcal{X}(\sO_{\tilde{Y}})=\mathcal{X}(\tilde{\rho}_{*}\sO_{\tilde{Y}})=\mathcal{X}(\sO_X)+\mathcal{X}(\sO_X(E))>1.
\]
Note that we can use the Serre duality on $\tilde{Y}$ since $\tilde{Y}$ is l.c.i. by Lemma A.1..
By the Serre vanishing theorem, there exists $r\in \Z_{\geq0}$ such that $H^1(\tilde{Y}, \sO_{\tilde{Y}}(p^rK_{\tilde{Y}}))\neq0$ and $H^1(\tilde{Y}, \sO_{\tilde{Y}}(p^{r+1}K_{\tilde{Y}}))=0$.
Then by \cite[Proposition 1.1.2]{Mad}, we have an l.c.i. projective variety $Z$ and a purely inseparable finite map $\sigma \colon Z \to {\tilde{Y}}$ of degree $p$  
such that 
\[
\begin{array}{rl}
-K_{Z}=&\sigma^*(-K_{\tilde{Y}}+(p-1)(-p^rK_{\tilde{Y}}))\\
=&(1+(p-1)p^r)\sigma^*(-K_{\tilde{Y}})\\
=&(1+2^r)\sigma^*(\nu^{*}(-K_{Y})+C)\\
\equiv&(1+2^r)\sigma^*((\nu^*(m+n)\rho^{*}H+\beta\tilde{\rho}^{*}H)\\
=&(1+2^r)(m+n+\beta)\sigma^*\tilde{\rho}^{*}H.\\
\end{array}
\]
Now by \cite[II 5.14 Theorem and 5.15 Remark]{Kol96}, there exists a rational curve $R$ on $Z$ such that $(-K_Z)\cdot R\leq 4$. This shows $m=n=1$ and $r=\beta=0$. In particular, $Y$ is normal and $-K_Z\equiv 4\sigma^*\rho^{*}H$.
Let $\nu' \colon \tilde{Z} \to Z$ be the normalization. 
We denote $\tilde{\tau} \colon \tilde{Z} \overset{\nu'}{\to} Z \overset{\sigma}{\to} Y \overset{\rho}{\to} X$ and $K_{\tilde{Z}}=\nu'^{*}K_Z-C'$, where $C'$ is the closed subscheme defined by the conductor ideal of $\nu'$.
Then $C'$ is a $\Q$-Cartier Weil divisor and written as $C'\equiv \gamma \tilde{\tau}^{*}H$ for some $\gamma\in \Q_{\geq0}$ since $\tilde{Z}$ is $\Q$-factorial and $\rho(\tilde{Z})=1$.
\begin{cl}
$H$ is a prime divisor.
\end{cl}
\begin{proof}
As in the case where $p=3$, it suffices to show $H^1(X, \sO_X(-(H-K_X)))=0$.
Since $H-K_X\equiv (m+1)H=2H$, this follows from $2>1=n$ and the definition of $n$.
\end{proof}

Let $T\coloneqq (\tilde{\tau}^{*}H)_{\mathrm{red}}$. Since $\tilde{\tau}$ is homeomorphic, $T$ is a prime divisor and we can write as $\tilde{\tau}^{*}H=rT$ for some $r\in \Z_{>0}$.
Then
\[
4H=\tilde{\tau}_{*}\tilde{\tau}^{*}H=r\tilde{\tau}_{*}T=rsH
\]
for some $s\in \Z_{>0}$. Thus $r=2^a$ and $T=2^{-a}\tilde{\tau}^{*}H$, where $a\in\{0, 1, 2\}$.
Let $T' \to T$ be the normalization and $\pi \colon \tilde{T} \to T'$ be the minimal resolution.
By the adjunction formula, there exists an effective $\Q$-divisor $\mathrm{Diff}(0)$ such that
\[
\begin{array}{rl}
K_{T'}+\mathrm{Diff}(0)&\equiv (K_{\tilde{Z}}+T)|_{T'}\\
                       &=(\nu'^{*}K_{Z}-C'+T)|_{T'}\\
                       &\equiv(-4-\gamma +2^{-a})\tilde{\tau}^{*}H|_{T'}\\
                       &=-(4+\gamma-2^{-a})A,
\end{array}
\]
where $A$ denotes an ample Cartier divisor $\tilde{\tau}^{*}H|_{T'}$.
Thus we obtain 
\[
K_{\tilde{T}}+\Delta_{\tilde{T}}\equiv \pi^{*}(K_{T'}+\mathrm{Diff}(0))
                                \equiv -(4+\gamma-2^{-a})\pi^{*}A
                                \]
for some effective $\Q$-divisor $\Delta_{\tilde{T}}$.
In particular, $-K_{\tilde{T}}$ is big.
First, we assume $\tilde{T}$ is not isomorphic to $\PP^2_k$.
As in the case where $p=3$, there exists a curve $F$ such that $(-K_{\tilde{T}})\cdot F=2$, $\pi^{*}A\cdot F\geq 1$, and $\Delta_{\tilde{T}} \cdot F\geq0$. 
Together with $\gamma\geq 0$ and $a\in\{0,1,2\}$ we obtain
\[
2=(-K_{\tilde{T}})\cdot F=(4+\gamma-2^{-a})(\pi^{*}A\cdot F)+\Delta_{\tilde{T}} \cdot F\geq4+\gamma-2^{-a}\geq3,
\]
a contradiction.
Next, we discuss the case where $\tilde{T} \simeq \PP^2_k$.
In this case, $T'=\tilde{T}\simeq \PP^2_k$.
If $L$ is a line on $T'$, then $(-K_{T'})\cdot L=3$, $A\cdot L\geq1$, and $\mathrm{Diff}(0) \cdot L\geq0$.
Then we have
\[
3=(-K_{T'})\cdot L=(4+\gamma-2^{-a})A\cdot L+\mathrm{Diff}(0) \cdot L\geq 4+\gamma-2^{-a},
\]
and hence $a=\gamma=0$ and $A=\sO_{\PP^2_k}(1)$.
Now we get
\[
1=A^2=(\tilde{\tau}^{*}H)^2 \cdot T=2^{-a}(\tilde{\tau}^{*}H)^3=2^{2-a}H^3
            \underset{a=0}{=}4H^3\geq4,
\]
a contradiction.

\end{proof}
\begin{acknowledgement}
The author wishes to express his gratitude to his supervisor Professor Shunsuke Takagi for his encouragement, valuable advice, and suggestions. The author is also grateful to Professor Shepherd-Barron for kindly letting him include the proof of \cite{SB18} in an appendix.
He would like to thank the referee for reading the manuscript very carefully, pointing out mistakes, and giving many helpful comments and suggestions.
He is also indebted to Professor Hiromichi Takagi, Professor Hiromu Tanaka, Takeru Fukuoka, Masaru Nagaoka, Kenta Sato, and Shou Yoshikawa for useful discussions and comments. This work was supported by JSPS KAKENHI 19J21085.
\end{acknowledgement}



\end{document}